\newtheorem{theorem}{Theorem}
\newtheorem{lemma}[theorem]{Lemma}
\newtheorem{corollary}[theorem]{Corollary}
\newtheorem{conjecture}[theorem]{Conjecture}
\newtheoremstyle{problem-def}%
{}
{}
{}
{}
{\sc}
{.}
{.5em}
{\thmnote{#3}}
\theoremstyle{problem-def}
\newtheorem*{problem-def}{Problem}
\def\df#1{{\em #1\/}}
\def\A{\mathcal{A}}
\def\B{\mathcal{B}}
\def\C{\mathcal{C}}
\def\F{\mathcal{F}}
\def\NN{\mathbb{N}}
\newcounter{cases}
\newcounter{subcases}[cases]
\newenvironment{casesblock}{%
\setcounter{cases}{0}\setcounter{subcases}{0}}{}
\def\case#1{\stepcounter{cases}\medskip\noindent{\bf Case \arabic{cases}: }#1.\par}
\def\subcase#1{\stepcounter{subcases}\medskip\noindent{\bf Subcase \roman{subcases}: }#1.\par}
\newcommand\point[1]{\par\medskip\noindent{\rm #1}}
\def\ifempty#1#2#3{%
\def\ifemptytest{#1}%
\def\ifemptyempty{}%
\ifx\ifemptyempty\ifemptytest #2\else #3\fi%
}
\def\imp{\Rightarrow}
\def\sm{\setminus}
\def\gph#1{{\sc #1}}
\def\eg{\widehat{g}}
\def\trans{\tau}
\def\minor{\ast}
\def\hat{\widehat}
\def\lab{\lambda}
\def\Forb{\text{Forb}}
\def\mid{\colon\,}
\def\SSS{\mathbb{S}}
\def\sss{\subseteq}
\begin{document}
\title{Obstructions for two-vertex alternating embeddings of graphs in surfaces}
\author{%
  Bojan Mohar
  \thanks{Supported in part by an NSERC Discovery Grant (Canada),
    by the Canada Research Chair program, and by the
    Research Grant P1--0297 of ARRS (Slovenia).}~\thanks{On leave from:
    IMFM \& FMF, Department of Mathematics, University of Ljubljana, Ljubljana,
    Slovenia.}
  \qquad
  Petr \v{S}koda\\\\
  \normalsize
  Department of Mathematics, \\
  \normalsize
  Simon Fraser University,\\
  \normalsize
  8888 University Drive,\\ 
  \normalsize
  Burnaby, BC, Canada.}
\date{}
\maketitle
\begin{abstract}
A class of graphs that lies strictly between the classes of graphs of genus (at most) $k-1$ and $k$ is studied.
For a fixed orientable surface $\SSS_k$ of genus $k$, let $\A_{xy}^k$  be the minor-closed class of graphs with terminals $x$ and $y$ that either embed into $\SSS_{k-1}$ or
admit an embedding $\Pi$ into $\SSS_k$ such that there is a $\Pi$-face where $x$ and $y$ appear twice in the alternating order.
In this paper, the obstructions for the classes $\A_{xy}^k$ are studied.
In particular, the complete list of obstructions for $\A_{xy}^1$ is presented.
\end{abstract}

\section{Introduction}

For a simple graph $G$, let $g(G)$ be the \df{genus} of $G$, that is, the minimum $k$ such that $G$ embeds into the orientable surface $\SSS_k$.
Similarly $\eg(G)$ stands for the \df{Euler genus} of $G$. A \df{combinatorial embedding} $\Pi$ of $G$ is a pair $(\pi, \lambda)$ where $\pi$ assigns each vertex $v \in V(G)$
a cyclic permutation of edges adjacent to $v$ called the \df{local rotation} around $v$ and the function $\lambda: E(G) \to \{-1, 1\}$ describes the signature
of edges when $\Pi$ is non-orientable. A \df{$\Pi$-face} is a walk in $G$ around a face of $\Pi$ (for a formal definition see for example~\cite{mohar-book}).
Vertices $v_1, \ldots, v_k$ are \df{$\Pi$-cofacial} if there is a $\Pi$-face where the vertices $v_1, \ldots, v_k$ appear in some order.

For an edge $e$ of $G$, the two standard graph operations, \df{deletion of $e$}, $G - e$, and \df{contraction of $e$}, $G/e$, are called \df{minor operations}
and are denoted by $G \minor e$ when no distinction is neccessary. A graph $H$ is a \df{minor} of $G$ if $H$ is obtained from a subgraph of $G$ by a sequence
of minor operations.
A family of graphs $\C$ is \df{minor-closed} if, for each graph $G \in \C$, all minors of $G$ belong to $\C$.
A graph $G$ is a (\df{minimal}\/) \df{obstruction} for a family $\C$ if $G$ does not belong to $\C$ but for every edge $e$ of $G$,
both $G - e$ and $G / e$ belong to $\C$. The well-known result of Robertson and Seymour~\cite{robertson-2004} asserts that the list of obstructions
is finite for every minor-closed family of graphs.

For a fixed surface $\SSS_k$, the graphs that embed into $\SSS_k$ form a minor-closed family and it is of general interest to understand
the obstructions $\Forb(\SSS_k)$ for these families. Unfortunately, $\Forb(\SSS_1)$ already contains thousands of graphs and is not yet determined~\cite{gagarin-2009}.
We approach the problem by studying graphs in $\Forb(\SSS_k)$ of small connectivity (see~\cite{mohar-low}).

In this paper we study a phenomenon that arises when joining two graphs by two vertices.
Given graphs $G_1$ and $G_2$ such that $V(G_1) \cap V(G_2) = \{x, y\}$,
the union of $G_1$ and $G_2$, that is the graph $(V(G_1) \cup V(G_2), E(G_1) \cup E(G_2))$, is an \df{$xy$-sum} of $G_1$ and $G_2$ 
(or a \df{$2$-sum} if the vertices are not important).
Sometimes, we also call $G$ to be an $xy$-sum of $G_1$ and $G_2$ even if the edge $xy$ is an edge of $G_1$ or $G_2$ but is not present in $G$.
To determine the genus of the $xy$-sum of $G_1$ and $G_2$, it is neccessary to know if $G_1$ (and $G_2$) has a minimum
genus embedding $\Pi$ such that there is a $\Pi$-face in which $x$ and $y$ appear twice in the alternating order (see~\cite{decker-1981,decker-1985}).
For vertices $x, y \in V(G)$, we say that $G$ is \df{$xy$-alternating on $\SSS_k$} if $g(G) = k$ and $G$ has an embedding $\Pi$ of genus $k$
with a $\Pi$-face $W = v_1\ldots v_l$ and indices $i_1, \ldots, i_4$ such that $1 \le i_1 < i_2 < i_3 < i_4 \le l$, 
$v_{i_1} = v_{i_3} = x$, and $v_{i_2} = v_{i_4} = y$.

A graph $G$ is \df{$k$-connected} if $G$ has at least $k+1$ vertices and $G$ remains connected after deletion of any $k-1$ vertices.
A graph has \df{connectivity} $k$ if it is $k$-connected but not $(k+1)$-connected.

To determine minimal obstructions of connectivity 2, we need to know which graphs are minimal {\em not\/} $xy$-alternating (see~\cite{mohar-low}).
For $k \ge 1$, let $\A_{xy}^k$ be the class of graphs with \df{terminals} $x$ and $y$ 
that are either embeddable in $\SSS_{k-1}$ or are $xy$-alternating on $\SSS_k$.
When performing minor operations on \df{graphs with terminals}, we do not allow a contraction identifying two terminals to a single vertex.
Also, when contracting an edge joining a terminal and a non-terminal vertex, the new vertex is a terminal. Thus the number of terminals of a minor is the same as of the original graph.
A homomorphism of two graphs with terminals is an isomorphism if it is a graph isomorphism and (non-)terminals are mapped onto (non-)terminals.
In particular, automorphisms that switch the terminals are considered.
Under these restrictions, 
$\A_{xy}^k$ is a minor-closed family of graphs with two terminals. 
Let $\F_{xy}^k$ be the set of minimal obstructions for $\A_{xy}^k$, that is, a graph $G$ belongs to $\F_{xy}^k$ if $G \not\in \A_{xy}^k$ and, for each edge $e \in E(G)$
and each allowed minor operation $\minor$, $G \minor e \in \A_{xy}^k$.
It is shown in Sect.~\ref{sc-general} that $\F_{xy}^k$ is finite for each $k \ge 1$.
Note that each vertex of a graph in $\F_{xy}^k$ has degree at least 3 unless it is a terminal.

A \df{Kuratowski graph} is a graph isomorphic to $K_5$, the complete graph on five vertices, or to $K_{3,3}$, the complete bipartite graph on three and three vertices. 
For a fixed Kuratowski graph $K$, a \df{Kuratowski subgraph} in $G$ is a minimal subgraph of $G$ that contains $K$ as a minor.
A \df{K-graph} $L$ in $G$ is a subdivision of $K_4$ or $K_{2,3}$ that can be extended to a Kuratowski subgraph in $G$.
We are using extensively the following well-known theorem.

\begin{theorem}[Kuratowski~\cite{kuratowski-1930}]
\label{th-kuratowski}
  A graph is planar if and only if it does {\em not} contain a Kuratowski subgraph.
\end{theorem}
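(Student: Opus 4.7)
The proof has two directions, and the easy one goes first. The plan is to show that neither $K_5$ nor $K_{3,3}$ embeds in the plane by a direct Euler-formula count ($v-e+f=2$ combined with $2e\ge 3f$ for $K_5$ and $2e\ge 4f$ for the triangle-free $K_{3,3}$), and to observe that planarity is preserved under subdivisions (a subdivision of a non-planar graph is itself non-planar, since each subdivision vertex would need to sit on an arc between its two neighbors). Hence a graph containing a Kuratowski subgraph is non-planar.

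For the hard direction, I would argue by contradiction and take a counterexample $G$ minimizing $|V(G)|+|E(G)|$: that is, $G$ is non-planar, contains no Kuratowski subgraph, and every proper subgraph and every minor-reduction behaves correctly. The first structural step is to reduce to the 3-connected case. If $G$ has a vertex of degree at most $2$, it can be suppressed without affecting either planarity or the existence of a Kuratowski subgraph. If $G$ has a cut vertex or a 2-cut $\{x,y\}$, I would split $G$ into smaller pieces along the cut; by minimality each piece (possibly augmented with a virtual edge $xy$) is planar, and a standard argument combines planar embeddings of the pieces into a planar embedding of $G$, contradicting that $G$ is non-planar. This forces $G$ to be 3-connected.

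Now assume $G$ is 3-connected. Pick any edge $e=uv$; a classical lemma of Tutte/Thomassen asserts that some such $e$ can be chosen so that $G/e$ remains 3-connected (alternatively one does a more delicate edge choice). By minimality, $G/e$ is planar, so fix a planar embedding of $G/e$. In this embedding the vertex $w$ obtained by identifying $u$ and $v$ has its neighbors arranged in a cyclic order around a face-boundary walk of $G/e-w$ which forms a cycle $C$ (by 3-connectivity of $G/e$). Splitting $w$ back into $u$ and $v$, the neighbors of $u$ and the neighbors of $v$ lie on $C$. If they can be separated into two arcs of $C$, then $G$ itself is planar, a contradiction. Otherwise $u$ and $v$ have interleaved neighbors on $C$, and from this interleaving pattern one extracts either a $K_5$-subdivision (when $u,v$ share three common neighbors on $C$, together with the edge $uv$) or a $K_{3,3}$-subdivision (in the remaining interleaving configurations), using paths in $C$ to realize the branch vertices.

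The main obstacle is the case analysis in this final step: one must carefully enumerate how the cyclic orders of $N(u)$ and $N(v)$ on $C$ can fail to be separable, and in each case exhibit the Kuratowski subgraph explicitly. A secondary obstacle is the 3-connectivity reduction, where the delicate point is to glue planar embeddings of two sides of a 2-cut $\{x,y\}$ so that the face containing $xy$ on one side can be matched with the face containing $xy$ on the other; this requires using that non-planarity is preserved under 2-sums of graphs each containing the virtual edge in a common face, which in turn uses 3-connectivity or a further minimality argument on the split pieces.
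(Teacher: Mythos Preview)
The paper does not prove this theorem at all: it is stated as a classical result with a citation to Kuratowski's original paper and is used as a black box throughout. There is therefore nothing in the paper to compare your proposal against.

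Your sketch follows the standard Thomassen-style argument and is essentially correct. One point that deserves a sentence of care in the reduction step: when you split $G$ at a $2$-cut $\{x,y\}$ and add the virtual edge $xy$ to each side, you must argue that neither augmented side contains a Kuratowski subgraph. The reason is that the other side, being nontrivial, contains an $x$--$y$ path, so a Kuratowski subgraph using the virtual edge $xy$ could be converted into one in $G$ by replacing $xy$ with that path. You gesture at this but do not state it, and without it the minimality hypothesis does not immediately apply to the augmented pieces. With that detail supplied, the argument is the textbook one.
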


We also use the following classical theorem (see~\cite[Theorem~6.3.1]{mohar-book}).

\begin{theorem}
  \label{th-disk-ext}
  Let $G$ be a connected graph and $C$ a cycle in $G$. 
  Let $G'$ be a graph obtained from $G$ by adding a new vertex joined to all vertices of $C$.
  Then $G$ can be embedded in plane with $C$ as an outer cycle unless $G$ contains
  an obstruction of the following type:
  \begin{enumerate}[\rm(a)]
  \item 
    a pair of disjoint crossing paths,
  \item
    a tripod, or
  \item
    a Kuratowski subgraph contained in a 3-connected block of $G'$ distinct from 
    the 3-connected block of $G'$ containing $C$.
  \end{enumerate}
\end{theorem}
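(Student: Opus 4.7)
My plan is to transfer the question to a planarity question about $G'$. The key reduction is: $G$ embeds in the plane with $C$ bounding a face if and only if $G'$ is planar. One direction is immediate, by placing the apex $v$ of $G'\sm G$ in the outer face and drawing its edges radially to $V(C)$. For the other, in any planar embedding of $G'$ the vertex $v$ is incident to faces whose boundary walks jointly traverse all of $V(C)$, and a standard bridge or $3$-connectivity argument allows us to arrange that the rotation around $v$ coincides with the cyclic order of $C$, so that deleting $v$ leaves $C$ as a face boundary. Thus it suffices to show: $G'$ is non-planar if and only if $G$ contains an obstruction of type (a), (b), or (c).

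For the forward direction, (c) already supplies a Kuratowski subgraph of $G'$. For (a), a pair of disjoint paths $P_1$ from $a_1$ to $a_2$ and $P_2$ from $b_1$ to $b_2$ with $a_1,b_1,a_2,b_2$ interleaving on $C$, combined with $v$ and the four arcs of $C$ between consecutive endpoints, forms a $K_5$-subdivision on the branch set $\{v,a_1,b_1,a_2,b_2\}$. A tripod in (b) similarly produces a Kuratowski subdivision in $G'$, with $v$ and the apex of the tripod among the branch vertices. In each case $G'$ is non-planar.

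For the converse I would apply Theorem~\ref{th-kuratowski} to find a Kuratowski subgraph $H$ of $G'$ and split on whether $v\in V(H)$. If $v\notin V(H)$, then $H\ss G$, and since $H$ is a subdivision of a $3$-connected graph it lies inside a single $3$-connected block $B$ of $G'$; if $B$ is distinct from the block containing $C$, we obtain (c), and otherwise a bridge analysis of $C$ inside $B$ yields (a) or (b). If $v\in V(H)$, then $v$ is a branch vertex of $H$ and its three or four neighbours in $H$ are attachments on $C$. For $H$ a $K_5$-subdivision, $v$'s four feet appear in some cyclic order on $C$, and of the six remaining paths of $H$ among the other branch vertices two must interleave on $C$, producing a pair of crossing paths as in (a). For $H$ a $K_{3,3}$-subdivision, the three paths emanating from a branch vertex opposite $v$ pass through $G$ to three vertices of $C$ and form a tripod, yielding (b).

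The main obstacle is the subcase $v\notin V(H)$ in which $H$ shares the block of $C$: then the Kuratowski subgraph is interlocked with $C$ and $v$, and careful bookkeeping of the bridges of $C$ inside that block is required to decide whether the obstruction extracted is a crossing pair or a tripod. This is the only step where both obstructions (a) and (b) are genuinely needed, and where the disk topology of the target embedding enters the argument in a nontrivial way; elsewhere each obstruction can be traced almost directly to the Kuratowski subgraph of $G'$.
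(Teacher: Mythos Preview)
The paper does not supply a proof of this theorem. It is stated as a classical result, cited from \cite[Theorem~6.3.1]{mohar-book}, and is used only as a tool (e.g.\ in the proof of Lemma~\ref{lm-segment}). So there is no argument in the paper to compare your proposal against.

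Your outline follows the natural route---reduce to the planarity of the auxiliary graph $G'$ and then read off obstructions from a Kuratowski subgraph of $G'$---and this is indeed how the result is usually proved. A few places in your sketch would need real work before it could stand as a proof. First, the equivalence ``$G$ embeds with $C$ bounding a face $\iff$ $G'$ is planar'' is not quite as automatic in the $\Leftarrow$ direction as you suggest: from a planar embedding of $G'$, deleting $v$ gives a face whose boundary \emph{contains} $V(C)$, but you must still argue that $C$ itself bounds a face, which uses that the neighbours of $v$ form a cycle. Second, when $v\in V(H)$ you assert that $v$ is a branch vertex, but $v$ could in principle lie on a subdivided edge of $H$; you should either rule this out or handle it. Third, and as you acknowledge, the subcase $v\notin V(H)$ with $H$ in the block of $C$ is where the real content lies, and ``bridge analysis'' there is doing a lot of unspoken work; in the book proof this is exactly where one carefully extracts either crossing paths or a tripod from the Kuratowski subgraph by analysing how its branches meet $C$.
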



Let $G$ be a 2-connected graph. Each vertex of degree different from 2 is a \df{branch vertex}.
A \df{branch} of $G$ is a path in $G$ whose endvertices are branch vertices and such that each
intermediate vertex has degree 2.

Let $H$ be a subgraph of $G$. An \df{$H$-bridge} in $G$ is a subgraph of $G$ which is either an edge not in $H$
but with both ends in $H$, or a connected component of $G - V(H)$ together with all edges which have one end in this 
component and the other end in $H$. For a $H$-bridge $B$, the \df{interior} of $B$, $B^\circ$, is the set $E(B) \cup (V(B) \sm V(H))$ 
containing the edges of $B$ and the vertices inside $B$.
Thus, $G - B^\circ$ is the graph obtained from $G$ by deleting $B$.

Let $B$ be an $H$-bridge in $G$. The vertices in $V(B) \cap V(H)$ are called \df{attachments} of $B$.
The bridge $B$ is a \df{local bridge} if all attachments of $B$ lie on a single branch of $H$.

Let $C$ be a cycle of a fixed orientation and $u$ and  $v$ two vertices in $C$.
The \df{segment} $C[u, v]$ is the path $P$ in $C$ from $u$ to $v$ (in the given orientation of $C$).
Similarly, $C(u, v)$ denotes $P$ without the endvertices and any combination of brackets can be used
to indicate which endvertices are included in the path.
Let $P$ be a segment of $C$ and $B$ a $C$-bridge whose attachments are contained in $P$. 
The \df{support} of $B$ in $P$ is the smallest subsegment of $P$ that contains all attachments of $B$.

For a cycle $C$, two $C$-bridges \df{avoid} each other if there are vertices $u$ and $v$ such that
all attachments of one bridge lie on $C[u,v]$ and all attachments of the other bridge lie on $C[v,u]$.
Otherwise, they \df{overlap}. A $C$-bridge $B$ is \df{planar} if $C \cup B$ is planar.
Let $\B$ be a set of $C$-bridges. The \df{bridge-overlap graph} of $\B$ has vertex set $\B$ and two
bridges are adjacent if they overlap. We use the following well-known theorem.

\begin{theorem}
\label{th-bridge-overlap}
Let $G$ be a graph that consists of a cycle $C$ and a set $\B$ of planar $C$-bridges.
Then $G$ is planar if and only if the bridge-overlap graph of $\B$ is bipartite.
\end{theorem}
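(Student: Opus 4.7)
The plan is to prove both directions by using the Jordan curve structure of $C$ in a planar embedding.

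For the forward direction, assume $G$ has a planar embedding $\Pi$. View it on the sphere; the cycle $C$ becomes a Jordan curve that partitions $\SSS_0 \sm C$ into two open disks $D_1, D_2$. Because each $C$-bridge $B$ is connected and meets $C$ only at its attachments, the set $B^\circ$ is contained entirely in one of $D_1$ or $D_2$. This yields a partition $\B = \B_1 \cup \B_2$ where $\B_i$ consists of the bridges embedded inside $D_i$. I would then argue that bridges in the same class $\B_i$ must avoid each other: if $B, B' \in \B_i$ overlapped, there would exist attachments $a_1, a_2$ of $B$ and $b_1, b_2$ of $B'$ appearing in the cyclic order $a_1, b_1, a_2, b_2$ on $C$; choosing internal $B$- and $B'$-paths connecting their respective pairs would give two arcs in the closed disk $\bar D_i$ with interlocking endpoints on $\partial D_i = C$, forcing a crossing and contradicting planarity of $\Pi$. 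Hence $\B_1, \B_2$ witness a proper 2-coloring of the bridge-overlap graph.

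For the converse, suppose the bridge-overlap graph is bipartite and fix a bipartition $\B = \B_1 \sqcup \B_2$. I would show that for each $i$, the subgraph $G_i := C \cup \bigcup_{B \in \B_i} B$ admits a planar embedding with $C$ bounding a face; gluing the two resulting closed disks along $C$ then produces a planar embedding of $G$. To construct the embedding of $G_i$, I would induct on $|\B_i|$, using the fact that the bridges in $\B_i$ are pairwise avoiding. Pick any $B \in \B_i$. Since $B$ is planar, $C \cup B$ has a plane embedding with $C$ as the outer face, placing $B$ in the closed disk bounded by $C$. For every other $B' \in \B_i$, avoidance with $B$ means there exist vertices $u, v \in V(C)$ such that all attachments of $B$ lie in $C[u,v]$ while all attachments of $B'$ lie in $C[v,u]$ (or vice versa); one can pick such a $(u,v)$-pair uniformly so that, after placing $B$, each $B' \in \B_i \sm \{B\}$ is assigned to one of the two closed sub-disks cut off by a suitable $(u,v)$-arc inside the face of $C \cup B$ on the $B$-free side. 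Recursing on the two sub-instances (whose bridges remain planar and pairwise avoiding, and whose overlap graphs are still bipartite) yields the embedding of $G_i$ with $C$ on the outer face.

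The main obstacle is the inductive step in the backward direction: one has to verify that once a single bridge $B$ has been drawn inside $C$, every remaining bridge in $\B_i$ can be unambiguously assigned to one of the two regions cut off by $B$, and that bridges assigned to the same region still pairwise avoid each other when viewed on the new boundary cycle. The delicate cases are when attachments of different bridges coincide on $C$ (so the supports degenerate to a vertex), and when several bridges share the same pair of separating vertices $u, v$ with $B$. I would handle these by working with the canonical laminar structure of supports of pairwise-avoiding bridges on $C$, choosing $B$ to have an inclusion-minimal support so that the recursion strictly reduces $|\B_i|$ on each side, and by invoking Theorem~\ref{th-disk-ext} to place each bridge inside its assigned sub-disk with the corresponding boundary cycle on the outer face.
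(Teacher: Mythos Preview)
The paper does not prove Theorem~\ref{th-bridge-overlap}; it is quoted as a well-known classical fact and used as a black box, so there is no argument in the paper to compare yours against. Your outline is the standard two-sided argument.

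There is, however, a genuine gap in your forward direction. You assert that if $B,B'\in\B_i$ overlap then one can find attachments $a_1,a_2$ of $B$ and $b_1,b_2$ of $B'$ appearing on $C$ in the cyclic order $a_1,b_1,a_2,b_2$. Under the paper's definition (``overlap'' $=$ ``not avoid''), this is false: two bridges that are each attached at exactly the same three vertices of $C$ overlap, yet no interlocking quadruple of \emph{distinct} attachments exists, so your disjoint-crossing-paths argument does not apply. You must add the separate (easy) observation that two such ``$3$-equivalent'' tripods cannot both be drawn in the same disk.

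Your backward sketch is workable, but the geometry is misdescribed. After embedding a bridge $B$ with attachments $w_1,\ldots,w_k$ (in cyclic order) inside the disk bounded by $C$, that disk is cut into $k$ subfaces, one behind each arc $C[w_j,w_{j+1}]$. Every remaining bridge $B'\in\B_i$ avoids $B$, so its attachments lie in some $C[v',u']$ containing no $w_j$ in its interior; hence they lie in a single arc $C[w_j,w_{j+1}]$, and $B'$ is assigned to that subface. Thus there are up to $k$ sub-instances, not ``two sub-disks cut off by a $(u,v)$-arc on the $B$-free side''. With this correction the induction on $|\B_i|$ goes through for an arbitrary choice of $B$; neither the inclusion-minimal-support trick nor Theorem~\ref{th-disk-ext} is needed.
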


The paper is organized as follows. In Sec.~\ref{sc-general}, we study the classes $\F_{xy}^k$ in general.
The rest of the paper is focused on the class $\F_{xy}^1$. A basic classification of $\F_{xy}^1$ is shown
in Sec.~\ref{sc-basic} and the complete list of $\F_{xy}^1$ is provided in the subsequent chapters.
The paper is concluded in Sec.~\ref{sc-main} where the main theorem is proven.

\section{General properties}
\label{sc-general}

In this section we present some general results about graphs in $\F_{xy}^k$, where $k\ge 1$.
In the following, $G /xy$ is the underlying simple graph of the multigraph obtained by identifying vertices $x$ and $y$. 
Note that the edge $xy$ does {\em not} have to be present and, if $xy \in E(G)$, we delete $xy$ before identifying $x$ and $y$.
Let $v_{xy}$ be the vertex obtained after the identification.
For a graph $G$ with terminals $x$ and $y$, let $G^+$ denote the graph $G + xy$ if $xy \not\in E(G)$ and the graph $G$ otherwise.
We will use the  following lemma (see~\cite[Prop.~6.1.2.]{mohar-book}).

\begin{lemma}
  \label{lm-planar-patch}
  Let $G$ be an $xy$-sum of graphs $G_1$ and $G_2$. If $G_2^+$ is planar, then
  each embedding of $G_1^+$ into a surface can be extended to an embedding of $G$
  into the same surface.
\end{lemma}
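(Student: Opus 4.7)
My plan is a standard topological cut-and-paste: patch a plane embedding of $G_2^+$ into the given embedding of $G_1^+$, using the common edge $xy$ as the interface. The hypothesis of planarity of $G_2^+$ is used only once, at the very beginning, to turn $G_2^+$ into a disk embedding with $xy$ on the boundary.

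Concretely, I would first take a plane embedding of $G_2^+$ and choose the face on one side of $xy$ to be the outer face; removing its open interior gives an embedding $\Pi_2$ of $G_2^+$ in a closed disk $D_2$ whose boundary $\partial D_2$ contains the edge $xy$, with $x$ and $y$ at the two endpoints of that arc. Next, in the given embedding $\Pi_1$ of $G_1^+$ in a surface $\Sigma$, I would pick a face $F$ incident to $xy$ and take a regular neighborhood $N$ of $xy$ inside $\bar F$, that is, a closed disk $N \subset \Sigma$ whose boundary consists of the edge $xy$ together with an arc in $F$ joining $y$ to $x$, and whose interior meets $G_1^+$ only in the interior of $xy$.

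Then I would glue $D_2$ into $N$ by any homeomorphism $h \colon D_2 \to N$ that identifies the two arcs labelled $xy$ (so in particular $x \mapsto x$ and $y \mapsto y$). Since $h(\Pi_2)$ agrees with $\Pi_1$ on the boundary $\partial N$, replacing the interior of $N$ in $\Pi_1$ by $h(\Pi_2)$ yields an embedding of $G_1^+ \cup G_2^+ = G^+$ in $\Sigma$. If $xy \in E(G)$ this is already an embedding of $G$; otherwise one simply deletes the edge $xy$ from the resulting drawing. There is no real obstacle in the argument: the only nontrivial step is the construction of the disk embedding of $G_2^+$, which is immediate from planarity, while the surgery itself is routine and could equally well be phrased combinatorially by splicing rotation systems at $x$ and $y$.
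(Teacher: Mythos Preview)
Your argument is correct and follows essentially the same approach as the paper: use planarity of $G_2^+$ to obtain an embedding of $G_2$ in a disk with $x$ and $y$ on the boundary, then patch this disk into a face of the given embedding of $G_1^+$ incident with the edge $xy$. The paper's proof is simply a two-sentence version of what you wrote, without the explicit topological bookkeeping of regular neighborhoods and gluing homeomorphisms.
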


\begin{proof}
  Since $G_2^+$ is planar, there is a planar embedding of $G_2$ such that $x$ and $y$ are on the infinite face.
 A given embedding $\Pi$ of $G_1^+$ can be extended into the embedding of $G$ by embedding $G_2$
 into a $\Pi$-face incident with the edge $xy$.
\end{proof}

In the sequel, we shall use another graph $G^*$ obtained from a given graph $G$ with given terminals $x$ and $y$.
The graph $G^*$ is obtained as an $xy$-sum of $G$ and $K_5 - xy$ (the graph obtained from $K_5$ with two terminals $x$ and $y$ by deleting the edge $xy$).
We will use a characterization of $xy$-alternating graphs by Decker et al.~\cite{decker-1985}, that a graph $G$ with terminals $x$ and $y$ is $xy$-alternating if and only if $g(G^*) = g(G)$.
They also proved the following theorem:

\begin{theorem}[Decker, Glover, and Huneke~\cite{decker-1985}]
  \label{th-decker-ori}
  If $G$ is an $xy$-sum of graphs $G_1$ and $G_2$, then
  \begin{equation}
    g(G) = \min\{g(G_1^+) + g(G_2^+) - \epsilon(G_1)\epsilon(G_2), g(G_1) + g(G_2) + 1\} \label{eq-ori}
  \end{equation}
  where $\epsilon(G) = 1$ if $G^+$ is $xy$-alternating and $\epsilon(G) = 0$ otherwise.
\end{theorem}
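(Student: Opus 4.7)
The plan is to prove the two matching inequalities $g(G) \le \min\{g(G_1^+)+g(G_2^+)-\epsilon(G_1)\epsilon(G_2),\,g(G_1)+g(G_2)+1\}$ and the reverse. The upper bound is constructive, giving two different embeddings of $G$ (one for each term in the min), while the lower bound comes from a topological case analysis of an arbitrary optimum embedding of $G$ at the 2-cut $\{x,y\}$.

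For the upper bound, two constructions suffice. To achieve $g(G_1)+g(G_2)+1$, begin with optimum embeddings $\Pi_i$ of $G_i$ on $\SSS_{g(G_i)}$; form a connected sum of the two surfaces (giving a genus-preserving embedding of the disjoint union $G_1\sqcup G_2$ on $\SSS_{g(G_1)+g(G_2)}$) and then attach a single handle which jointly carries the identifications $x_1=x_2$ and $y_1=y_2$. To achieve $g(G_1^+)+g(G_2^+)-\epsilon(G_1)\epsilon(G_2)$, take optimum embeddings $\Pi_i^+$ of $G_i^+$; when $\epsilon(G_i)=1$ we may insist that $\Pi_i^+$ exhibits an $xy$-alternating face incident with the edge $xy$. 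Delete $xy$ from each embedding (merging the two faces incident with it) and glue the two surfaces along the resulting slit boundaries, identifying terminals. When both $\epsilon(G_i)=1$, the alternating faces let one perform the gluing directly on the ambient surfaces without adding a new handle, saving exactly one unit of genus over the generic case.

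For the lower bound, fix an optimum embedding $\Pi$ of $G$ on $\SSS_g$. At each terminal $v\in\{x,y\}$ the rotation $\Pi(v)$ partitions into $G_1$-edges and $G_2$-edges that are either \emph{separated} (forming two arcs of the local cyclic order) or \emph{interleaved}. If both $x$ and $y$ are separated, there is a simple closed curve $\gamma$ through $x$ and $y$ on $\SSS_g$ that locally separates the two sides of the cut. If $\gamma$ is separating on $\SSS_g$, cutting along $\gamma$ and capping the two boundary circles yields embeddings of $G_1^+$ and $G_2^+$ on surfaces with genera summing to $g$, so $g\ge g(G_1^+)+g(G_2^+)\ge g(G_1^+)+g(G_2^+)-\epsilon(G_1)\epsilon(G_2)$. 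If $\gamma$ is non-separating, cutting reduces the genus by one and leaves two boundary circles on a single surface; capping both gives disjoint embeddings of $G_1$ and $G_2$ on $\SSS_{g-1}$, whence $g\ge g(G_1)+g(G_2)+1$. If at least one terminal is interleaved, a topological surgery around that terminal (cut along a short arc separating the two edge-bundles in the rotation, then re-glue) converts $\Pi$ into a disjoint embedding of $G_1\sqcup G_2$ on $\SSS_{g-1}$, again yielding $g\ge g(G_1)+g(G_2)+1$.

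The main obstacle is the interleaved case of the lower bound: showing that the surgery at an interleaved 2-cut consumes exactly one handle of $\SSS_g$, so that the split embedding actually lives on $\SSS_{g-1}$ and not merely on $\SSS_{g}$ (the trivial upper bound) or $\SSS_{g-2}$ (which would make the conclusion fail). The underlying reason is that interleaving records a single ``twist'' between the two sides of the cut, and the surgery precisely unwinds it; verifying this sharply, either by explicit rotation-system bookkeeping at $x$ and $y$ or by a direct handle-decomposition argument, is the technical heart of the argument and what makes the $+1$ term in the formula tight.
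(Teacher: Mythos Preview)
The paper does not prove Theorem~\ref{th-decker-ori}; it is quoted from Decker, Glover, and Huneke~\cite{decker-1985} and used as a black box throughout. So there is no in-paper proof to compare your proposal against.

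That said, your outline follows the natural strategy (and, broadly, the original Decker--Glover--Huneke argument): establish the upper bound by two explicit constructions, and the lower bound by analysing how an optimum embedding of $G$ meets the $2$-cut $\{x,y\}$. As a roadmap this is correct, but two places are genuinely incomplete. In the upper bound with $\epsilon(G_1)=\epsilon(G_2)=1$, ``the alternating faces let one perform the gluing directly'' hides the actual construction: one must identify the two $xy$-alternating disks along their boundary pattern $x\ldots y\ldots x\ldots y$ and verify via an Euler-characteristic count that the resulting closed orientable surface has genus exactly $g(G_1^+)+g(G_2^+)-1$, not merely at most that. In the lower bound, you rightly flag the interleaved case as the crux, but your sketch handles only a single interleaved terminal with two alternating blocks. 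You still owe the argument when both $x$ and $y$ are interleaved, or when the interleaving at a terminal has $2k$ blocks with $k\ge 2$: one must show that disentangling $G_1$ from $G_2$ consumes at least one handle in total (so that $g\ge g(G_1)+g(G_2)+1$), and the ``single twist'' heuristic does not by itself justify this without the full face/Euler bookkeeping. These are exactly the computations carried out in~\cite{decker-1985}.
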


\begin{figure}
  \centering
  \includegraphics{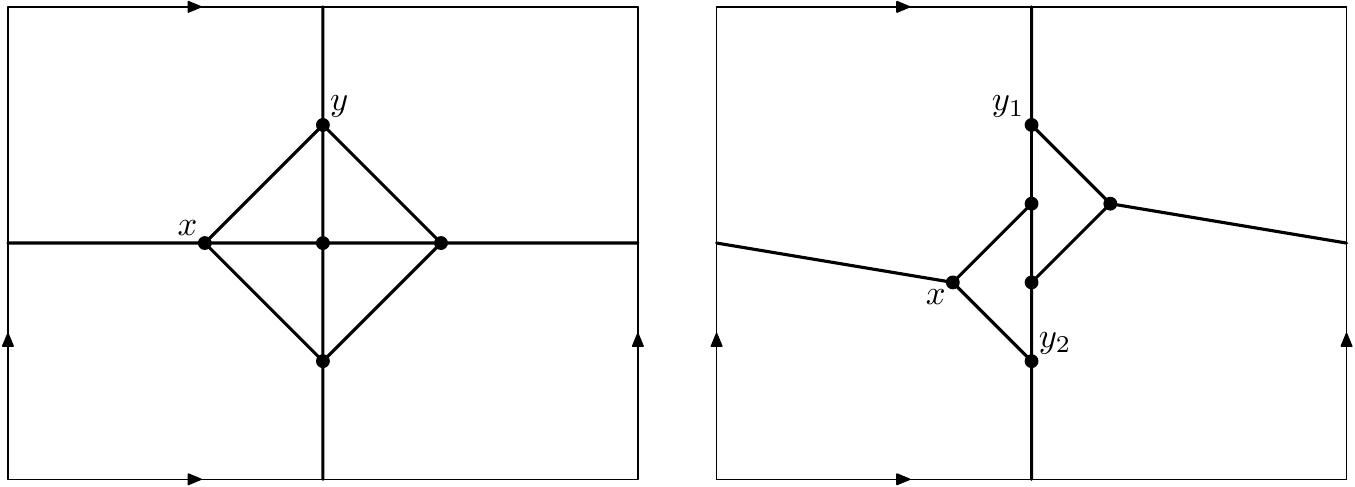}
  \caption{Kuratowski graphs and their two-vertex alternating embeddings in the torus.}
  \label{fg-kuratowski-alt}
\end{figure}

Note that both $K_5$ and $K_{3,3}$ are $xy$-alternating on the torus for any pair of vertices $x$ and $y$ (see~Fig.~\ref{fg-kuratowski-alt}).

For a graph $G$ and a vertex $x$ of $G$, the graph $G'$ is obtained by \df{splitting} $G$ at $x$ if $x$ is replaced by two adjacent vertices $x_1$ and $x_2$
and edges incident with $x$ in $G$ are distributed arbitrarily to $x_1$ and $x_2$ in $G'$.
By doing the same except that $x_1$ and $x_2$ are non-adjacent, a resulting graph $G'$ is said to be obtained by \df{cutting} of $G$ at $x$.

Suppose that a graph $G$ is embedded in some surface $\SSS$.
Let $\gamma$ be a simple closed curve in $\SSS$ that intersects the embedded graph $G$ only at vertices of $G$.
The number of vertices in $\gamma \cap V(G)$ is called the \df{width} of $\gamma$ (with respect to the embedded graph).
If $\gamma$ intersects $G$ at a vertex $z$, then it separates the edges incident with $z$ into two parts, \df{$\gamma$-sides} at $z$,
according to their appearance in the local rotation around $z$.
The graph obtained by cutting $G$ at each vertex $v$ in $\gamma \cap V(G)$ using the $\gamma$-sides to partition the edges
is said to be obtained by \df{cutting $G$ along $\gamma$}.
The curve $\gamma$ also induces the \df{cutting} of the surface $\SSS$ along $\gamma$, and the cut graph is embedded in the cut surface.
A curve is \df{orientizing} for a $\Pi$-embedded graph $G$ if cutting $G$ along $\gamma$ yields an orientable embedding of the
resulting graph using the embedding induced by $\Pi$.
The \df{orientizing face-width} of $G$ is the minimum width of an orientizing curve.

The next lemma outlines three characterizations of $\A_{xy}^k$.

\begin{lemma}
\label{lm-alt-equiv}
  Let $G$ be a graph with terminals $x$ and $y$. 
  If $G$ does not embed into $\SSS_{k-1}$, then the following statements are equivalent:
  \begin{enumerate}[\rm(i)]
  \item 
    $G$ is in $\A_{xy}^k$.
  \item
    $G$ has an embedding $\Pi$ into $\NN_{2k-1}$ with an orientizing $1$-sided simple closed curve $\gamma$ of width 2 going through $x$ and $y$.
  \item
    $G$ can be cut at $x$ and $y$ so that the resulting graph embeds into $\SSS_{k-1}$ with $x_1$, $y_1$, $x_2$ and $y_2$ appearing on a common face (in the stated order).
  \item
    $G^*$ embeds into $\SSS_k$.
  \end{enumerate}
\end{lemma}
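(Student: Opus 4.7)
The plan is to prove the four-way equivalence via three linking pairs: (i)$\iff$(iv), (ii)$\iff$(iii), and (i)$\iff$(iii). The hypothesis $G\not\emb\SSS_{k-1}$ gives $g(G)\ge k$, and since $G\ss G^*$ also $g(G^*)\ge k$. For (i)$\iff$(iv), I would apply the characterization of Decker, Glover, and Huneke recalled just before Theorem~\ref{th-decker-ori}, namely that $G$ is $xy$-alternating iff $g(G^*)=g(G)$. Combined with the bounds above, membership in $\A_{xy}^k$ (which under the hypothesis coincides with being $xy$-alternating on $\SSS_k$) is equivalent to $g(G^*)=g(G)=k$, that is, to $G^*\emb\SSS_k$.

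For (ii)$\iff$(iii), I would perform a topological surgery along $\gamma$. Given (ii), cut both $\NN_{2k-1}$ and the embedded graph along $\gamma$; since $\gamma$ is orientizing and 1-sided, the resulting capped surface is orientable of Euler genus $2k-2$, that is, $\SSS_{k-1}$. The width-$2$ condition splits $x$ and $y$ into $x_1,x_2,y_1,y_2$, and because $\gamma$ is 1-sided its single cut-boundary circle lists them cyclically as $x_1,y_1,x_2,y_2$, which is (iii). Conversely, given (iii), glue a Möbius band to the distinguished face so that its single boundary circle identifies $x_1\sim x_2=x$ and $y_1\sim y_2=y$; this adds a crosscap, producing an $\NN_{2k-1}$-embedding whose 1-sided core through $x$ and $y$ is orientizing.

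For (i)$\iff$(iii), the key tool is a combinatorial vertex-cut argument governed by the following Euler-characteristic principle: cutting a vertex's rotation at two angles lying in a common face increases the face count by one and decreases the orientable genus by one, while cutting at two angles lying in distinct faces decreases the face count by one and preserves the genus. For (i)$\Rightarrow$(iii), start from an embedding of $G$ in $\SSS_k$ with an alternating face $W$; cut $G$ at $x$ at its two $W$-angles (which both lie in $W$), so that $W$ splits into two sub-faces each carrying one of $x_1,x_2$ and one visit of $y$, while the genus drops to $k-1$. Then cut at $y$ at its two $W$-angles, which now lie in the two distinct sub-faces, so that this cut rejoins them into a single face while keeping the genus at $k-1$. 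A direct trace of the resulting face walk shows that its cyclic order is $x_1,y_1,x_2,y_2$ (up to relabeling of $y_1,y_2$). For (iii)$\Rightarrow$(i), reverse the two cuts as merges in the opposite order, again breaking the rotations at the distinguished $W'$-angles: first merging $y_1\sim y_2$ splits $W'$ into two sub-faces (each containing $y$ once and one of $x_1,x_2$) while preserving the genus at $k-1$, and then merging $x_1\sim x_2$ joins those two sub-faces into one while raising the genus to $k$. A trace of the final face walk shows its cyclic order is $x,y,x,y$, yielding the required alternating embedding.

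The main obstacle is the careful Euler-characteristic bookkeeping in the cut/merge argument for (i)$\iff$(iii): at each step one must verify which of the two cases (same-face vs.\ distinct-face) applies, so that the total genus shift is exactly one and the resulting face has the correct cyclic structure. The linchpin is the observation that the first operation (at one of the two terminals) creates a face-split that forces the two distinguished angles at the other terminal to lie in distinct sub-faces, thereby forcing the second operation into the complementary case and pinning down the net effect.
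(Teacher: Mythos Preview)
Your argument is correct and covers all four equivalences, but it is organized differently from the paper and uses a different tool for one of the key links.

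The paper proves the equivalence via the cycle (i)$\Rightarrow$(ii)$\Rightarrow$(iii)$\Rightarrow$(i) together with (i)$\iff$(iv) from Decker--Glover--Huneke. In particular, the step (i)$\Rightarrow$(ii) is obtained by invoking Lemma~\ref{lm-alternating} of Archdeacon and Huneke, which produces the $\NN_{2k-1}$-embedding and the orientizing curve directly from the alternating face; the step (iii)$\Rightarrow$(i) is done by adding a handle into the distinguished face of the $\SSS_{k-1}$-embedding, drawing the edges $x_1x_2$ and $y_1y_2$ through it, and then contracting them. You instead prove (i)$\iff$(iii) directly by a combinatorial cut/merge on the rotation system, tracking the Euler characteristic at each step and checking that the first operation at one terminal forces the second operation at the other terminal into the complementary (same-face vs.\ distinct-face) case. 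Your (iii)$\Rightarrow$(i) is essentially the same construction as the paper's handle-plus-contraction, just phrased at the level of rotations; the genuine novelty is your (i)$\Rightarrow$(iii), which replaces the appeal to Archdeacon--Huneke by an explicit elementary computation. This makes your proof more self-contained, at the cost of the careful face-walk bookkeeping you flag as the main obstacle. Your treatment of (ii)$\iff$(iii) via cutting along $\gamma$ and its inverse (gluing a M\"obius band) is the natural surgery and matches the paper's (ii)$\Rightarrow$(iii); you additionally supply the reverse direction (iii)$\Rightarrow$(ii), which the paper does not need given its cyclic structure.
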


The proof of Lemma~\ref{lm-alt-equiv} uses the following result by Archdeacon and Huneke~\cite{archdeacon-1989}.

\begin{lemma}
\label{lm-alternating}
  Let $G$ be a $\Pi$-embedded graph and $W$ a $\Pi$-facial walk. 
  If two vertices $x$ and $y$ appear twice in $W$ in the alternating order $x, y, x, y$, 
  then there exists an embedding $\Pi'$ of $G$ of Euler genus $\eg(\Pi)-1$ such that
  every $\Pi$-facial walk is $\Pi'$-facial except for $W$ which turns into
  two $\Pi'$-facial walks $W_1$ and $W_2$, each of which contains both $x$ and $y$.
  Moreover, the curve $\gamma$ passing through $x$ and $y$ and the faces $W_1$ and $W_2$ is $1$-sided in $\Pi'$
  and the signatures of edges in $\Pi'$ differ from $\Pi$ only by switching the signatures of a $\gamma$-side at $x$
  and a $\gamma$-side at $y$.
\end{lemma}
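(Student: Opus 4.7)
The plan is to construct $\Pi' = (\pi', \lambda')$ by a local ``gauge change'' at $x$ and $y$, then verify the claimed face structure. First I would label the four corners of $W$ at $v_{i_1}=x$, $v_{i_2}=y$, $v_{i_3}=x$, $v_{i_4}=y$ by $c_1, c_2, c_3, c_4$. The corners $c_1, c_3$ split the cyclic rotation $\pi(x)$ into two arcs, the two $\gamma$-sides at $x$, which I call $S_x^+$ and $S_x^-$; analogously define $S_y^\pm$ at $y$. Then I would define $\Pi'$ by reversing $\pi(x)$ on the arc $S_x^+$, flipping $\lambda(e)$ on every edge $e$ whose $x$-end-dart lies in $S_x^+$, and performing the symmetric operations at $y$.

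The main step is to show that every facial walk of $\Pi$ other than $W$ is preserved in $\Pi'$. A facial walk avoiding $x$ and $y$ uses no edge whose signature changed, hence it is unaffected. A facial walk through $x$ at a corner $(d^-, d^+)$ other than $c_1, c_3$ has both of its darts in the same $\gamma$-side: if both lie in $S_x^-$ nothing was changed, while if both lie in $S_x^+$ the rotation reversal and the signature flip of the incoming edge cancel (a standard gauge calculation on ``oriented darts''), so $d^+$ remains the successor of $d^-$. The same analysis at $y$ handles corners other than $c_2, c_4$. Therefore only $W$ can be restructured. Tracing the face-permutation through the four modified corners in $\Pi'$ and using the alternating order $x, y, x, y$ shows that $W$ breaks into two facial walks $W_1, W_2$: one consists of the subwalk from $c_1$ to $c_2$ followed by the subwalk from $c_3$ to $c_4$ (the ends match because $v_{i_1} = v_{i_3} = x$ and $v_{i_2} = v_{i_4} = y$), and the other consists of the remaining two subwalks; each visits both $x$ and $y$.

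Since $|V(G)|$ and $|E(G)|$ are unchanged and $|F|$ increases by $1$, Euler's formula gives $\eg(\Pi') = \eg(\Pi) - 1$. For the final claim, let $\gamma$ be the closed curve joining the two $x$-corners through the interior of one of $W_1, W_2$ and the two $y$-corners through the other, so that $\gamma$ passes through $x$ and $y$ once each. Transporting a local orientation along $\gamma$ crosses from $S_x^+$ to $S_x^-$ at $x$, picking up exactly one signature flip, and symmetrically at $y$ for a second flip; the total orientation reversal witnesses that $\gamma$ is $1$-sided in $\Pi'$. The hard part will be verifying the gauge-cancellation at unaffected corners rigorously in the signature-and-rotation formalism: at the dart level this reduces to a single algebraic identity, but edges with multiple ends among the flipped sides -- loops at $x$ with both darts in $S_x^+$, or edges from $x$ to $y$ with one dart in each of $S_x^+, S_y^+$ -- require separate case-checking and account for most of the detailed bookkeeping in a full proof.
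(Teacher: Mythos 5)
The paper offers no proof of this lemma at all: it is quoted from Archdeacon and Huneke~\cite{archdeacon-1989} and used as a black box, so there is no in-paper argument to compare yours against; I can only judge the proposal on its own terms. Your construction is the right one. In particular, you are correct that the operation must \emph{reverse the rotation} on the flipped $\gamma$-sides and not merely switch signatures: on the $4$-dipole on the torus (vertices $x,y$, four parallel edges, rotation $(e_1,e_2,e_3,e_4)$ at both ends, both faces $xy$-alternating), switching the signatures of a $\gamma$-side at $x$ and one at $y$ while keeping the rotations produces a Klein-bottle embedding with two quadrilateral faces, whereas your partial reflection produces the projective plane with faces of lengths $2$, $2$, $4$, exactly as the lemma demands. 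Your treatment of the unaffected corners, the reassembly of $W$ into $A_1\cup A_3$ and $A_2\cup A_4$ (in the notation where $A_1,\dots,A_4$ are the four subwalks between consecutive marked corners), and the Euler count are all sound, modulo the dart-level bookkeeping you already flag.

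The one genuine gap is the last step. As written, your transport argument counts one reversal at $x$ and a second at $y$ and concludes that ``the total orientation reversal witnesses that $\gamma$ is $1$-sided''; but two reversals compose to the identity, so this reasoning, taken literally, shows $\gamma$ is $2$-sided. The clean way to finish (and the place where the alternation hypothesis is genuinely used) is to cut along $\gamma$ and count boundary circles. Cutting splits $x$ into $x_a,x_b$ and $y$ into $y_a,y_b$ according to the $\gamma$-sides, and splits each of $W_1=A_1\cup A_3$ and $W_2=A_2\cup A_4$ into two half-disks; the four resulting cut-arcs join $x_a$ to $y_a$ (along the $A_1$-side of $W_1$), $y_a$ to $x_b$ (along the $A_4$-side of $W_2$), $x_b$ to $y_b$ (along the $A_3$-side of $W_1$), and $y_b$ to $x_a$ (along the $A_2$-side of $W_2$). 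Because the four corners interlace, these arcs close up into a \emph{single} boundary circle $x_a,y_a,x_b,y_b$, which is exactly the statement that $\gamma$ is $1$-sided (and, as a bonus, gives the interlaced cofacial order used in item (iii) of Lemma~\ref{lm-alt-equiv}). Replace the transport count with this computation or an equivalent check; it is not optional bookkeeping, since for a non-alternating pair of corners the same surgery yields a $2$-sided curve.
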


\begin{proof}[Proof of Lemma~\ref{lm-alt-equiv}]
  The equivalence of (i) and (iv) was proven by Decker et al.~\cite{decker-1985}.

  (i)$\imp$(ii):
  Since $G$ does not embed into $\SSS_{k-1}$, it $\Pi$-embeds into $\SSS_k$ with $x$ and $y$ alternating in a $\Pi$-face $W$.
  By Lemma~\ref{lm-alternating}, there is an embedding $\Pi'$ of Euler genus $2k-1$ with two $\Pi'$-faces $W_1$ and $W_2$,
  both containing $x$ and $y$. The curve $\gamma$ obtained by connecting vertices $x$ and $y$ in both faces $W_1$ and $W_2$
  is the sought $1$-sided curve of width 2.
  Since the signatures of edges in $\Pi$ are positive, the edges of negative signature in $\Pi'$ form
  two $\gamma$-sides of $x$ and $y$ (respectively).
  Thus cutting $G$ along $\gamma$ yields an orientable embedding and $\gamma$ is orientizing.

  (ii)$\imp$(iii):
  Cutting along the 1-sided orientizing curve $\gamma$ yields an orientable embedding $\Pi$ of genus $k-1$.
  Since $\gamma$ is 1-sided, the vertices obtained by cutting $G$ along $y$ lie on a common face in the interlaced order.

  (iii)$\imp$(i):
  Take an embedding $\Pi$ of the resulting graph $G'$ into $\SSS_{k-1}$ with $x_1, y_1, x_2, y_2$ on a common face $W$.
  Let $G'' = G' + x_1x_2 + y_1y_2$.
  We extend $\Pi$ to an embedding $\Pi'$ of $G''$ into $\SSS_k$ by embedding the new edges into $W$ (and adding a handle).
  The number of faces of $\Pi'$ stays the same but the number of edges is increased by two.
  Thus $g(\Pi') = g(\Pi) + 1$.
  By contracting the edges $x_1x_2$ and $y_1y_2$, we obtain $G$ and its $xy$-alternating embedding in $\SSS_k$.
\end{proof}

The classical result of Robertson and Seymour~\cite{robertson-1990} asserts that the set of obstructions for each minor-closed family of graphs is finite.
In particular, this implies that $\Forb(\SSS_k)$ is finite for each $k \ge 0$. A \df{topological obstruction} $G$ for $\SSS_k$ is a graph with no vertices of degree 2 that does not embed in $\SSS_k$
but each proper subgraph of $G$ does.
Since minors and \df{topological minors} ($H$ is a \df{topological minor} of $G$ if $G$ contains a subdivision of $H$ as a subgraph) 
are closely related, the set $\Forb^*(\SSS_k)$ of topological obstructions is also finite for each $k \ge 0$ (see~\cite[Prop.~6.1.1.]{mohar-book}).
Unfortunately, since the graphs in the classes $\A_{xy}^k$ have terminals, the result of Robertson and Seymour does not directly apply and thus it is not clear {\it a priori\/} whether the sets $\F_{xy}^k$ are finite.
The next lemma shows that the graphs in $\F_{xy}^k$ are derived from graphs in $\Forb^*(\SSS_k)$ and thus the finiteness of $\Forb^*(\SSS_k)$ implies the finiteness of $\F_{xy}^k$.

\begin{lemma}
\label{lm-finiteness}
  Let $G \in \F_{xy}^k$. Then precisely one of the graphs $G$, $G^+$, or $G^*$ belongs to $\Forb^*(\SSS_k)$.
\end{lemma}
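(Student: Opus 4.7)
My approach hinges on the equivalence $G \notin \A_{xy}^k \iff g(G^*) > k$ provided by Lemma~\ref{lm-alt-equiv}(iv). The first step is to establish the chain $g(G) \le g(G^+) \le g(G^*)$: the first inequality follows from $G \subseteq G^+$, and the second by applying the 2-sum genus formula in Theorem~\ref{th-decker-ori} to $G^* = G \cup_{xy}(K_5 - xy)$, using $g(K_5 - xy) = 0$, $g(K_5) = 1$, and $\epsilon(K_5 - xy) = 1$. Since membership in $\F_{xy}^k$ forces $g(G^*) \ge k+1$, this chain partitions the situation into three mutually exclusive cases according to where the genus first exceeds $k$: (i) $g(G) > k$ with candidate obstruction $G$; (ii) $g(G) \le k < g(G^+)$ with candidate $G^+$; (iii) $g(G^+) \le k < g(G^*)$ with candidate $G^*$.

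For each case I verify the three defining properties of $\Forb^*(\SSS_k)$. Non-embeddability is immediate from the case hypothesis. Absence of degree-2 vertices follows from the remark in Section~\ref{sc-general} that non-terminals in $\F_{xy}^k$ have degree at least $3$, combined with the observation that the operations $^+$ and $^*$ add respectively one and three incident edges at each terminal, so terminals pose no difficulty beyond case (i). The bulk of the work is showing every proper subgraph embeds in $\SSS_k$. For any $e \in E(G)$, minimality of $G$ in $\F_{xy}^k$ plus Lemma~\ref{lm-alt-equiv}(iv) give $g((G-e)^*) \le k$, which directly handles case (i) and all $G$-edge deletions in cases (ii) and (iii). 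The deletion of $xy$ from $G^+$ in case (ii) returns $G$, which embeds by hypothesis; deleting a $G$-edge $e$ from $G^+$ in case (ii) is handled by splitting the conclusion $G - e \in \A_{xy}^k$ into its two sub-cases (either $G - e$ embeds in $\SSS_{k-1}$, so one adds $xy$ for a single extra handle; or $G - e$ is $xy$-alternating on $\SSS_k$, so one draws $xy$ inside the alternating face). For an edge $e$ internal to the gadget $K_5 - xy$ in case (iii), the key is that $(K_5 - xy - e)^+ = K_5 - e$ is planar, whence Lemma~\ref{lm-planar-patch} extends an embedding of $G^+$ in $\SSS_k$ (which exists by the case hypothesis) to an embedding of $G \cup_{xy}(K_5 - xy - e) = G^* - e$ in $\SSS_k$. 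Vertex deletions reduce to edge deletions by first removing the incident edges.

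For the uniqueness assertion, case (i) gives that $G$ is a non-embeddable proper subgraph of $G^+$ (when $xy \notin E(G)$) and of $G^*$, disqualifying both from $\Forb^*(\SSS_k)$; case (iii) is easiest because $G$ and $G^+$ genuinely embed in $\SSS_k$ by hypothesis. Case (ii) needs a short extra computation: applying Theorem~\ref{th-decker-ori} to $G^* - v = G \cup_{xy}(K_4 - xy)$ for a gadget vertex $v \in \{a,b,c\}$, noting that $(K_4 - xy)^+ = K_4$ is planar but not $xy$-alternating (all its planar faces are triangles, so $\epsilon(K_4 - xy) = 0$), one obtains $g(G^* - v) = \min\{g(G^+), g(G) + 1\} = k + 1$, producing a non-embeddable proper subgraph of $G^*$. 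I expect the main obstacle to be the case-(iii) analysis for edges of the gadget, which requires identifying each $(K_5 - xy - e)^+$ with $K_5 - e$ and invoking Lemma~\ref{lm-planar-patch}; beyond this the proof is a methodical assembly of the three tools Lemma~\ref{lm-alt-equiv}(iv), Theorem~\ref{th-decker-ori}, and Lemma~\ref{lm-planar-patch}.
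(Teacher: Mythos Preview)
Your proof is correct and follows essentially the same three-way case split as the paper: according to whether $g(G)>k$, or $g(G)=k<g(G^+)$, or $g(G^+)=k<g(G^*)$. The verifications in each case use the same tools (Lemma~\ref{lm-alt-equiv}(iv), Theorem~\ref{th-decker-ori}, Lemma~\ref{lm-planar-patch}) in the same places.

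The one point where the paper is noticeably more economical is uniqueness. You argue case by case, and in case~(ii) you run a genuine computation with Theorem~\ref{th-decker-ori} applied to $G^*-v=G\cup_{xy}(K_4-xy)$, checking that $\epsilon(K_4-xy)=0$ to conclude $g(G^*-v)=k+1$. The paper dispenses with all of this in one line: since $G$ is a topological minor of $G^+$ (trivially) and $G^+$ is a topological minor of $G^*$ (replace $xy$ by a path $x\,a\,y$ through a gadget vertex), at most one of the three can be a topological obstruction --- if a larger one were minimal, the smaller one would embed, and if a smaller one fails to embed, the larger one has a non-embeddable proper subgraph. This observation makes your case~(ii) computation unnecessary. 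Your argument is not wrong, just longer than it needs to be.
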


\begin{proof}
  For each $e \in E(G)$ (possibly $e = xy$), we have $G - e \in \A_{xy}^k$.
  Therefore, we have that $g(G-e) \le k$ and, if $g(G - e) = k$, then $G - e$ is $xy$-alternating on $\SSS_k$.
  If $g(G) > k$, then $G \in \Forb^*(\SSS_k)$, since $g(G - e) \le k$ for each $e \in E(G)$.
  Thus we may assume that $g(G) = k$ and $G$ is {\em not} $xy$-alternating on $\SSS_k$.

  Suppose that $g(G^+) > g(G)$. For each $e \in E(G)$, we have $G - e \in \A_{xy}^k$, hence either $g(G - e) = k-1$ and thus $g(G^+ - e) = k$, or $G - e$ is $xy$-alternating
  on $\SSS_k$ and then $g(G^+ - e) = k$ since the edge $xy$ can be embedded into the $xy$-alternating face. Therefore, $G^+ \in \Forb^*(\SSS_k)$.

  Suppose now that $g(G^+) = g(G)$. We shall show that $G^* \in \Forb^*(\SSS_k)$.
  Since $G$ is not $xy$-alternating on $\SSS_k$, $g(G^*) > g(G)$ by Lemma~\ref{lm-alt-equiv}.
  For $e \in E(G)$, either $g(G -e) = k-1$ and thus $g(G^* - e) \le k$ by~(\ref{eq-ori}), 
  or $G -e$ is $xy$-alternating on $\SSS_k$ and so $g(G^* - e) = k$ also by~(\ref{eq-ori}).
  Let $H$ be the $xy$-bridge of $G^*$ induced by the edges not in $G$, which is isomorphic to $K_5$ minus an edge.
  For $e \in E(H)$, since $H^+ - e$ is planar, Lemma~\ref{lm-planar-patch} gives that $g(G^* - e) = g(G^+) = g(G) = k$.
  This shows that $G^* \in \Forb^*(\SSS_k)$.

  In conclusion, $G$, $G^+$, or $G^*$ belongs to $\Forb^*(\SSS_k)$, and it is clear that only one of these graphs is in $\Forb^*(\SSS_k)$
  since they are topological minors of each other.
\end{proof}

Lemma~\ref{lm-finiteness} has the following immediate corollary.

\begin{corollary}
\label{cr-finiteness}
  For $k \ge 1$, the class of graphs $\F_{xy}^k$ is finite.
\end{corollary}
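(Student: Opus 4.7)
The plan is to derive the corollary directly from Lemma~\ref{lm-finiteness} by combining it with the Robertson--Seymour finiteness of $\Forb^*(\SSS_k)$, which is already cited in the discussion preceding the lemma. The strategy is: bound how many graphs-with-terminals $G\in\F_{xy}^k$ can correspond to a single topological obstruction $H\in\Forb^*(\SSS_k)$, then sum over $H$.

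First I would fix an arbitrary $H\in\Forb^*(\SSS_k)$ and count the preimages. By Lemma~\ref{lm-finiteness}, any $G\in\F_{xy}^k$ with $H\in\{G,G^+,G^*\}$ comes equipped with an ordered pair of terminals $(x,y)\subset V(G)\ss V(H)$, of which there are fewer than $|V(H)|^2$. Given such a pair, in the cases $H=G$ or $H=G^+$ the underlying graph $G$ is determined (it is either $H$ or $H-xy$). In the case $H=G^*$, the graph $G$ is obtained from $H$ by identifying a specific $xy$-bridge isomorphic to $K_5-xy$ and deleting its interior; the number of such subgraphs of $H$ is at most $\binom{|V(H)|}{3}$, hence finite. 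So each $H$ contributes only finitely many $G\in\F_{xy}^k$.

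Second I would invoke finiteness of $\Forb^*(\SSS_k)$: the Robertson--Seymour graph minor theorem guarantees that the minor-closed family of graphs embeddable in $\SSS_k$ has a finite obstruction set, and as noted earlier in the paper (via~\cite[Prop.~6.1.1.]{mohar-book}) the topological obstruction set $\Forb^*(\SSS_k)$ is consequently finite as well. Summing the bound from the previous paragraph over $H\in\Forb^*(\SSS_k)$ yields a finite total, establishing $|\F_{xy}^k|<\infty$.

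There is essentially no difficult step here; the only mildly delicate point is that the same underlying graph $G$ might in principle appear with several different choices of terminal pair, but this can only decrease the count and is harmless for a finiteness argument. The entire proof is therefore a short bookkeeping corollary to Lemma~\ref{lm-finiteness}, and I would write it as one paragraph pointing out that the three possibilities $H\in\{G,G^+,G^*\}$ each yield at most polynomially many $G$'s per $H$, and that $|\Forb^*(\SSS_k)|<\infty$.
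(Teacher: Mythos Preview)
Your proposal is correct and follows essentially the same approach as the paper: both arguments build a finite superset of $\F_{xy}^k$ by taking each $H\in\Forb^*(\SSS_k)$ and listing the finitely many graphs-with-terminals $G$ for which $H\in\{G,G^+,G^*\}$, then invoke the finiteness of $\Forb^*(\SSS_k)$. The paper states this as an explicit inclusion $\F_{xy}^k\sss\F$ for a concretely described family $\F$, while you phrase it as a per-$H$ counting bound, but the content is the same.
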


\begin{proof}
Let $\F$ be the family of all graphs with two terminals obtained from graphs $H \in \Forb^*(\SSS_k)$
by declaring two vertices of $H$ to be terminals (in all possible ways), by declaring two adjacent vertices
to be terminals and deleting the edge joining them or by removing a bridge isomorphic to $K_5$ minus an edge
and declaring its two vertices of attachments to be terminals.
By Lemma~\ref{lm-finiteness}, $\F_{xy}^k \sss \F$. This completes the proof since $\Forb^*(\SSS_k)$ is finite.
\end{proof}

\begin{lemma}
  \label{lm-alt-jump}
  For $k \ge 1$, let $G \in \F_{xy}^k$.
  If  $G$ is not embeddable into $\SSS_k$,
  then $xy \not\in E(G)$, $G$ is $xy$-alternating on $\SSS_{k+1}$ and an obstruction for $\SSS_k$.
\end{lemma}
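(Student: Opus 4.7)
The plan is to prove the three conclusions in turn, with the obstruction statement falling out of the genus computations used for the other two.

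First, I would rule out $xy \in E(G)$ by contradiction. If $xy$ were an edge, the minimality of $G$ would give $G - xy \in \A_{xy}^k$. In the first branch of the definition, $G - xy$ embeds in $\SSS_{k-1}$ and adjoining $xy$ through at most one extra handle realises $G$ in $\SSS_k$; in the second branch, $G - xy$ is $xy$-alternating on $\SSS_k$ and the edge $xy$ can be drawn as a chord of the alternating face without increasing the genus. Both outcomes contradict the hypothesis that $G$ does not embed in $\SSS_k$. Next, the elementary inequality $g(G) \le g(G-e) + 1$, applied to any edge $e$, together with $g(G-e) \le k$ from $G - e \in \A_{xy}^k$, gives $g(G) \le k + 1$, and combined with the hypothesis we conclude $g(G) = k + 1$. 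Because $xy \notin E(G)$, every contraction $G/e$ is a legal minor operation, so $G/e \in \A_{xy}^k$ and $g(G/e) \le k$; together with $g(G - e) \le k$, this shows that $G$ is a minor-minimal obstruction for $\SSS_k$.

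The remaining task is to exhibit an $xy$-alternating embedding of $G$ in $\SSS_{k+1}$. My plan is to invoke Lemma~\ref{lm-alt-equiv}(iv) with parameter $k + 1$ (legitimate since $G$ does not embed in $\SSS_k$), for which it suffices to prove $g(G^*) \le k + 1$. Fix any $e \in E(G)$ and note $G^* - e = (G - e)^*$; I would show that this graph embeds in $\SSS_k$ by splitting according to which clause of the definition of $\A_{xy}^k$ places $G - e$ in the class. If $G - e$ embeds in $\SSS_{k-1}$, then Theorem~\ref{th-decker-ori} applied with the second summand $K_5 - xy$ (of ordinary genus $0$) gives $g((G-e)^*) \le g(G-e) + 1 \le k$. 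If instead $G - e$ is $xy$-alternating on $\SSS_k$, then $G - e$ does not embed in $\SSS_{k-1}$ and Lemma~\ref{lm-alt-equiv}(iv) applied to $G - e$ directly yields $g((G-e)^*) \le k$. Adding the edge $e$ back increases the genus by at most one, so $g(G^*) \le k + 1$; Lemma~\ref{lm-alt-equiv}(iv) with parameter $k + 1$ then places $G \in \A_{xy}^{k+1}$, and since $G$ does not embed in $\SSS_k$ this forces $G$ to be $xy$-alternating on $\SSS_{k+1}$.

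The only delicate point is the two-case analysis in the last step, which is handled uniformly by Decker's formula on one side and by Lemma~\ref{lm-alt-equiv}(iv) on the other. Everything else reduces to the standard fact that deleting or adding a single edge changes the orientable genus by at most one, so I do not anticipate any genuine obstacle beyond careful bookkeeping.
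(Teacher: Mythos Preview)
Your argument is correct. The treatment of $xy \notin E(G)$ and of $G \in \Forb(\SSS_k)$ is essentially the paper's, up to minor bookkeeping (the paper observes directly that $g(G-xy)=k$ rather than splitting into the two branches of $\A_{xy}^k$).

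For the conclusion that $G$ is $xy$-alternating on $\SSS_{k+1}$, however, you take a genuinely different route. The paper gives an explicit geometric construction: since $g(G-e)=k$ for any edge $e=uv$, the graph $G-e$ has an $xy$-alternating embedding $\Pi$ in $\SSS_k$ with alternating face $W$; because $G$ itself does not embed in $\SSS_k$, the vertices $u$ and $v$ cannot be $\Pi$-cofacial, so one adds a handle between distinct faces $W_u\ni u$ and $W_v\ni v$ to embed $e$, and since at most one of $W_u,W_v$ equals $W$, the alternating face survives (possibly enlarged) in the resulting embedding in $\SSS_{k+1}$. Your argument instead passes through the characterisation of Lemma~\ref{lm-alt-equiv}(iv), bounding $g(G^*)$ by showing $g((G-e)^*)\le k$ via Theorem~\ref{th-decker-ori} or Lemma~\ref{lm-alt-equiv}, and then using the edge-addition inequality. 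The paper's proof is more self-contained and hands-on, not invoking the Decker--Glover--Huneke formula at this point; your proof is cleaner in that it reuses the $G^*$ machinery already set up and avoids the cofaciality argument. Both are short and natural.
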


\begin{proof}
  If $xy \in E(G)$, then $G - xy \in \A_{xy}^k$. Since $g(G) > k$, we have $g(G - xy) = k$ and
  thus $G - xy$ has an $xy$-alternating embedding in $\SSS_k$.
  But then $G$ also embeds in $\SSS_k$. This contradiction shows that $xy \not\in E(G)$.

  Since $G\minor e \in \A_{xy}^k$ for every edge $e \in E(G)$ and each minor operation $\minor$,
  $G\minor e$ embeds into $\SSS_k$. Hence $G$ is an obstruction for $\SSS_k$.

  Let us construct an $xy$-alternating embedding in $\SSS_{k+1}$.
  Let $e = uv$ be an arbitrary edge in $G$ and consider the graph $G - e$.
  Clearly, the genus of $G - e$ cannot drop by more than one and since $G \in \F_{xy}^k$, there has to
  be an $xy$-alternating embedding of $G - e$ in $\SSS_k$.
  Let $\Pi$ be this $xy$-alternating embedding of $G - e$ in $\SSS_k$ and
  let $W$ be an $xy$-alternating $\Pi$-face.
  Pick two arbitrary $\Pi$-faces $W_u$ and $W_v$ incident with $u$ and $v$, respectively.
  Since $u$ and $v$ are not $\Pi$-cofacial, $W_u$ and $W_v$ are distinct.
  Extend $\Pi$ to an embedding $\Pi'$ of $G$ in $\SSS_{k+1}$ by adding a handle into faces $W_u$ and $W_v$.
  Since at most one of $W_u$ or $W_v$ is $W$, 
  the $xy$-alternating $\Pi$-face $W$ is extended to an $xy$-alternating $\Pi'$-face.
\end{proof}

The following is an immediate corollary of Lemma~\ref{lm-alt-jump}.

\begin{corollary}
  For $k \ge 1$, we have $\F_{xy}^k \sss \A_{xy}^{k+1}$.
\end{corollary}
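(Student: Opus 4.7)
The plan is to reduce the corollary to a case distinction on $g(G)$ and then apply Lemma~\ref{lm-alt-jump} in the hard case. Recall that by definition $\A_{xy}^{k+1}$ consists of graphs with terminals $x$ and $y$ that either embed into $\SSS_k$ or are $xy$-alternating on $\SSS_{k+1}$. So, to show $G \in \A_{xy}^{k+1}$ for every $G \in \F_{xy}^k$, it suffices to verify one of these two alternatives for such a $G$.

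First I would handle the easy case $g(G) \le k$. Here $G$ embeds in $\SSS_k$, so $G \in \A_{xy}^{k+1}$ immediately from the first clause of the definition. No use of the lemmas in Sect.~\ref{sc-general} is needed here. The only remaining case is $g(G) > k$, i.e., $G$ is not embeddable in $\SSS_k$; but this is exactly the hypothesis of Lemma~\ref{lm-alt-jump}, whose conclusion explicitly asserts that $G$ is $xy$-alternating on $\SSS_{k+1}$. That places $G$ in $\A_{xy}^{k+1}$ via the second clause of the definition, closing the case distinction.

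There is no genuine obstacle, since the technical content, namely the handle-addition argument that promotes an $xy$-alternating embedding of some $G-e$ in $\SSS_k$ to one of $G$ in $\SSS_{k+1}$ using two distinct faces incident to the endvertices of $e$, has already been carried out inside Lemma~\ref{lm-alt-jump}. Hence the corollary reduces to the trivial remark that the dichotomy $g(G) \le k$ versus $g(G) > k$ matches precisely the two clauses in the definition of $\A_{xy}^{k+1}$, and the proof should be written as a one-paragraph deduction from Lemma~\ref{lm-alt-jump}.
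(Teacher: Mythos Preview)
Your proof is correct and is exactly the intended one-line deduction the paper has in mind when it calls this ``an immediate corollary of Lemma~\ref{lm-alt-jump}'': split on whether $G$ embeds in $\SSS_k$, and in the non-embeddable case invoke Lemma~\ref{lm-alt-jump} to get that $G$ is $xy$-alternating on $\SSS_{k+1}$. (As a tiny remark, since $G\in\F_{xy}^k$ implies $G\notin\A_{xy}^k$ and hence $g(G)\ge k$, your first case is really $g(G)=k$; this does not affect the argument.)
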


We think that the scenario forced by Lemma~\ref{lm-alt-jump}, when $G \in \F_{xy}^k$ is not embeddable in $\SSS_k$, is quite unlikely, and
we would like to pose the following conjecture.

\begin{conjecture}
  Let $G$ be in $\F_{xy}^k$. Then $G$ embeds in $\SSS_k$.
\end{conjecture}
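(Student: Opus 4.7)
The plan is to argue by contradiction: suppose some $G \in \F_{xy}^k$ has $g(G) \geq k+1$. Lemma~\ref{lm-alt-jump} then forces $g(G) = k+1$, $xy \notin E(G)$, $G$ to be an obstruction for $\SSS_k$, and $G$ to admit an $xy$-alternating embedding $\Pi$ on $\SSS_{k+1}$. My objective is to locate an edge $e$ and a minor operation $\minor$ for which $G \minor e \notin \A_{xy}^k$, contradicting the minimality of $G$ in $\F_{xy}^k$.

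First I would extract structural information from the $xy$-alternating embedding. Applying Lemma~\ref{lm-alt-equiv} at genus $k+1$, I obtain a graph $G'$, formed by cutting $G$ at $x$ and $y$, that embeds in $\SSS_k$ with the four split terminals $x_1, y_1, x_2, y_2$ appearing on a common face in the alternating order; equivalently, $G$ admits an embedding in $\NN_{2k+1}$ with a one-sided orientizing curve $\gamma$ of width $2$ through $x$ and $y$. Because $G \notin \A_{xy}^k$, the graph $G'$ cannot be embedded in $\SSS_{k-1}$ with its split terminals on a common face in the alternating order; otherwise the implication (iii)$\imp$(i) of Lemma~\ref{lm-alt-equiv} would exhibit an $xy$-alternating embedding of $G$ in $\SSS_k$. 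Moreover, the proof of Lemma~\ref{lm-finiteness} shows that $G \in \Forb^*(\SSS_k)$, so every edge deletion $G - e$ drops the genus to at most $k$, and by minimality of $G$ in $\F_{xy}^k$ the embedding of $G-e$ must be $xy$-alternating whenever $g(G - e) = k$.

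The crux of the argument is then to exhibit an edge $e$ of $G$ such that $g(G - e) = k$ but $G - e$ has no $xy$-alternating embedding on $\SSS_k$. A natural plan is to pick $e$ on the boundary of the alternating face $W = v_1\cdots v_l$ (with $v_{i_1} = v_{i_3} = x$ and $v_{i_2} = v_{i_4} = y$) near an occurrence of $x$ or $y$, and to argue that the restriction of $\Pi$ to $G - e$ is ``rigid'' in the sense that its only degenerations to embeddings on $\SSS_k$ necessarily destroy the alternation of $x$ and $y$. A variant attacks an edge contraction $G/e$ with $e$ incident to a terminal, using that such contractions preserve the terminal structure and the class $\A_{xy}^k$. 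In either case, the proof must rule out a sudden creation of a new alternating face in $G \minor e$, which is where disk-extension ideas of Theorem~\ref{th-disk-ext} or the bridge-overlap dichotomy of Theorem~\ref{th-bridge-overlap} could be brought to bear on the local face structure around $W$.

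The main obstacle I anticipate is that $G$ may admit many non-isotopic $xy$-alternating embeddings on $\SSS_{k+1}$, so a rigidity argument localized to a single face $W$ need not suffice. A complete proof likely requires combining local face analysis with global structural results on topological obstructions for $\SSS_k$, such as bounds on connectivity, face-width lower bounds on higher-genus embeddings, and splitter- or excluded-minor-type induction on $\Forb^*(\SSS_k)$. For $k = 1$, the conjecture can in principle be verified by inspecting each topological obstruction in $\Forb^*(\SSS_1)$ against every pair of candidate terminals, consistent with the case-analysis style used in the remainder of the paper to characterize $\F_{xy}^1$; for general $k$ a new structural insight appears to be needed.
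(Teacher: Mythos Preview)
The statement is a conjecture, and the paper explicitly leaves it open for general $k$, confirming it only for $k=1$. Your proposal reflects this honestly: you set up the contradiction framework correctly via Lemma~\ref{lm-alt-jump} and Lemma~\ref{lm-alt-equiv}, but then concede that the rigidity argument you sketch does not go through and that ``a new structural insight appears to be needed'' for general $k$. So for general $k$ your proposal is, by your own admission, not a proof; this matches the paper, which also offers no proof in that generality.

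For $k=1$, however, the route you suggest --- inspecting each graph in $\Forb^*(\SSS_1)$ against all terminal pairs --- is neither what the paper does nor currently feasible: the paper itself notes that $\Forb(\SSS_1)$ ``already contains thousands of graphs and is not yet determined''. The paper instead determines $\F_{xy}^1$ directly and completely (Theorem~\ref{th-main}) via the dichotomy of Lemma~\ref{lm-k-graph} and the case analyses of Sections~4--6, producing an explicit finite list without ever enumerating $\Forb^*(\SSS_1)$. The conjecture for $k=1$ is then a short corollary: graphs of types (i) and (ii) embed in the torus by inspection, while those of type (iii) are $3$-alternating $XY$-labelled graphs and embed in the torus by adding a single handle to their planar embedding (Fig.~\ref{fg-3-alt}). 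Thus the paper's confirmation for $k=1$ rests on the full classification of $\F_{xy}^1$, not on any knowledge of $\Forb^*(\SSS_1)$.
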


In this paper we confirm the conjecture for $k = 1$.

\section{Basic classification}
\label{sc-basic}

To classify all minimal obstructions for the torus of connectivity 2, we aim to understand the class $\A_{xy}^1$ of $xy$-alternating graphs on the torus
and the set  $\F_{xy}^1$ of its obstructions.

Lemma~\ref{lm-alt-equiv} gives the following characterizations of $\A_{xy}^1$.

\begin{corollary}
\label{cr-alt-equiv}
  Let $G$ be a non-planar graph with terminals $x$ and $y$. The following statements are equivalent:
  \begin{enumerate}[\rm(i)]
  \item 
    $G$ is in $\A_{xy}^1$.
  \item
    $G$ has an embedding $\Pi$ into the projective plane of face-width 2 with a non-contractible curve of width 2 going through $x$ and $y$.
  \item
    $G$ can be cut at $x$ and $y$ so that the resulting graph is planar with $x_1$, $x_2$, $y_1$ and $y_2$ on a common face.
  \item
    $G^*$ embeds into the torus.
  \end{enumerate}
\end{corollary}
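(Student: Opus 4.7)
The plan is to deduce Corollary~\ref{cr-alt-equiv} as the specialization of Lemma~\ref{lm-alt-equiv} to the case $k = 1$, checking only that the slight rewording of condition (ii) is equivalent. First I would observe that $\NN_{2k-1} = \NN_1$ is the projective plane, $\SSS_{k-1} = \SSS_0$ is the sphere, and $\SSS_k = \SSS_1$ is the torus; so the hypothesis "$G$ does not embed into $\SSS_{k-1}$" becomes "$G$ is non-planar", which is exactly the hypothesis of the corollary. With these substitutions, statements (i), (iii), and (iv) of the corollary coincide verbatim with the corresponding statements of Lemma~\ref{lm-alt-equiv}.

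The only non-cosmetic point is the equivalence of (ii). In the projective plane, the homotopy classification of simple closed curves says that every simple closed curve is either contractible (hence $2$-sided and bounding a disk) or non-contractible (hence $1$-sided, and orientizing since cutting along it turns the projective plane into a disk). Therefore "orientizing $1$-sided simple closed curve of width $2$ through $x$ and $y$" from Lemma~\ref{lm-alt-equiv}(ii) and "non-contractible curve of width $2$ through $x$ and $y$" from the corollary describe the same object. It remains to see that the "face-width $2$" clause in the corollary is automatic: the existence of a non-contractible curve of width $2$ gives face-width at most $2$, and face-width at most $1$ would allow $G$ to be re-embedded in the sphere (contract or reroute across a vertex met by a non-contractible curve of width $1$), contradicting non-planarity; hence the face-width is exactly $2$.

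Putting this together, conditions (i)--(iv) of the corollary are pairwise equivalent by direct specialization of Lemma~\ref{lm-alt-equiv}. There is no genuine obstacle here; the only mild care needed is the verification above that the projective-plane versions of "orientizing $1$-sided" and "non-contractible" agree, and that the face-width clause is automatic from non-planarity.
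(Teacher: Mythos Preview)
Your proposal is correct and matches the paper's approach exactly: the paper offers no separate proof and simply presents Corollary~\ref{cr-alt-equiv} as the $k=1$ specialization of Lemma~\ref{lm-alt-equiv}. Your additional care in reconciling the rewording of~(ii)---noting that in the projective plane ``non-contractible'' and ``orientizing $1$-sided'' coincide, and that face-width exactly~$2$ is forced by non-planarity---is sound and fills in detail the paper leaves implicit. One small quibble: statements~(iii) do not quite ``coincide verbatim'', since the corollary lists $x_1,x_2,y_1,y_2$ and drops the order clause present in the lemma; this appears to be a cosmetic inconsistency in the paper rather than a gap in your argument.
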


By Corollary~\ref{cr-alt-equiv}, a non-planar graph $G$ belongs to $\A_{xy}^1$ if and only if the vertices $x$ and $y$ can be split so that
the resulting graph is planar with the new vertices on a common face. This implies that $G/xy$ is planar.

\begin{corollary}
\label{cr-planar-contraction}
  If $G$ is a non-planar graph in $\A_{xy}^1$, then $G /xy$ is planar.
\end{corollary}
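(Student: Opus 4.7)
The plan is to use characterization (iii) of Corollary~\ref{cr-alt-equiv} and manipulate the resulting planar embedding. Since $G$ is non-planar and lies in $\A_{xy}^1$, I obtain a graph $G'$ by cutting $G$ at $x$ and at $y$, together with a planar embedding $\Pi$ of $G'$ in which the four shadow vertices $x_1, y_1, x_2, y_2$ lie on a common face $F$ in this cyclic order.

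I would then insert a single new vertex $v$ in the interior of $F$ and join $v$ by four new edges to $x_1, y_1, x_2, y_2$, routing each edge inside $F$. Because $F$ is an open disk in the planar embedding and the four endpoints appear on its boundary, this can be done without crossings, yielding a planar graph $G''$. Contracting all four new edges identifies $x_1, x_2, y_1, y_2$ into a single vertex; since contraction preserves planarity, the contracted graph $G'''$ is planar as well.

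Finally, I would observe that $G'''$ is precisely (the underlying simple graph of) the graph obtained from $G$ by identifying $x$ and $y$, i.e.\ $G/xy$. Indeed, cutting $G$ at $x$ and at $y$ and then re-identifying the two shadow pairs $\{x_1, x_2\}$ and $\{y_1, y_2\}$ returns $G$, so identifying all four shadows into one vertex returns the graph obtained from $G$ by further identifying $x$ with $y$. The operations of contracting the four auxiliary edges in $G''$ accomplish exactly this identification (modulo suppression of any loops or parallel edges, which does not affect planarity). Hence $G/xy$ is planar.

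There is no real obstacle here; the corollary is essentially a one-move consequence of (iii), since the existence of a common face containing all four cut vertices is exactly what allows the four vertices to be amalgamated in a planar fashion.
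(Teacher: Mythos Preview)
Your proof is correct and follows the same route as the paper, which simply remarks (just before stating the corollary) that characterization (iii) of Corollary~\ref{cr-alt-equiv} ``implies that $G/xy$ is planar'' without spelling out the details. Your argument makes explicit the step the paper leaves implicit, namely that having $x_1,y_1,x_2,y_2$ on a common face allows them to be identified into a single vertex while preserving planarity.
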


We will show below that, if $G/xy$ is non-planar, then there is a Kuratowski subgraph in $G$
with a K-graph disjoint from $x$ and $y$.
%
%
%
The following lemma by Juvan et al.~\cite{juvan-1997} allows us to choose a subgraph without local bridges provided that we have an almost 3-connected graph.
Let $K$ be a subgraph of $G$. The graph $G$ is \df{$3$-connected modulo $K$} if for every vertex set $U \sss V(G)$ with at most 2 elements, 
every connected component of $G - U$ contains a branch vertex of $K$.

\begin{lemma}[Juvan, Marin\v{c}ek and Mohar~\cite{juvan-1997}]
\label{lm-no-local-bridges}
Let $K$ be a subgraph of a graph $G$. 
If $G$ is $3$-connected modulo $K$, then $G$ contains a subgraph $K'$ such that
\begin{enumerate}[\rm(a)]
\item 
  $K'$ is homeomorphic to $K$ and has the same branch vertices as $K$.
\item
  For each branch $e$ of $K$, the corresponding branch $e'$ of $K'$ joins
  the same pair of branch vertices as $e$ and is contained in the union of $e$ and all $K$-bridges
  that are local on $e$.
\item
 $K'$ has no local bridges.   
\end{enumerate}
\end{lemma}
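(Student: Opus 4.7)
The plan is to process each branch of $K$ in turn. Local $K$-bridges belonging to distinct branches of $K$ are pairwise vertex-disjoint apart from branch vertices of $K$, so I may replace each branch independently without disturbing the local bridges of the remaining branches. Fix a branch $e$ of $K$ with endvertices $u$ and $v$, and let $G_e$ denote the subgraph of $G$ formed by $e$ together with all $K$-bridges that are local on $e$. I will find a $uv$-path $e'$ inside $G_e$; the desired subdivision $K'$ is then obtained by replacing each branch $e$ of $K$ by the associated $e'$. Conditions (a) and (b) will be immediate from this construction, so the heart of the argument is to verify (c).

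I would first extract a structural consequence of the hypothesis that $G$ is 3-connected modulo $K$: every non-trivial $K$-bridge $B$ (one with $B^\circ \neq \emptyset$) has at least three attachments. Indeed, if $B$ had only one or two attachments, then removing them from $G$ would isolate $B^\circ$ as a connected component of $G$ containing no branch vertex of $K$, contradicting the hypothesis. Applied to a local bridge on $e$, this says every non-trivial local $K$-bridge on $e$ has at least three attachments, all of which lie on $V(e)$. This abundance of attachments is the key flexibility used below.

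The construction of $e'$ is via an extremal choice: among all $uv$-paths in $G_e$, pick $e'$ to maximize $|V(e') \cap V(G_e)|$. Suppose for contradiction that the resulting $K'$ contains a $K'$-bridge $B'$ that is local on some branch of $K'$. Since branches other than $e$ are untouched, the only new local bridges can occur on $e'$; and because $G_e$ meets the rest of $G$ only through $\{u,v\}$ and through non-local $K$-bridges attached at vertices of $V(e) \setminus V(e')$, the bridge $B'$ must live entirely inside $G_e$ and have all of its attachments on $V(e')$. I would then argue that $B'$ descends from a non-trivial local $K$-bridge $B$ on $e$ with at least three attachments on $V(e)$, so either $B$ has an attachment in $V(G_e) \setminus V(e')$ or $B^\circ$ contains an uncovered vertex; in either case, a rerouting of $e'$ through $B$ produces a $uv$-path $e''$ in $G_e$ with $|V(e'') \cap V(G_e)| > |V(e') \cap V(G_e)|$, contradicting the extremal choice. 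The main obstacle is realizing this rerouting concretely and verifying that it increases the count: it requires a Menger-style ear argument inside $B$ and a careful case analysis depending on whether the relevant attachment is internal to $e$ or equals one of the branch vertices $u, v$, and on whether $B'$ itself is a chord of $e'$ or has a non-empty interior.
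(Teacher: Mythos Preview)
The paper does not give its own proof of this lemma: it is quoted as a result of Juvan, Marin\v{c}ek and Mohar~\cite{juvan-1997} and used as a black box in the proof of Lemma~\ref{lm-kuratowski-pinch}. There is therefore nothing in the present paper to compare your argument against; what follows is an assessment of your sketch on its own merits.

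Your overall strategy---treat each branch $e$ separately inside the subgraph $G_e$ and pick the replacement path $e'$ by an extremal argument---is the natural one, and your observation that every non-trivial $K$-bridge has at least three attachments (from $3$-connectedness modulo $K$) is exactly the leverage one needs. However, the specific extremal function you propose, maximizing $|V(e')\cap V(G_e)|=|V(e')|$, does not by itself dispose of \emph{trivial} local bridges, i.e.\ chords of $e'$. A chord $ab\in E(G_e)\setminus E(e')$ with $a,b\in V(e')$ has only two attachments, so your three-attachment argument does not apply, and rerouting $e'$ through the single edge $ab$ shortens rather than lengthens the path. You flag this case but do not resolve it; the resolution genuinely requires a different idea (for instance, using the chord to shortcut $e'$ and then arguing via $3$-connectedness modulo $K$ that the abandoned segment $e'(a,b)$ is absorbed into a non-local $K'$-bridge through some non-local $K$-bridge attached on $e(a,b)$, or choosing a more refined extremal quantity than path length). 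A second point that deserves more care is your claim that a local $K'$-bridge on $e'$ ``must live entirely inside $G_e$'': once several branches are replaced simultaneously, vertices of $V(e)\setminus V(e')$ can merge with non-local $K$-bridges, and one has to check that the resulting $K'$-bridge is still non-local---which is true, but uses that branch vertices are preserved and that every non-local $K$-bridge already has an attachment at a branch vertex or on the interior of a second branch.
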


Now, we are ready to prove that if $G/xy$ is non-planar, then there is a Kuratowski subgraph in $G$
with a K-graph disjoint from $x$ and $y$.

\begin{lemma}
  \label{lm-kuratowski-pinch}
  Let $G$ be a non-planar graph and $x, y \in V(G)$.
  If $G /xy$ is non-planar, then $G$ contains a K-graph disjoint from $x$ and $y$.
\end{lemma}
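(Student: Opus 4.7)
The plan is to leverage the non-planarity of $G/xy$ via Kuratowski's theorem and transfer the resulting structure back to $G$. Since $G/xy$ is non-planar, it contains a Kuratowski subgraph $K^*$, namely a subdivision of $K_5$ or $K_{3,3}$. My first step is to extract a subdivision $L_0 \subseteq K^*$ of $K_4$ or $K_{2,3}$ that avoids the contracted vertex $v_{xy}$. Such an $L_0$ always exists: if $v_{xy}$ is a branch vertex of $K^*$, take $L_0 := K^* - v_{xy}$, which is a subdivision of $K_4$ (when $K^*$ is a $K_5$-subdivision) or of $K_{2,3}$ (when $K^*$ is a $K_{3,3}$-subdivision); if $v_{xy}$ is interior to a branch $B$ of $K^*$, take a K-graph inside $K^* - B^{\circ}$, using that $K_5 - e$ contains a $K_4$-subdivision and $K_{3,3} - e$ contains a $K_{2,3}$-subdivision; and if $v_{xy} \notin V(K^*)$, any K-graph of $K^*$ works.

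Because $L_0$ uses no vertex or edge incident to $v_{xy}$, it lifts unambiguously to a subgraph $L \subseteq G$ that is a $K_4$- or $K_{2,3}$-subdivision disjoint from $\{x,y\}$. It remains to show that $L$ extends to a Kuratowski subgraph of $G$ and hence qualifies as a K-graph. Let $K^*_G \subseteq G$ be the lift of $K^*$, in which each edge $v_{xy} z$ of $K^*$ becomes either $xz$ or $yz$ in $G$. In the easy sub-case, the edges of $K^*$ at $v_{xy}$ can all be routed (by the choice of lift) to a single vertex of $\{x,y\}$; then $K^*_G$ is a subdivision of the same Kuratowski graph as $K^*$, and we are done since $L \subseteq K^*_G$.

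The main obstacle is the \emph{forced split} case: for every choice of lift, some edges at $v_{xy}$ land on $x$ and others on $y$ (this occurs precisely when some neighbor of $v_{xy}$ in $K^*$ is adjacent only to $x$ in $G$ and another only to $y$). Then $K^*_G$ is not itself a Kuratowski subgraph. The plan is to repair it by adjoining an $xy$-path $P$ in $G$ (the edge $xy$ itself if $xy\in E(G)$, otherwise a path in $G$ chosen internally disjoint from $L$) so that the broken branch or broken star through $v_{xy}$ is reconnected through $P$, producing a Kuratowski subgraph of $G$ containing $L$. The trickiest sub-case is when $K^*$ is a $K_5$-subdivision, $v_{xy}$ is a branch vertex, and the four branches at $v_{xy}$ split $2{:}2$ between $x$ and $y$; there the $K_4$-subdivision $L$ cannot be extended through a single $xy$-path alone, so one instead extracts an alternative K-graph (a $K_{2,3}$-subdivision) coming from a rerouted Kuratowski subgraph of $G/xy$ in which $v_{xy}$ is moved from being a branch vertex to being an interior vertex of a branch. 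The existence of such a rerouting, and hence of the desired K-graph, is where the hypothesis that $G$ itself is non-planar enters, since it provides the extra paths needed to absorb one of the two parts of the split into the body of the rerouted Kuratowski subgraph.
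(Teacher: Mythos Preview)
Your outline is a natural first approach, and the opening steps (extract a $K_4$- or $K_{2,3}$-subdivision $L_0$ from a Kuratowski subgraph of $G/xy$ avoiding $v_{xy}$, then lift it to $L\subseteq G$) are fine. The difficulty, as you recognise, is showing that $L$ extends to a Kuratowski subgraph of $G$, and here the argument breaks down.

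First, even in the ``non-forced'' cases you rely on an $xy$-path in $G$ internally disjoint from $L$, but you do not establish that such a path exists; nothing in the hypotheses guarantees it, and routing through $L$ would destroy the containment $L\subseteq K^*_G\cup P$ you need. Second, and more seriously, in the $2{:}2$ split case for a $K_5$-subdivision your repair is only a sketch: the graph consisting of the $K_4$ on $a,b,c,d$, the paths $x\!-\!a$, $x\!-\!b$, $y\!-\!c$, $y\!-\!d$, and an $xy$-path is planar (contract the pendant structure and you get back $K_4$), so no Kuratowski subgraph of $G$ containing $L$ arises this way. Your proposed fix --- ``reroute $K^*$ so that $v_{xy}$ is interior to a branch'' --- is exactly the crux of the lemma, and you give no mechanism for producing such a rerouting. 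Invoking non-planarity of $G$ in the abstract does not help: a Kuratowski subgraph of $G$ may well have both $x$ and $y$ as branch vertices, in which case deleting them leaves at best a triangle or a path, not a K-graph.

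The paper's proof avoids these issues by arguing indirectly. It takes a minimal counterexample, first reduces to the situation where $G-x$ and $G-y$ are both planar, then shows that the $L$-bridges $B_x\ni x$ and $B_y\ni y$ are distinct. After eliminating local bridges via Lemma~\ref{lm-no-local-bridges}, each remaining $L'$-bridge (except possibly $B_x',B_y'$) embeds in a unique face of the planar drawing of $L'$; since the attachment sets of $B_x'$ and $B_y'$ cannot share a face (their union does not lie on one face of $L'$), all bridges can be embedded simultaneously, contradicting non-planarity of $G$. A small extra case handles the $K_{2,3}$ situation when the degree-3 vertices of $L$ are adjacent. This bridge-counting argument is what replaces the explicit ``rerouting'' you were looking for.
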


\begin{proof}
  Suppose that the conclusion of the lemma is false.
  Let $G$ be a counterexample with $|V(G)| + |E(G)|$ minimum.
  It is easy to see that $G$ is connected.
  If $G - x$ is non-planar, then by Theorem~\ref{th-kuratowski}, $G - x$ contains a Kuratowski graph $K$ 
  and thus $K - y$ contains a K-graph in $G$ that is disjoint from $x$ and $y$.
  Hence $G - x$ is planar. Similarly, $G - y$ is planar.

  Let $K$ be a Kuratowski subgraph in $G /xy$ and $L$ a K-graph contained in $K - v_{xy}$.
  Let $B_x$ and $B_y$ be the $L$-bridges of $G$ containing $x$ and $y$, respectively.
  Necessarily, $B_x$ and $B_y$ are different $L$-bridges of $G$ since otherwise $L$ is a K-graph in $G$ disjoint from $x$ and $y$.
  We aim to get rid of the local $L$-bridges by applying Lemma~\ref{lm-no-local-bridges}
  but also preserve the property that the graph is a K-graph in $G/xy$ that is disjoint from $x$ and $y$.
  In order to achieve that, we consider the graph $\hat{G} = G - B_x^\circ - B_y^\circ - w_1w_2$ in the case when $L$ is isomorphic to $K_{2,3}$,
  $w_1, w_2$ are the vertices of degree 3 in $L$, and $w_1w_2 \in E(G)$.
  Otherwise, let $\hat{G} = G - B_x^\circ - B_y^\circ$.

  If $\hat{G}$ is not 3-connected modulo $L$, then there is a (minimal) vertex set $U$ with $|U| \le 2$ such that a $U$-bridge $C$  does not
  contain any branch vertex (in $C^\circ$). 
  If $|U| \le 1$, then $C$ is  a block of $\hat{G}$.
  Since genus is additive over blocks (see~\cite{battle-1962}), the block $C$ is planar and its removal from $G$ yields a subgraph of $G$ that satisfies
  the assumptions of the lemma. 
  This is a contradiction with the choice of $G$ being minimal.
  Thus $U$ contains exactly two vertices, $u$ and $v$, and there is a path in $C$
  that connects $u$ and $v$. Let $G'$ be the graph obtained from $G$ by contracting $C$ into a single edge $uv$.
  Since $C$ does not contain $x$ and $y$, if $C + uv$ is non-planar, then $C$ contains a K-graph disjoint from $x$ and $y$ in $G$.
  Hence $C + uv$ is planar and Lemma~\ref{lm-planar-patch} gives that $G'$ is non-planar.
  It is not difficult to see that $G' /xy$ is also non-planar. 
  By the choice of $G$, there is a K-graph $L'$ in $G'$ disjoint from $x$ and $y$.
  Since the edge $uv$ in $G'$ can be replaced in $G$ by a path in $C$,
  $L'$ induces in a straightforward way a K-graph in $G$ disjoint from $x$ and $y$.

  Therefore, we may assume that $\hat{G}$ is 3-connected modulo $L$.
  By Lemma~\ref{lm-no-local-bridges}, there exists  a subgraph $L'$ of $\hat{G}$ homeomorphic to $L$
  that has no local bridges, and has the same branch vertices as $K'$ and also satisfies property (b) of Lemma~\ref{lm-no-local-bridges}.
  Note that, since $K_{2,3}$ and $K_4$ are uniquely embeddable in the plane, $L'$ has a unique planar embedding $\Pi$.
  Let $B_x'$ and $B_y'$ be the $L'$-bridges in $G$ containing $x$ and $y$, respectively.
  By using (b) of Lemma~\ref{lm-no-local-bridges}, it is not difficult to check that $L'$ is still a K-graph in $G/xy$.
  It follows that $B_x'$ and $B_y'$ are different $L'$-bridges in $G$.

\begin{figure}
  \centering
  \includegraphics{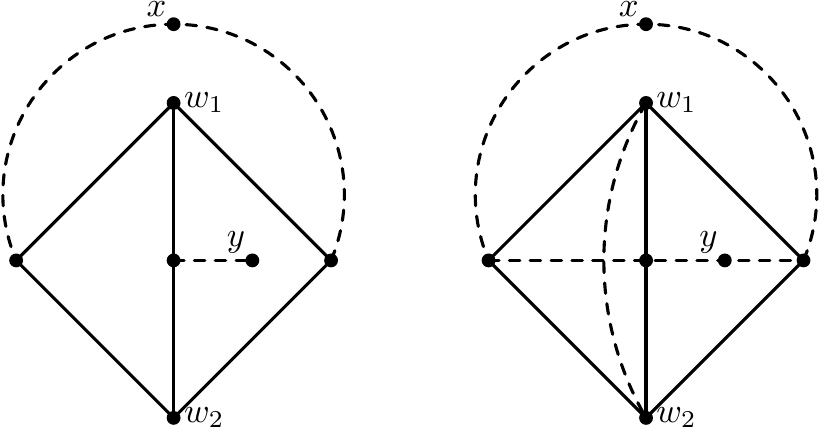}
  \caption{A case in the proof of Lemma~\ref{lm-kuratowski-pinch}.}
  \label{fg-proof-case}
\end{figure}

  \begin{casesblock}
    \case{$L'$ is a subdivision of $K_4$ or $w_1w_2 \not\in E(G)$}
    
    Since $G - B_x'^\circ$ and $G - B_y'^\circ$ are planar, each $L'$-bridge can be embedded into some $\Pi$-face.
    Since only $B_x'$ and $B_y'$ can be local $L'$-bridges in $G$, each other $L'$-bridge in $G$ embeds into a unique $\Pi$-face.
    Since the vertices of the union of the attachments of $B_x'$ and $B_y'$ do not lie on a single $\Pi$-face, the bridges $B_x'$ and $B_y'$ embed into different $\Pi$-faces.
    We conclude that each $L'$-bridge in $G$ can be assigned a $\Pi$-face such that
    all bridges assigned to a single $\Pi$-face can be embedded there simultaneously.
    Hence $G$ is planar --- a contradiction.
    
    \case{$L$ is a subdivision of $K_{2,3}$ and $w_1w_2 \in E(G)$}
    Consider the graph $G' = G - w_1w_2$.
    Since $G'$ is a subgraph of $G$ and  $G'/xy$ is non-planar, $G'$ is planar by the choice of $G$ .
    Since the planar embedding of $G'$ cannot be extended into a planar embedding of $G$ by adding the edge $w_1w_2$
    into one of the three $\Pi$-faces, there are three paths $P_1, P_2, P_3$ that connect the three pairs of open branches of $L'$, respectively (see Fig.~\ref{fg-proof-case}).
    Let $L''$ be the subgraph of $G$ that consists of $w_1w_2$, the path $P_i$ that is embedded in the $\Pi$-face containing neither $x$ nor $y$
    and the two branches of $L'$ that $P_i$ connects to.
    It is easy to see that $L''$ forms a K-graph in $G$ that is disjoint from $x$ and $y$, a contradiction.
  \end{casesblock}%
\end{proof}

Lemma~\ref{lm-kuratowski-pinch} leads to the following dichotomy of graphs is $\F_{xy}^1$.

\begin{lemma}
\label{lm-k-graph}
  Let $G$ be a graph in $\F_{xy}^1$. Then one of the following is true.
  \begin{enumerate}[\rm(i)]
  \item 
    $G$ is a split of a Kuratowski graph with $x$ and $y$ being the two vertices resulting after the split (see Fig.~\ref{fg-split})
    or $G$ is a Kuratowski graph plus one or two isolated vertices that are terminals.
  \item
    $G/xy$ is planar.
  \end{enumerate}
\end{lemma}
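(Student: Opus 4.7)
The plan is to prove the contrapositive: assuming $G/xy$ is non-planar (so alternative (ii) fails), I derive (i). First, $G$ itself must be non-planar, for otherwise $G$ embeds in $\SSS_0$ and lies in $\A_{xy}^1$, contradicting $G \in \F_{xy}^1$. With both $G$ and $G/xy$ non-planar, Lemma~\ref{lm-kuratowski-pinch} produces a K-graph $L$ in $G$ disjoint from $\{x,y\}$; I extend $L$ inside $G$ to a Kuratowski subgraph $K \ss G$.

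The engine of the proof is the minimality of $G$ combined with Corollary~\ref{cr-planar-contraction}: for any edge $e \in E(G) \sm E(K)$ the graph $G-e$ is non-planar (it still contains $K$) and $G-e \in \A_{xy}^1$, so $(G-e)/xy$ must be planar; in particular $K/xy$, being a subgraph of $(G-e)/xy$, is planar. An analogous contraction-minor argument along an edge incident to a terminal that happens to be a subdivision vertex of $K$ produces a proper minor that still contains a Kuratowski minor and still violates Corollary~\ref{cr-planar-contraction}; ruling this out, together with the degree-at-least-$3$ condition on non-terminal vertices recorded in Section~\ref{sc-general}, forces $K$ to be an exact Kuratowski graph and $V(G) \sm V(K) \ss \{x, y\}$.

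I then split on $|V(K) \cap \{x,y\}|$. If at most one of $x, y$ lies in $V(K)$, then $K/xy$ is isomorphic to $K$ and hence non-planar; the engine argument then rules out any edge of $G$ outside $K$, so $G = K \cup \{x, y\}_{\text{iso}}$, a Kuratowski graph together with one or two isolated terminals, which is the second alternative of (i). If instead $\{x,y\} \ss V(K)$, then $K$ is an exact Kuratowski graph with $x, y$ among its vertices and $K/xy$ is planar; for $G/xy$ to remain non-planar, extra edges in $E(G) \sm E(K)$ must therefore exist, and each is essential for the non-planarity of $G/xy$. A direct case analysis of the possible pairs $(K, \{x,y\})$ with $K \in \{K_5, K_{3,3}\}$, together with the minimal collections of extra edges that render $G/xy$ non-planar, identifies $G$ as exactly the graph obtained from a Kuratowski graph by splitting a single vertex into the adjacent pair $x, y$, which is the first alternative of (i).

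The main obstacle I expect is this last case analysis: verifying that the extra edges, together with $K$ and the pair $\{x,y\}$, glue into precisely the pattern of a vertex split of a Kuratowski graph, and that no other configuration survives. This requires exploring the Kuratowski subdivisions $K$ with $\{x,y\} \ss V(K)$ and using both deletion and contraction minors via Corollary~\ref{cr-planar-contraction} to rule out superfluous edges and any $K$-bridges attached at $x$ or $y$.
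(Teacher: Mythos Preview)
Your overall strategy matches the paper's: assume $G/xy$ is non-planar, invoke Lemma~\ref{lm-kuratowski-pinch} to obtain a K-graph $L$ disjoint from $\{x,y\}$ inside a Kuratowski subgraph $K$, and exploit minimality. The difference is in the invariant you carry. The paper's engine is: any minor operation on $G$ that leaves \emph{some} K-graph disjoint from $\{x,y\}$ intact yields a minor whose $/xy$ is still non-planar, contradicting Corollary~\ref{cr-planar-contraction}. This immediately gives $E(G)=E(K)$ and forces every degree-$2$ vertex of $K$ to be a terminal adjacent to a branch vertex; the paper then finishes with ``it is easy to see that $G$ satisfies (i)''.

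Your engine is weaker: from an edge $e\notin E(K)$ you deduce only that $K/xy$ is planar, and you then split on $|V(K)\cap\{x,y\}|$. The second case is where the plan goes astray. A \emph{Kuratowski subgraph} in the paper's sense is a minimal subgraph carrying a Kuratowski \emph{minor}; it need not be a subdivision of $K_5$ or $K_{3,3}$. In particular, each split of a Kuratowski graph (with the edge $xy$) is itself such a Kuratowski subgraph with no degree-$2$ vertices, so your contraction step ``ruling out subdivision vertices'' does not force $K\in\{K_5,K_{3,3}\}$. For these splits one has $K=G$, both terminals lie in $V(K)$, and $K/xy$ is \emph{non}-planar (it equals $K_5$ or $K_{3,3}$), contrary to the premise of your second case. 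So the ``direct case analysis of pairs $(K,\{x,y\})$ plus extra edges'' you anticipate is aimed at a configuration that does not occur, while the genuine split outcomes arise because $K$ itself is already a split. Conversely, if $K$ really were an exact $K_5$ or $K_{3,3}$ containing both terminals, no $K_4$- or $K_{2,3}$-subdivision $L\subseteq K$ disjoint from $\{x,y\}$ could exist, so Lemma~\ref{lm-kuratowski-pinch} would never hand you that $K$.

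If you switch to the paper's invariant and track the K-graph $L$ rather than $K$, the case split evaporates and you recover the short argument the paper gives.
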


\begin{figure}
  \centering
  \includegraphics{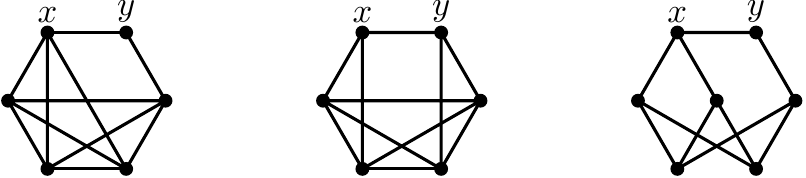}
  \caption{Splits of Kuratowski graphs.}
  \label{fg-split}
\end{figure}

\begin{proof}
  Suppose that $G$ does not satisfy (ii). 
  By Lemma~\ref{lm-kuratowski-pinch}, there is a Kuratowski subgraph $K$ in $G$ with a K-graph $L$ disjoint from $x$ and $y$.
  If there is an edge $e$ and a minor operation $\minor$ such that $G \minor e$ still contains a K-graph disjoint from $x$ and $y$,
  then $(G \minor e) /xy$ is non-planar and thus $G \minor e \not\in A_{xy}^1$ by Corollary~\ref{cr-planar-contraction}.
  Hence $E(G) = E(K)$. If $e$ is a subdivided edge of $K$, then $G / e$ still contains a K-graph disjoint from $x$ and $y$ 
  unless a terminal and a branch vertex of $K$ are the endvertices of $e$.
  Now it is easy to see that $G$ satisfies (i).
\end{proof}

\section{$XY$-labelled graphs}

Let $G$ be a graph with terminals $x$ and $y$. 
To investigate graphs in $G \in \F_{xy}^1$ where $G /xy$ is planar, we study the graph $H = G - x - y$.
Let us label each vertex of $H$ by the label $X$ ($Y$) if it is adjacent to $x$ ($y$) in $G$.
Thus each vertex of $H$ is given up to two labels.
Let $\lab(v)$ denote the set of labels given to the vertex $v$ of $H$.
A vertex $v$ is \df{labelled} if $\lab(v)$ is non-empty. 
The graph $H$ together with the labels carries all information about $G$. 
Let us call $H$ an \df{$XY$-labelled graph}.
The notion of a minor of a graph is extended to $XY$-labelled graphs naturally: 
an $XY$-labelled graph $H_1$ is a minor of an $XY$-labelled graph $H_2$
if the graph with terminals corresponding to $H_1$ is a minor of the graph with terminals corresponding to $H_2$.
For example, the deletion of a label is a minor operation that corresponds to an edge deletion and, 
when contracting an edge $uv$ in an $XY$-labelled graph, 
the resulting vertex is labelled by $\lab(u) \cup \lab(v)$.

Another useful representation of $G$ is as follows.
Consider the multigraph $\hat{H}$ and  the vertex $v_{xy}$ obtained by identification of $x$ and $y$ in $G$ (in contrast to the simple graph $G /xy$ used in the previous sections).
Label each edge $e$ of $\hat{H}$ incident to $v_{xy}$ by the label $X$ ($Y$) if the edge was incident to $x$ ($y$) in $G$.
Let $\Pi$ be a planar embedding of $\hat{H}$. 
The local rotation around $v_{xy}$ gives a cyclic sequence $S$ of labels that appear on the edges incident with $v_{xy}$.
Call $S$ a \df{label sequence} of $\hat{H}$.
A \df{label transition} in a label sequence is a pair of (cyclically) consecutive labels that are different.
The \df{number of transitions} $\trans(Q)$ of $S$ is the number of label transitions in $S$.
In the case when $S$ contains only two different labels, $\trans(Q)$ is a multiple of 2.
Thus we say that a label sequence $S$ is \df{$k$-alternating} if $\trans(Q) = 2k$.
A planar embedding of $\hat{H}$ is \df{$k$-alternating} if the induced label sequence is $k$-alternating
and
$H$ is called \df{$k$-alternating} if $\hat{H}$ admits a $k$-alternating embedding in the plane.
Note that Lemma~\ref{cr-alt-equiv} implies that, if $H$ is 2-alternating, then the corresponding graph $G$ is in $\A_{xy}^1$.


When $H$ is connected, a planar embedding of $\hat{H}$ induces a planar embedding of $H$
with a special face $W$ in which $v_{xy}$ is embedded. 
Call the cyclic sequence of vertices of $W$ (with some possibly appearing more than once) a \df{boundary} of $H$.
If $H$ is 2-connected, then $W$ is a  cycle of $H$ (see~\cite[Thm.~2.2.3]{mohar-book}).
To understand when a planar embedding of $\hat{H}$ induces a 2-alternating label sequence,
we study the possible boundaries of $H$. 
If $M$ is a block of $H$ that is not an edge, then a boundary of $H$ induces a \df{boundary cycle} in $M$.

A sequence $R = v_1, \ldots, v_k$ of consecutive vertices on a boundary $Q$ is called an \df{$X$-block} in $Q$
if no vertices in $R$ except possibly the endvertices $v_1$ and $v_k$ are labelled with $Y$.
Define a $Y$-block similarly. 

The following lemma states the observation that, if two $X$-blocks contain all vertices that are labelled $X$,
then it is easy to construct a 2-alternating embedding of $\hat{H}$.
In this case, we say that the labels $X$ are \df{covered} by the two $X$-blocks.

\begin{lemma}
\label{lm-2-blocks}
  Let $H$ be an $XY$-labelled graph, $Q$ a boundary of $H$, and $A \in \{X, Y\}$.
  If the $A$-labelled vertices of $H$ are covered by two $A$-blocks in $Q$,
  then $H$ is 2-alternating.
\end{lemma}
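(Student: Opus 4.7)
The plan is to build the required planar embedding of $\hat H$ directly from a planar embedding $\Pi$ of $H$ in which $Q$ bounds a face $W$. Without loss of generality I assume $A = X$, and I write the cyclic boundary as $Q = R_1 \cdot S_1 \cdot R_2 \cdot S_2$, where $R_1$ and $R_2$ are the two $X$-blocks covering every $X$-labelled vertex, and $S_1, S_2$ are the complementary arcs of $Q$.

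I would place $v_{xy}$ in the interior of $W$ and add one edge from $v_{xy}$ to each label of each labelled vertex of $H$. For every $X$-labelled vertex $v$, the covering hypothesis guarantees an occurrence of $v$ on $R_1$ or $R_2$; the corresponding $X$-edge is drawn from $v_{xy}$ to that occurrence through the face $W$. For every $Y$-labelled vertex $w$, the defining property of an $X$-block forbids $w$ from occurring strictly in the interior of $R_1$ or $R_2$; hence $w$ has an occurrence either inside $S_1$ or $S_2$, or only at a junction endpoint shared between some $R_i$ and an adjacent $S_j$. In either case the $Y$-edge can be routed from $v_{xy}$ to such an occurrence, and at a junction endpoint the edge is inserted into the sector of $w$'s local rotation that faces $S_j$.

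Since the endpoints of these new edges on $Q$ are encountered in cyclic order along the boundary walk, all of them can be drawn inside $W$ without crossings, and the resulting $\hat\Pi$ is a planar embedding of $\hat H$. Reading the local rotation around $v_{xy}$ cyclically then produces the label pattern consisting of the $X$-edges coming from $R_1$, the $Y$-edges from $S_1$, the $X$-edges from $R_2$, and the $Y$-edges from $S_2$, yielding at most four label transitions; this is precisely what is required for $\hat\Pi$ to be a 2-alternating embedding.

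The step I expect to be the main obstacle is the routing of $Y$-labels at vertices whose only occurrence on $Q$ lies at a junction endpoint of some $X$-block $R_i$. The local rotation at such a vertex $w$ must be chosen so that the new $Y$-edge enters the sector on the $S_j$-side, and one has to verify that the $X$-block property indeed leaves this sector available (no $Y$-labelled neighbour forces it shut from the $R_i$ side) and that making this local choice at every such $w$ simultaneously respects planarity of the whole embedding. Once this is checked, the at-most-four-transitions count is immediate from the block decomposition of $Q$, and the lemma follows.
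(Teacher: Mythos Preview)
Your argument is correct and is essentially the construction the paper has in mind; in fact the paper does not give a proof at all, presenting the lemma as an ``observation''. So you are supplying the details the authors omitted, via exactly the intended route: embed $v_{xy}$ in the face $W$ bounded by $Q$, route each $X$-edge to an occurrence in $R_1\cup R_2$ and each $Y$-edge to an occurrence in $S_1\cup S_2$ (or a junction endpoint), and read off at most four transitions.

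The step you flag as the ``main obstacle'' is not actually an obstacle. If $w$ is a junction endpoint carrying both labels, the $X$-edge and the $Y$-edge from $v_{xy}$ to $w$ are parallel edges bounding a digon; in any planar drawing they are consecutive both at $v_{xy}$ and at $w$, and swapping them is always allowed (it just relabels the two sides of the digon). Hence you may freely place the $X$-edge on the $R_i$ side and the $Y$-edge on the $S_j$ side. If $w$ carries only the label $Y$, there is a single edge and you simply declare it to belong to the $S_j$ group in the count around $v_{xy}$; no choice of sector at $w$ is needed, since there is only one angular sector of $W$ at that occurrence of $w$ anyway. With this remark your transition count goes through and the proof is complete. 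One small cosmetic point: the paper defines ``$k$-alternating'' as exactly $2k$ transitions, so strictly your construction shows $H$ is $k$-alternating for some $k\le 2$; but this is of course what is wanted, and the paper uses the term in this looser sense throughout.
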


For $A \in \{X, Y\}$, an induced subgraph $H'$ of $H$ \df{contains the label $A$} if there is a vertex in $H'$ labelled $A$.
Let $S = A_1\ldots A_k$ be a label sequence. 
Here we consider $S$ as a linear label sequence as opposed to cyclic.
Let $R$ be a subsequence of a boundary of $H$. We say that $R$ \df{contains the label sequence $S$} if there
are distinct vertices $v_1, \ldots, v_k$ that appear in $R$ in this order (or the reverse order)
and $v_i$ is labelled $A_i$ for $i=1, \ldots, k$.
We say that $H'$ \df{contains the label sequence $S$} if for every boundary $Q$ of $H$, the subsequence of $Q$ induced by $V(H')$
contains the label sequence $S$.
Let $B$ be a block of $H$ and $v$ a vertex of $B$. We say that label $A$ is \df{attached} to $B$ at $v$ if
either $v$ is labelled $A$ or there is a $v$-bridge in $H$ not containing $B$ that contains $A$.

\begin{lemma}
  \label{lm-alt-boundary}
  Let $H$ be an $XY$-labelled graph such that at most four vertices of $H$ have both labels $X$ and $Y$.
  If $H$ is not 2-alternating,
  then $H$ contains the label sequence $XYXYXY$.
\end{lemma}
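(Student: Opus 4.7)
I plan to prove the contrapositive: fix any boundary $Q$ of $H$ that does not contain the label sequence $XYXYXY$, and show that $H$ is then $2$-alternating. By Lemma~\ref{lm-2-blocks}, it suffices to exhibit a covering of the $X$-labelled vertices by two $X$-blocks of $Q$, or, by symmetry, a covering of the $Y$-labelled vertices by two $Y$-blocks.

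Classify each labelled vertex on $Q$ as $X$-only, $Y$-only, or doubly labelled; write $P$ for the set of doubly labelled vertices, with $|P|\leq 4$ by hypothesis. The $Y$-labelled vertices (that is, $Y$-only together with doubly labelled) partition $Q$ into maximal arcs whose interiors contain no $Y$-label; I will call these the $Y$-gaps. An $X$-block is precisely a sub-arc of some $Y$-gap together with some subset of its two bounding $Y$-labelled vertices placed at the endpoints. Covering all $X$-labelled vertices with two $X$-blocks therefore amounts to choosing two $Y$-gaps whose interiors together contain every $X$-only vertex and whose four endpoints together contain every doubly labelled vertex; the symmetric statement, with $X$ and $Y$ swapped, characterises a two-$Y$-block covering.

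Assume for contradiction that neither covering exists. I first dispose of the easy subcase in which $X$-only vertices appear in at least three distinct $Y$-gaps: choosing one such vertex per gap, together with a $Y$-labelled separator between consecutive ones, gives six distinct vertices of $Q$ realising $XYXYXY$; the analogous case with $Y$-only vertices in at least three $X$-gaps is symmetric. Thus I may assume the $X$-only vertices lie in at most two $Y$-gaps and the $Y$-only vertices in at most two $X$-gaps, and both coverings still fail---necessarily because of the four-endpoint constraint on doubly labelled vertices. The bound $|P|\leq 4$ is now decisive: the $X$-side failure produces a doubly labelled $p^{\ast}$ whose two adjacent $Y$-gaps are free of $X$-only vertices, and the $Y$-side failure produces (possibly a different) doubly labelled $p^{\ast\ast}$ whose two adjacent $X$-gaps are free of $Y$-only vertices. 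Combining the cyclic positions of $p^{\ast}$, $p^{\ast\ast}$, and the forced singly-labelled witnesses inside the distinguished gaps, I extract six distinct vertices on $Q$ whose labels realise $X,Y,X,Y,X,Y$, contradicting the assumption.

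The main obstacle is this final extraction. Doubly labelled vertices can play either role in an $XYXYXY$ subsequence, simultaneously helping to form such a sequence and complicating the non-coverability hypothesis. Since the bound $|P|\leq 4$ is tight against the four-endpoint capacity of two blocks, the analysis must carefully track which doubly labelled vertex is missed by every candidate pair of gaps and then fuse the $X$-side and $Y$-side failures to collect six alternating witnesses in cyclic order. Once this contradiction is secured, Lemma~\ref{lm-2-blocks} furnishes the desired $2$-alternating embedding of $\hat{H}$, completing the proof.
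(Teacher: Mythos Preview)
Your approach is genuinely different from the paper's, and the high-level strategy is sound, but the proof is incomplete precisely at the point you yourself flag as ``the main obstacle.''

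The paper does not analyse gaps at all. Instead it passes to the cyclic subsequence $R$ of labelled vertices and runs a minimal-counterexample reduction on the total number of labels: whenever two consecutive vertices $u,v$ both carry a label $A$ and $v$ is singly labelled, delete $A$ from $u$. After this reduction, any singly labelled vertex forces its neighbour to be singly labelled with the opposite label, so the reduced $R$ is either entirely doubly labelled (hence at most four vertices, by hypothesis) or entirely singly labelled with strictly alternating labels (hence at most four vertices, since six would already exhibit $XYXYXY$ and five is excluded by parity). In either case a $2$-alternating arrangement is immediate. This reduction is short and avoids any case analysis on the positions of doubly labelled vertices.

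Your route via $X$-blocks and $Y$-gaps is viable, and the first reduction---$X$-only vertices in at least three $Y$-gaps already yields $XYXYXY$---is correct. But once you are down to ``$X$-only vertices in at most two $Y$-gaps and symmetrically, yet both coverings fail,'' the work has not actually been done. The assertion that the $X$-side failure produces a doubly labelled $p^\ast$ whose two neighbouring $Y$-gaps are free of $X$-only vertices itself requires splitting on whether zero, one, or two $Y$-gaps are forced, and on whether the forced gaps share an endpoint or have $Y$-only endpoints. The subsequent ``fusion'' of $p^\ast$ and $p^{\ast\ast}$ into six cyclically ordered witnesses is not automatic either: the relative cyclic positions of $p^\ast$, $p^{\ast\ast}$, the forced gaps on each side, and the remaining doubly labelled vertices lead to several distinct configurations that must each be checked. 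You have described what needs to happen but not carried it out, and this is exactly where the difficulty of the lemma sits. The paper's reduction argument buys you a way around this entire case analysis; if you persist with the gap approach you should expect a noticeably longer and more delicate finish.
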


\begin{proof}
  Suppose that $H$ is not 2-alternating and let $Q$ be a boundary of $H$.
  Let $R$ be a subsequence of $Q$ with no unlabelled vertices such that each labelled vertex appears in $R$ exactly once.
  A stronger claim is proved instead. If $R$ does not contain the label sequence $XYXYXY$,
  then the labels of vertices in $R$ can be arranged in the order given by $R$ to obtain a 2-alternating sequence of labels.
  Suppose that this is not true and choose a counter-example $R$ with minimum total number of labels.

  Suppose there are cyclically consecutive vertices $u$ and $v$ in $R$ such that both $u$  and $v$ have label $A$ and $v$ has only one label.
  By deleting $A$ from $u$ we obtain a sequence $R'$ with smaller total number of labels.
  By the construction of $R'$, $R'$ does not contain the label sequence $XYXYXY$.
  Thus there is a 2-alternating label sequence $S'$ of labels in $R'$. By inserting the label $A$ before the occurence of $A$ at $v$,
  we obtain a valid 2-alternating label sequence for $R$.
  Therefore, every two consecutive vertices in $R$ have either distinct labels or both labels.

  Two cases remain: Either $R$ contains at most four labelled vertices, all with both labels, or
  there are at most four vertices that have alternating labels 
  (six vertices give the label sequence $XYXYXY$ and five vertices are not possible because of parity).
  In both cases, we see immediately that the labels in $R$ can be arranged into a 2-alternating label sequence.
\end{proof}

For graphs in $\F_{xy}^1$, Lemma~\ref{lm-planar-patch} gives the following result.

\begin{corollary}
  \label{cr-planar-patch}
  Let $G \in \F_{xy}^1$ and let $\{u,v\}$ be a 2-vertex-cut in $G$.
  If $C$ is a non-trivial $uv$-bridge such that $C + uv$ is planar, then
  $C - u - v$ contains a terminal.
\end{corollary}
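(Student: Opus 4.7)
We argue by contradiction: suppose $C - u - v$ contains no terminal. Let $D = G - C^\circ$, so $V(D) \cap V(C) = \{u,v\}$ and both terminals lie in $D$.

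The first step is to produce a proper minor $G'$ of $G$ equal to $D + uv$. Since $C^\circ$ contains no terminal, every internal vertex of any $u$-$v$ path in $C$ is non-terminal, so we may contract a fixed such path edge-by-edge down to a single edge $uv$ and delete the remaining edges of $C^\circ$; no step identifies two terminals. After ignoring isolated non-terminal vertices (which do not influence membership in $\A_{xy}^1$), the resulting minor is $G' = D + uv$. Because $C$ is non-trivial, $G'$ is a proper minor of $G$, so $G' \in \A_{xy}^1$ by the minimality of $G$. Moreover $G'$ is non-planar: otherwise, planarity of $C + uv$ together with the argument of Lemma~\ref{lm-planar-patch} (whose proof applies to any $2$-sum once the edge joining the separating pair is present, not just to $xy$-sums) would yield a planar embedding of $G$, contradicting $G \not\in \A_{xy}^1$.

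By Lemma~\ref{lm-alt-equiv}(iv) applied to $G'$, the graph $(G')^*$ embeds in the torus. Writing $D^* = D \cup (K_5 - xy)$ for the $xy$-sum of $D$ and $K_5 - xy$, one checks that $(G')^* = D^* + uv$. Now view $G^* = D^* \cup C$ as a $2$-sum at $\{u,v\}$; since $C + uv$ is planar and $D^* + uv$ embeds in the torus, the same extension principle of Lemma~\ref{lm-planar-patch} produces an embedding of $G^*$ in the torus. Lemma~\ref{lm-alt-equiv}(iv) applied back to $G$ (non-planar because $G \in \F_{xy}^1$) then forces $G \in \A_{xy}^1$, contradicting $G \in \F_{xy}^1$.

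The delicate point is the minor-reduction step when $u$ or $v$ happens to be a terminal, since we must never contract an edge joining two terminals. This is handled by contracting the chosen $u$-$v$ path inward from a terminal endpoint, using that all its internal vertices lie in $C^\circ$ and are therefore non-terminal; each contraction is then terminal/non-terminal or non-terminal/non-terminal, and we ultimately reach an edge $uv$ between the two attachment vertices as required.
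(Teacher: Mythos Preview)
Your proof is correct. The paper states this corollary as an immediate consequence of Lemma~\ref{lm-planar-patch} and gives no explicit argument; the intended proof is presumably the direct one: pass to the proper minor $G' = D + uv \in \A_{xy}^1$, take an $xy$-alternating toroidal embedding (or a planar embedding) of $G'$, and extend it to an embedding of $G$ in the same surface via Lemma~\ref{lm-planar-patch} applied at the cut $\{u,v\}$, observing that the alternating face can be preserved by inserting $C$ on the side of the edge $uv$ opposite that face. Your argument follows a slightly different route: rather than tracking the alternating face through the extension, you invoke Lemma~\ref{lm-alt-equiv}(iv) to convert the ``$xy$-alternating'' condition into a plain embeddability statement for $G^*$, note that $G^* = D^* \cup C$ is itself a $uv$-sum with $C + uv$ planar and $(D^*)^+ = (G')^*$ embeddable in the torus, and then apply Lemma~\ref{lm-planar-patch} verbatim. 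Both arguments rest on the same patching lemma at $\{u,v\}$; yours trades the small face-preservation check for an appeal to the $G^*$ characterization, which is tidy and avoids any case analysis about where $u$, $v$, and the alternating face sit relative to one another.
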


The following lemma describes the structure of a graphs in $\F_{xy}^1$ when the $XY$-labelled graph is disconnected.

\begin{figure}
  \centering
  \includegraphics{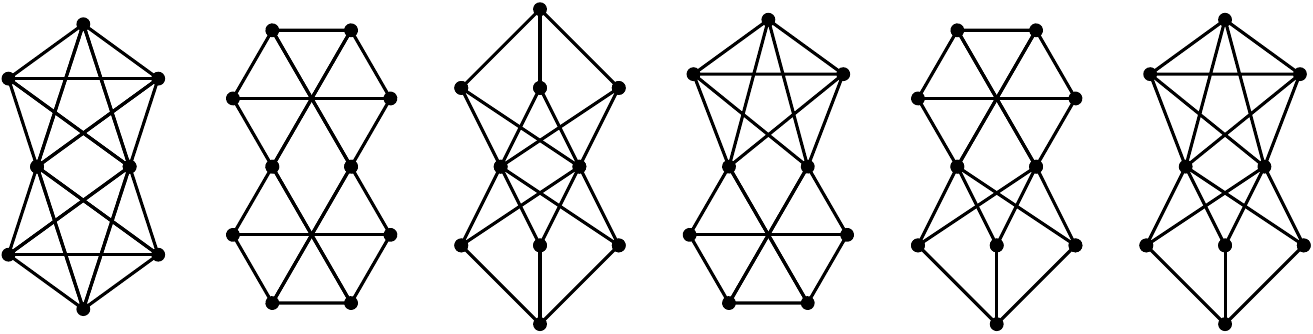}
  \caption{The two-sums of Kuratowski graphs.}
  \label{fg-kuratowski-sum}
\end{figure}

\begin{lemma}
\label{lm-alt-disconnected}
  Let $G$ be a graph in $\F_{xy}^1$ such that $G /xy$ is planar and let $H$ be the $XY$-labelled graph corresponding to $G$.
  If $H$ is disconnected, then 
  $G$ is an $xy$-sum of two Kuratowski graphs and $xy \not\in E(G)$ (this yields precisely six non-isomorphic graphs; see Fig.~\ref{fg-kuratowski-sum}).
\end{lemma}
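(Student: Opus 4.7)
The plan is to use disconnectedness of $H$ to express $G$ as an iterated $xy$-sum and then squeeze each summand down to a Kuratowski graph by minimality. Let $H_1,\ldots,H_t$ (with $t\ge 2$) be the components of $H$ and set $G_i=G[V(H_i)\cup\{x,y\}]$, so that $G$ is the iterated $xy$-sum of $G_1,\ldots,G_t$. Since planar graphs belong to $\A_{xy}^1$ and $G\notin\A_{xy}^1$, $G$ is non-planar. Since each non-trivial $\{x,y\}$-bridge $G_i$ has $G_i-x-y=H_i$ containing no terminal, Corollary~\ref{cr-planar-patch} forces each $G_i^+$ to be non-planar.

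To prove $t=2$, I would suppose $t\ge 3$ and contradict minimality. Because $\A_{xy}^1$ is minor-closed, the proper subgraph $G'$ obtained from $G$ by deleting all edges inside $H_4\cup\cdots\cup H_t$ (which equals the iterated $xy$-sum of $G_1,G_2,G_3$) still lies in $\A_{xy}^1$. By Corollary~\ref{cr-alt-equiv}(iii), cutting $G'$ at $x,y$ produces a planar graph $P'$ in which $x_1,y_1,x_2,y_2$ appear cyclically on a common face $W$. Adjoining the 4-cycle $C=x_1y_1x_2y_2$ across $W$ yields a planar graph in which the cut pieces $G_i^{\mathrm{cut}}$ become $C$-bridges. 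Because each $G_i^+$ is non-planar, every $G_i^{\mathrm{cut}}$ must have attachments on both of the arcs of $C$ separating $\{x_1,y_1\}$ from $\{x_2,y_2\}$; otherwise collapsing the split would yield a planar embedding of $G_i$ with $x$ and $y$ on a common face, permitting the addition of $xy$ and contradicting non-planarity of $G_i^+$. Hence every two of the three cut pieces overlap on $C$, giving a triangle in the bridge-overlap graph, which by Theorem~\ref{th-bridge-overlap} contradicts planarity. Therefore $t=2$.

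With $t=2$ established, the same mechanism handles the remaining two claims. For \emph{$G_i^+$ Kuratowski}: if $G_1^+$ properly contains a Kuratowski subgraph $K_1$, pick $e\in E(G_1^+)\setminus E(K_1)$. If $e\in E(G)$, then $G-e$ is still an $xy$-sum of two bridges with non-planar $^+$ (since $K_1\subseteq(G_1-e)^+$), and the three-bridge overlap argument applied to $(G-e)^*$, viewed as the $xy$-sum of $G_1-e$, $G_2$, and $K_5-xy$, shows $g((G-e)^*)\ge 2$, i.e.\ $G-e\notin\A_{xy}^1$, contradicting minimality; the corner case $e=xy$ is handled by contraction. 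For \emph{$xy\notin E(G)$}: suppose $xy\in E(G)$, so $xy$ lies in some $G_i$, say $G_1$. By minimality $G-xy\in\A_{xy}^1$; but $G-xy$ is itself an $xy$-sum of two ``Kuratowski-minus-$xy$'' pieces, each with non-planar $^+$, and the analogous cut-cofacial argument (combining two planar pieces at the four common cut vertices yields only ``strip'' faces containing two consecutive cut vertices, never all four) shows $G-xy\notin\A_{xy}^1$, a contradiction. Enumerating the choices of Kuratowski type together with the placement of $x,y$ in each $K_{3,3}$ then yields exactly the six graphs of Fig.~\ref{fg-kuratowski-sum}.

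The main obstacle is the overlap argument, specifically verifying that each non-planar $G_i^+$ forces $G_i^{\mathrm{cut}}$ to straddle both arcs of the 4-cycle $C$ obtained from the cut. The cleanest justification goes through collapsing the split and invoking non-planarity of $G_i^+$ directly. Once this is in hand, Theorem~\ref{th-bridge-overlap} delivers the triangle overlap obstruction and hence the planarity contradiction needed in the $t=2$ step (and its two-bridge variants used in the final two claims).
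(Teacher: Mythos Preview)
Your approach is quite different from the paper's. The paper's proof is essentially two lines: it first records that each $xy$-sum of two Kuratowski graphs is a projective-plane obstruction (citing Archdeacon~\cite{archdeacon-1981}), hence not in $\A_{xy}^1$ by Corollary~\ref{cr-alt-equiv}(ii); it then observes, via Corollary~\ref{cr-planar-patch}, that any $G$ satisfying the hypotheses has two $xy$-bridges $C_1,C_2$ with $C_i^+$ non-planar, so $G$ contains one of the six target graphs as a minor. Minimality finishes. Your route tries to avoid the external citation by a direct bridge-overlap argument, which is a nice idea, but as written it has real gaps.

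First, the $t=3$ case of your ``$t=2$'' step is broken: when $t=3$ your graph $G'$ equals $G$ itself, so you cannot conclude $G'\in\A_{xy}^1$ from minimality. More seriously, the heart of the lemma is exactly the two-piece statement ``an $xy$-sum of two parts with non-planar $^+$ is not in $\A_{xy}^1$'', and your overlap mechanism does not deliver it. After cutting a two-piece sum and adding the 4-cycle $C=x_1y_1x_2y_2$, each cut piece must attach to at least three of the four vertices (your straddling argument is fine), but two such bridges \emph{can} avoid each other---e.g.\ attachment sets $\{x_1,y_1,x_2\}$ and $\{x_1,y_2,x_2\}$---so there is no planarity contradiction. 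Your attempt to recover this via $(G-e)^*$ conflates two different conclusions: the overlap argument, correctly read, shows that a three-piece sum with all $^+$ non-planar lies outside $\A_{xy}^1$, whereas what you need is $g((G-e)^*)\ge 2$, i.e.\ $G-e\notin\A_{xy}^1$. These are not the same statement (the former concerns $((G-e)^*)^*$, not $(G-e)^*$). The same issue recurs in your $xy\notin E(G)$ step. Incidentally, even in the three-piece case your stated reason (``triangle in the overlap graph'') is not quite right: three $\ge 3$-attachment bridges on a 4-cycle need not pairwise overlap; the correct contradiction is that after adding $C$ all cut pieces lie on one side of $C$, hence pairwise avoid, which is impossible for three bridges each attaching to at least three of four cyclic vertices.

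In short, the missing ingredient is precisely that the six $xy$-sums of two Kuratowski graphs are not in $\A_{xy}^1$; the paper imports this from the projective-plane obstruction list, and your argument does not supply an independent proof of it.
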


\begin{figure}
  \centering
  \includegraphics{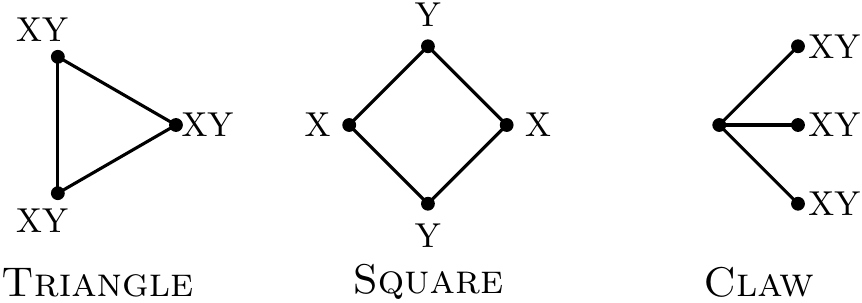}
  \caption{Kuratowski graphs as an alternating extension to outerplanar graphs.}
  \label{fg-alt-kuratowski}
\end{figure}

\begin{proof}
  Each $xy$-sum of two Kuratowski graphs (without the edge $xy$ even if it is present in a summand) is 
  a projective planar obstruction (see~\cite{archdeacon-1981}) and it is straightforward to check that
  if belongs to $\F_{xy}^1$.
  Fig.~\ref{fg-alt-kuratowski} shows the three possible $XY$-labelled blocks that arise.

  Since $H$ is disconnected, $G$ has at least two non-trivial $xy$-bridges $C_1$ and $C_2$.
  Since neither $C_1 - x - y$ nor $C_2 - x - y$ contains a terminal, Corollary~\ref{cr-planar-patch} gives that both $C_1 + xy$ and $C_2 + xy$ are non-planar.
  Hence $G$ contains an $xy$-sum of two Kuratowski graphs as a minor.
\end{proof}

\section{Connectivity 2}

This section is devoted to the proof of the following lemma characterizing graphs in $\F_{xy}^1$ that correspond to a 2-connected $XY$-labelled graph.

\begin{figure}
  \centering
  \includegraphics{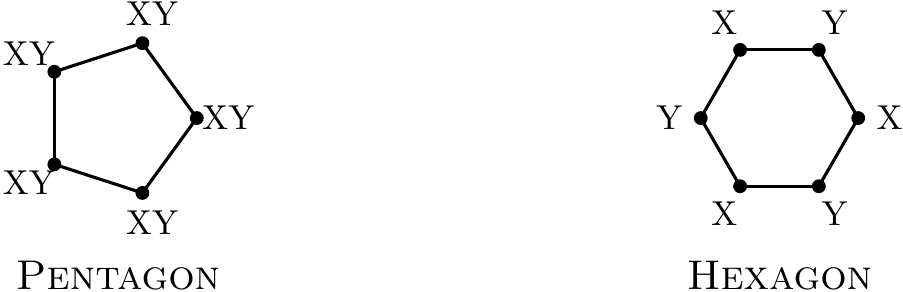}
  \caption{The 2-connected $XY$-labelled graphs that correspond to graphs in $\F_{xy}^1$ that contain the edge $xy$.}
  \label{fg-alt-xy}
\end{figure}

\begin{figure}
  \centering
  \includegraphics{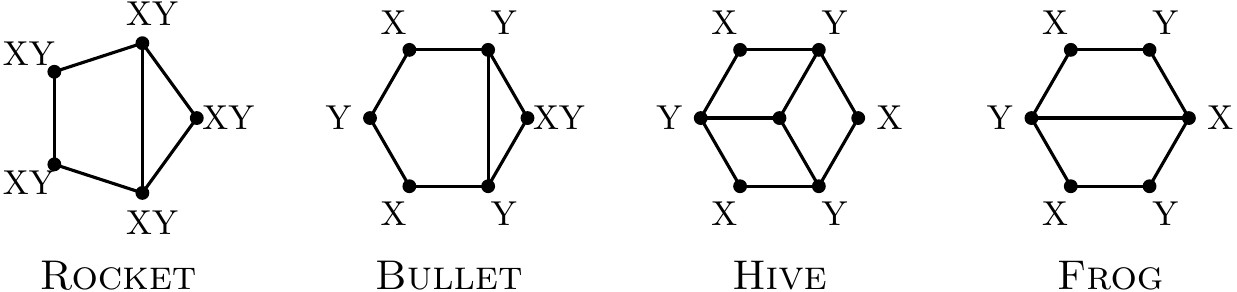}
  \caption{The 2-connected $XY$-labelled graphs that correspond to graphs in $\F_{xy}^1$ without the edge $xy$.}
  \label{fg-alt-2-con}
\end{figure}

\begin{lemma}
\label{lm-alt-2-con}
  Let $G$ be a graph in $\F_{xy}^1$ such that $G /xy$ is planar and such that  the $XY$-labelled graph $H$ corresponding to $G$ is 2-connected.
  If $xy\in E(G)$, then 
  $H$ is one of the graphs in Fig.~\ref{fg-alt-xy}.
  Otherwise, $H$ is one of the graphs in Fig.~\ref{fg-alt-2-con}.
\end{lemma}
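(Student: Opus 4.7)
The plan is to use the minimality of $G$ in $\F_{xy}^1$ together with Corollary~\ref{cr-alt-equiv} to squeeze $H$ into the list of figures. Since $G/xy$ is planar, $\hat H$ is planar; since $G \not\in \A_{xy}^1$, Corollary~\ref{cr-alt-equiv} implies that $H$ is not 2-alternating, i.e., every planar embedding of $\hat H$ has at least six label transitions around $v_{xy}$.

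First, I would argue that the hypothesis of Lemma~\ref{lm-alt-boundary} holds: $H$ has at most four vertices with both labels $X$ and $Y$. If $v$ carries both labels then $vx, vy \in E(G)$, and by minimality of $G$ we have $G - vy \in \A_{xy}^1$, so the $XY$-labelled graph obtained from $H$ by removing the $Y$-label at $v$ becomes 2-alternating. A careful analysis combining such label-deletion moves, parity of transitions, and the 2-connectivity of $H$ bounds the number of doubly labelled vertices. Applying Lemma~\ref{lm-alt-boundary} then yields six distinguished labelled vertices $v_1,\ldots,v_6$ that appear cyclically in every boundary of $H$ with labels $X,Y,X,Y,X,Y$.

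Next, I would exploit 2-connectivity and planarity: the planar embedding of $\hat H$ forces $H$ to be built around a facial cycle $C$ carrying the $XYXYXY$ pattern, plus a collection of $C$-bridges controlled via Theorem~\ref{th-bridge-overlap} and Corollary~\ref{cr-planar-patch}. Minimality of $G$ then demands that every edge, label, and branch of $H$ is critical: removing it either produces a planar embedding whose boundary avoids the $XYXYXY$ pattern, or destroys 2-connectivity in a way that breaks the obstruction. This pins $H$ down to a small list of configurations. The case $xy \in E(G)$ adds the additional constraint that $\hat H$ carries a loop at $v_{xy}$ contributing one $X$- and one $Y$-slot to the label sequence, which is why Figure~\ref{fg-alt-xy} is structurally simpler than Figure~\ref{fg-alt-2-con}.

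The main obstacle is the enumeration itself: even granting the $XYXYXY$ structure on every boundary, many 2-connected planar graphs $H$ are a priori possible, and for each candidate we must verify that minimality rules it in or out. I expect the case analysis to branch on (a) the number of $v_i$ lying on a common facial cycle of $H$, (b) the pattern of doubly labelled vertices among the $v_i$, and (c) the types of $C$-bridges present. In each branch, Lemma~\ref{lm-2-blocks} serves as the principal tool: whenever some minor operation produces a boundary on which the $X$-labels or $Y$-labels can be covered by two blocks, the resulting graph is 2-alternating and the branch is eliminated; the surviving cases should match Figures~\ref{fg-alt-xy} and~\ref{fg-alt-2-con}. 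Verifying that each displayed graph is itself a genuine minimal obstruction is a finite check that I would carry out at the end.
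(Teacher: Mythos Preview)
Your proposal has a genuine gap at its very first step. You claim that minimality forces $H$ to have at most four vertices carrying both labels, so that Lemma~\ref{lm-alt-boundary} applies directly and yields the $XYXYXY$ pattern. But this is false: the graph \gph{Pentagon} in Fig.~\ref{fg-alt-xy} is a 5-cycle in which \emph{all five} vertices carry both labels, and \gph{Rocket} in Fig.~\ref{fg-alt-2-con} likewise has five doubly labelled vertices. Your sketched argument (``delete a $Y$-label, use that the result is 2-alternating, combine with parity and 2-connectivity'') cannot succeed because its conclusion contradicts the actual list of obstructions. The correct use of Lemma~\ref{lm-alt-boundary} is as a dichotomy: since $H$ is not 2-alternating, either some boundary carries the sequence $XYXYXY$, \emph{or} $H$ has at least five doubly labelled vertices. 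Both branches must be pursued; the second one produces \gph{Pentagon} and \gph{Rocket}.

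There is a second, structural gap in your treatment of the case $xy \notin E(G)$. You describe the split between Figs.~\ref{fg-alt-xy} and~\ref{fg-alt-2-con} as coming from a loop at $v_{xy}$ in $\hat H$, and plan to branch on ``types of $C$-bridges present'' in a generic way. The paper's argument is sharper and hinges on a point you do not mention: when $xy \notin E(G)$, the non-planarity of $G$ (not of $\hat H$) is what drives the analysis. Viewing $x$ and $y$ as centers of star $C$-bridges $B_x$, $B_y$ in $G$, Lemma~\ref{lm-common-bridge} produces a single $C$-bridge $B$ in $H$ that overlaps both $B_x$ and $B_y$; the entire case analysis for Fig.~\ref{fg-alt-2-con} then runs on the position of the attachments of this one bridge $B$ relative to the vertices $v_1,\ldots,v_6$ (or $v_1,\ldots,v_5$), with Lemma~\ref{lm-segment} handling the situation where all attachments of $B$ lie on a short arc. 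Without isolating this specific overlapping bridge, your proposed enumeration over ``all $C$-bridges controlled via Theorem~\ref{th-bridge-overlap}'' has no organizing principle and will not converge to the four graphs \gph{Rocket}, \gph{Bullet}, \gph{Frog}, \gph{Hive}.
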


First, we derive two lemmas that will be used in the proof of Lemma~\ref{lm-alt-2-con}.

\begin{lemma}
  \label{lm-common-bridge}
  Let $G$ be a graph that consists of a cycle $C$ and $C$-bridges $B_1, B_2$ such that
  all other $C$-bridges avoid each other.
  If $G$ is non-planar, then there is a $C$-bridge $B$ (different from $B_1$ and $B_2$)
  such that $B$, $B_1$, and $B_2$ all pairwise overlap.
\end{lemma}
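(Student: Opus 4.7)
The plan is to apply Theorem~\ref{th-bridge-overlap} to the bridge-overlap graph $\Gamma$ of the set $\B$ of all $C$-bridges of $G$, and then to exploit the very rigid shape that the hypothesis forces on $\Gamma$. I shall assume that every $C$-bridge of $G$ is planar, which is the standing hypothesis for invoking Theorem~\ref{th-bridge-overlap}; otherwise a single non-planar bridge would already account for the non-planarity of $G$ and would need a separate (easier) treatment. Under this assumption, non-planarity of $G$ together with Theorem~\ref{th-bridge-overlap} gives that $\Gamma$ is not bipartite, so $\Gamma$ contains a simple odd cycle $\Gamma_0$.

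The key structural observation is that the hypothesis ``all other $C$-bridges avoid each other'' says precisely that $V(\Gamma) \setminus \{B_1, B_2\}$ is an independent set in $\Gamma$. Consequently every vertex of $\Gamma$ other than $B_1$ and $B_2$ has all of its neighbors inside $\{B_1, B_2\}$, hence degree at most $2$ in $\Gamma$. It follows that no simple cycle of $\Gamma$ can avoid both hubs $B_1, B_2$, and no simple cycle can use only one of them either: deleting that unique hub would leave a non-trivial path inside an independent set, a contradiction.

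So $\Gamma_0$ contains both $B_1$ and $B_2$. Every remaining vertex of $\Gamma_0$ sits between $B_1$ and $B_2$ on the cycle, since its two cycle-neighbors lie in $\{B_1, B_2\}$ and must be distinct (otherwise some hub would be visited twice on a simple cycle). A brief case analysis then forces $\Gamma_0$ to be either a triangle $B_1 B B_2$ (which requires the edge $B_1 B_2 \in \Gamma$ together with a bridge $B$ overlapping both hubs) or a four-cycle $B_1 B B_2 B'$. Odd parity rules out the four-cycle and leaves the triangle, which directly exhibits a $C$-bridge $B \neq B_1, B_2$ such that $B$, $B_1$, and $B_2$ pairwise overlap.

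The main obstacle is the initial translation of the avoidance hypothesis into the independent-set description of $\Gamma$ and the careful ruling out of cycles using only one hub; once these are in hand, the degree bound together with a short enumeration of short cycles forces the conclusion.
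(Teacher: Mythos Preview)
Your proof is correct and follows essentially the same approach as the paper: translate the avoidance hypothesis into $V(\Gamma)\setminus\{B_1,B_2\}$ being independent in the bridge-overlap graph, invoke Theorem~\ref{th-bridge-overlap} to obtain an odd cycle, and deduce that this cycle must be the triangle $B_1BB_2$. Your explicit caveat about the $C$-bridges being planar is well taken---the paper's proof silently assumes this when applying Theorem~\ref{th-bridge-overlap}, and indeed in every application of the lemma the ambient graph $H$ is planar, so all bridges are planar.
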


\begin{proof}
  Let $\B$ be the set of $C$-bridges in $H$ different from $B_1$ and $B_2$.
  Since the bridges in $\B$ avoid each other, $\B$ forms an independent set in the bridge-overlap graph $H$ of $\B \cup \{B_1, B_2\}$.
  Since $G$ is non-planar, Theorem~\ref{th-bridge-overlap} asserts that $H$ is non-bipartite and thus contains an odd cycle.
  Since every edge in $H$ is incident with $B_1$ or $B_2$, this odd cycle is a triangle that consists of $B_1$, $B_2$ and a bridge $B \in \B$.
\end{proof}

\begin{lemma}
  \label{lm-segment}
  Let $H$ be an $XY$-labelled planar graph that consists of an $XY$-labelled cycle $C$
  and a $C$-bridge $B$. Let $C[w_1,w_2]$ be a segment of $C$ that contains
  all attachments of $B$.
  If $C$ contains all labels of $H$ and the graph with terminals corresponding to $H$ is non-planar,
  then $C(w_1, w_2)$ contains both labels.
\end{lemma}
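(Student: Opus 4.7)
My plan is to argue by contraposition. Suppose $C(w_1,w_2)$ does not contain both labels; by the $X$-$Y$ symmetry I may assume that the label $Y$ is absent from $C(w_1,w_2)$. Because $C$ carries all labels of $H$, every $Y$-labelled vertex then lies on the complementary closed segment $C[w_2,w_1]$. I will build a planar embedding of the graph $G$ with terminals corresponding to $H$, which contradicts the non-planarity hypothesis.

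Fix any planar embedding of $H$ in which $C$ is drawn as a Jordan curve and the $C$-bridge $B$ is drawn in the bounded region enclosed by $C$. The key geometric observation is that, since every attachment of $B$ lies on $C[w_1,w_2]$, the open arc $C(w_2,w_1)$ contains no attachment of $B$, and face-tracing in this embedding produces a single inner face $F$ of $H$ whose boundary walk consists of the entire segment $C[w_2,w_1]$ followed by a path through $B$ from $w_1$ back to $w_2$. In particular, every vertex of $C[w_2,w_1]$ --- and hence every $Y$-labelled vertex --- lies on the boundary of $F$, while every vertex of $C$ lies on the boundary of the outer face.

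I would then extend this embedding of $H$ to a planar embedding of $G$ by inserting $x$ into the outer face of $H$ and $y$ into the face $F$, and by drawing the edge $xv$ for each $X$-labelled vertex $v$ and the edge $yv$ for each $Y$-labelled vertex $v$. Since each such $v$ lies on the boundary of the face containing its terminal, no crossings arise and $G$ is planar, a contradiction. The only point requiring care is the existence of the face $F$: when $B$ has at least two attachments, the smallest-subsegment condition forces $w_1$ and $w_2$ to be attachments of $B$, so $F$ is produced by routine face-tracing; when $B$ has a single attachment, the interior of $C$ is a single face of $H$ with all of $C$ on its boundary, and the same construction again yields a planar embedding of $G$. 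I do not expect any further obstacles.
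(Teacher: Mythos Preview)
Your contrapositive argument is correct and is genuinely different from the paper's proof. The paper works in the ``forward'' direction: it views $G$ as the cycle $C$ together with the three $C$-bridges $B$, $B_x$, $B_y$ (the latter two being the stars centred at $x$ and $y$), invokes Lemma~\ref{lm-common-bridge} (hence Theorem~\ref{th-bridge-overlap}) to conclude from non-planarity that $B$, $B_x$, $B_y$ pairwise overlap, and then uses the disk-extension Theorem~\ref{th-disk-ext} applied to $B \cup B_z$ for each $z\in\{x,y\}$ to force either a crossing path or three shared attachments, either of which places the label $Z$ in $C(w_1,w_2)$. Your route bypasses all of this machinery: assuming a label (say $Y$) is absent from $C(w_1,w_2)$, you simply exhibit a planar drawing of $G$ by placing $x$ in the outer face of $H$ and $y$ in the inner face of $H$ bounded by the attachment-free arc. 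This is more elementary and self-contained, at the cost of being specific to this lemma rather than reusing the bridge-overlap tools the paper develops for the rest of Section~5.

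One small correction: you appeal to a ``smallest-subsegment condition'' to conclude that $w_1,w_2$ are attachments of $B$, but the lemma as stated does not assume $C[w_1,w_2]$ is minimal. This does not harm your argument: if $a_1,a_k$ are the extreme attachments in $C[w_1,w_2]$, then the face $F$ obtained by tracing along the attachment-free arc $C[a_k,a_1]$ still contains $C[w_2,w_1]$ on its boundary, which is all you need. Also, your description of the remainder of the boundary of $F$ as ``a path through $B$'' is not quite accurate in general (it is a closed walk that may revisit vertices), but again only the presence of $C[w_2,w_1]$ on $\partial F$ matters.
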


\begin{proof}
  Let $G$ be the graph with terminals $x, y$ corresponding to $H$.
  Let $B_x$ and $B_y$ be the $C$-bridges that contain $x$ and $y$, respectively.
  Since $C$ contains all labels of $H$, $B_x$ and $B_y$ are stars attached only to $C$.
  By Lemma~\ref{lm-common-bridge}, the bridges $B$, $B_x$ and $B_y$ pairwise overlap.
  Theorem~\ref{th-disk-ext} implies that, for each $z \in \{x,y\}$, either
  \begin{enumerate}[\rm(i)]
  \item 
    there are disjoint crossing paths $P_1$ in $B$ and $P_2$ in $B_z$, or
  \item
    the bridges $B$ and $B_z$ have three vertices of attachment in common.
  \end{enumerate}
  Let $Z$ be the label corresponding to the vertex $z$.
  When (i) holds, $C(w_1, w_2)$ contains one of the endvertices of $P_2$ and thus contains the label $Z$.
  When (ii) holds, each attachment of $B$ is labelled $Z$. Since $C(w_1, w_2)$ contains at least one of the attachments of $B$,
  $C(w_1, w_2)$ contains the label $Z$.
  Therefore, $C(w_1, w_2)$ contains both labels $X$ and $Y$ as claimed.
\end{proof}

\begin{proof}[Proof of Lemma~\ref{lm-alt-2-con}]
  Let $C$ be a boundary cycle of $H$
  and $\Pi$ the corresponding planar embedding of $H$.

  Suppose that the edge $xy$ is present in $G$.
  By Lemma~\ref{lm-alt-boundary}, either $C$ contains the label sequence $XYXYXY$,
  and then $H$ has \gph{Hexagon} as a minor, or there are five vertices in $C$ with both labels, and
  then $H$ has \gph{Pentagon} as a minor.

  Therefore, we may assume that the edge $xy$ is not present in $G$.
  Let us consider the $C$-bridges $B_x$ and $B_y$ in $G$ that are the stars with centers $x$ and $y$, respectively.
  We may assume that, in $\Pi$, $C$ is the boundary of the infinite face.
  By Lemma~\ref{lm-common-bridge}, there is a $C$-bridge $B$ such that $B$, $B_x$, and $B_y$ pairwise overlap.

  Let us first consider the case when $C$ does not contain the label sequence $XYXYXY$.
  By Lemma~\ref{lm-alt-boundary}, $C$ contains five vertices with both labels. 
  Let $v_1, \ldots, v_5$ be the vertices with both labels.
  We may assume by symmetry that an attachment of $B$ lies in $C(v_1, v_3)$.
  If there is an attachment of $B$ in the segment $C(v_3, v_1)$, then $H$ has \gph{Rocket} as a minor.
  Otherwise, all attachments of $B$ are in the segment $C[v_1, v_3]$.
  Let $S$ be the support of $B$ in $C[v_1, v_3]$.
  By Lemma~\ref{lm-segment}, the segment $S$ (excluding the endvertices of $S$) contains both labels.
  Thus $H$ has \gph{Rocket} as a minor.

  Now, assume that $C$ contains the label sequence $XYXYXY$ and let $v_1, \ldots, v_6$ be the vertices manifesting that (so $X \in \lab(v_1)$, $Y \in \lab(v_2)$, etc.).
  Let $w_1,\ldots, w_k$ be the attachments of $B$. Note that $k \ge 2$.
  By symmetry, we may assume that $w_1$ lies in the segment $C(v_1, v_3)$.

  If all attachments of $B$ lie in $C[v_1, v_3]$,
  then the support $S$  of $B$ in $C[v_1, v_3]$ (excluding the endvertices of $S$) contains both labels by Lemma~\ref{lm-segment}.
  Thus $H$ has \gph{Bullet} as a minor.
  Hence we may assume that not all attachments of $B$ are in $C[v_1, v_3]$ and similarly in $C[v_2, v_4]$ and so on.
  If there is an attachment of $B$ in the segment $C(v_4, v_6)$, then $H$ has \gph{Frog} as a minor.
  Hence we may assume that all attachments of $B$ lie in the segment $C[v_6, v_4]$.

  By using reflection symmetry exchanging $v_1, v_3$ and $v_4,v_6$, since not all attachments of $B$ are in $C[v_1, v_3]$,
  there is an attachment $w_2$ of $B$ in the segment $C(v_3, v_4]$.
  By the same argument as above, there is no attachment of $B$ in $C(v_6, v_2)$.
  Since not all attachments of $B$ are in $C[v_2, v_4]$, the vertex $v_6$ is an attachment of $B$.
  We conclude that $H$ has \gph{Hive} as a minor.
\end{proof}

\section{Connectivity 1}

In this section, we describe all obstructions in $\F_{xy}^1$ that correspond to an $XY$-labelled graph of connectivity 1.

\begin{lemma}
\label{lm-alt-1-con}
  Let $G$ be a graph in $\F_{xy}^1$ such that $G /xy$ is planar and let $H$ be the $XY$-labelled graph corresponding to $G$.
  If $H$ has connectivity 1, then $H$ is one of the graphs in Fig.~\ref{fg-alt-1-con}.
  Furthermore, $xy \not\in E(G)$.
\end{lemma}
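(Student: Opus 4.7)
The approach is to decompose $H$ at a cut vertex and reduce to the $2$-connected case already settled in Lemma~\ref{lm-alt-2-con}.

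Fix a cut vertex $v$ of $H$ and write $H=H_1\cup H_2$ with $V(H_1)\cap V(H_2)=\{v\}$ and both $H_1,H_2$ nontrivial. For $i=1,2$ let $\hat H_i$ denote the multigraph built from $H_i$ by adjoining $v_{xy}$ together with all labelled edges coming from $H_i$ (so $v_{xy}\in V(\hat H_i)$ iff $H_i$ contains a labelled vertex). Since $\hat H_1$ and $\hat H_2$ share at most the two vertices $v$ and $v_{xy}$, any two planar embeddings of $\hat H_1$ and $\hat H_2$ in which $v$ and $v_{xy}$ lie on a common face combine, up to a flip, into a planar embedding of $\hat H$, and every planar embedding of $\hat H$ arises this way. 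Consequently the rotation at $v_{xy}$ in $\hat H$ is the cyclic splice of the rotations at $v_{xy}$ in $\hat H_1$ and $\hat H_2$, with splice point fixed at the position of the $v$--$v_{xy}$ edges if $v$ is labelled, and otherwise free.

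I would first show that each of $H_1,H_2$ carries a label and that at least one of the $\hat H_i$ fails to be $2$-alternating. If $H_i$ is labelless then $v_{xy}\notin\hat H_i$ and the rotation at $v_{xy}$ is determined by $\hat H_{3-i}$ alone; by minimality of $G$ this forces $H_i$ to collapse to a single edge or vertex, contradicting that $H_i$ is a nontrivial side of the cut. If instead both $\hat H_1$ and $\hat H_2$ are $2$-alternating, then splicing two $2$-alternating cyclic sequences at a single point still yields a $2$-alternating sequence, so Lemma~\ref{lm-2-blocks} would make $H$ itself $2$-alternating, a contradiction.

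Next I would apply Lemma~\ref{lm-alt-2-con} to a suitable $2$-connected reduction of each $\hat H_i$ obtained by contracting the complementary side $H_{3-i}$ into $v$ while merging labels. Stripping local bridges via Lemma~\ref{lm-no-local-bridges} and labelless planar bridges via Corollary~\ref{cr-planar-patch} identifies $\hat H_i$ as a minor of one of the obstructions in Figs.~\ref{fg-alt-xy}--\ref{fg-alt-2-con}. A finite case analysis over the labelling of $v$, the obstruction type of each $\hat H_i$, and the two possible splice configurations at $v_{xy}$ then produces precisely the graphs in Fig.~\ref{fg-alt-1-con}. Inspecting each resulting configuration also verifies the additional claim that $xy\notin E(G)$: were the edge $xy$ present, the freedom in the splice at $v_{xy}$ would allow its loop at $v_{xy}$ to be inserted in an alternating position of the combined rotation, contradicting $G\notin\A_{xy}^1$.

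\textbf{Main obstacle.} The technical crux is carrying out the reduction of $\hat H_i$ to a $2$-connected obstruction with the right labelling. Contracting $H_{3-i}$ onto $v$ can over-label $v$ (for instance, turn an unlabelled $v$ into a doubly-labelled vertex), so each reduced graph must be matched against Lemma~\ref{lm-alt-2-con} for the correct label set while simultaneously preserving the minimality of $G$. Once this pairing is set up correctly, the enumeration is finite and mechanical, but the bookkeeping needed to turn the splice observation into the exact list of Fig.~\ref{fg-alt-1-con} is what makes the argument delicate.
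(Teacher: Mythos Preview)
Your plan has two genuine gaps that break the argument.

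\textbf{First, the splice claim is false.} You assert that ``splicing two $2$-alternating cyclic sequences at a single point still yields a $2$-alternating sequence.'' It does not. Take $H_1$ to be a $5$-cycle $v,a,b,c,d$ with $\lab(a)=\lab(c)=\{X\}$, $\lab(b)=\lab(d)=\{Y\}$, and $v$ unlabelled; then every planar embedding of $\hat H_1$ gives the rotation $XYXY$ at $v_{xy}$, so $\hat H_1$ is exactly $2$-alternating. Take $H_2$ identically. Gluing at $v$ forces the edges to $H_2$ to enter the rotation at $v_{xy}$ between the edges to $a$ and $d$, and the resulting cyclic label sequence is $XYXY\cdot XYXY$ (or its reverse on one side), which has $6$ or $8$ transitions, never $4$. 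So both $\hat H_i$ being $2$-alternating does \emph{not} force $\hat H$ to be $2$-alternating, and your dichotomy ``at least one $\hat H_i$ fails to be $2$-alternating'' collapses.

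\textbf{Second, Lemma~\ref{lm-alt-2-con} does not apply to the pieces.} That lemma is stated for graphs $G\in\F_{xy}^1$; it uses minimality throughout (via Corollary~\ref{cr-planar-patch}, Lemma~\ref{lm-segment}, etc.). Contracting $H_{3-i}$ onto $v$ produces a proper minor of $G$, which lies in $\A_{xy}^1$, not in $\F_{xy}^1$, so you cannot invoke the lemma on it. Moreover, a single cut-vertex decomposition does not reduce to the $2$-connected situation: graphs such as \gph{Star}, \gph{Saddle}, \gph{Tripod} in Fig.~\ref{fg-alt-1-con} have many cut vertices, and each side of any cut is itself of connectivity~$1$.

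The paper proceeds quite differently. It first proves a structural lemma (Lemma~\ref{lm-alt-struct}) establishing, among other things, that every block of $H$ is an edge or a cycle, that leaves carry both labels and their neighbours carry none, and constraints on label patterns along degree-$2$ paths. With these in hand it performs a direct case analysis on the number of cycles in $H$ ($0$, $1$, or at least $2$) and, within each case, on the number of leaves, repeatedly using Lemma~\ref{lm-2-blocks} to eliminate configurations that would make $H$ $2$-alternating. There is no reduction to the $2$-connected classification; the connectivity-$1$ obstructions are built up from scratch. The claim $xy\notin E(G)$ then falls out because every case lands on a graph from Fig.~\ref{fg-alt-1-con}, none of which involves the edge $xy$.
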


The following observation is useful.

\begin{lemma}
  \label{lm-triangle}
  Let $G$ be a graph and $uvw$ be a triangle in $G$.
  If $u$ has degree 3 in $G$,
  then every embedding of $G - vw$ into a surface can be extended into an embedding of $G$
  into the same surface.
\end{lemma}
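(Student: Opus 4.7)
The plan is to exploit the fact that a vertex of degree three has essentially only one local rotation, up to reflection, so any two of its incident edges are cyclically consecutive. Let $z$ denote the third neighbor of $u$ apart from $v$ and $w$, and let $\Pi = (\pi, \lambda)$ be any embedding of $G - vw$ into a surface $\SSS$. The local rotation $\pi(u)$ is a cyclic permutation of the three edges $uv, uw, uz$; regardless of which of the two cyclic orders occurs, the edges $uv$ and $uw$ are consecutive in $\pi(u)$. Consequently, there is a $\Pi$-face $F$ whose facial walk contains the subwalk $v\,u\,w$ (or its reverse).

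Next, I would insert the edge $vw$ into $F$ alongside $u$. Combinatorially, the extended embedding $\Pi'$ of $G$ is defined by placing $vw$ between $uv$ and $uw$ in $\pi(u)$, inserting $vw$ into $\pi(v)$ and $\pi(w)$ immediately next to $uv$ and $uw$ respectively (on the side facing $F$), and assigning the signature of $vw$ to be the product of signatures along the path $v\,u\,w$ so that traversing the triangle $uvw$ is orientation-preserving. Topologically, this corresponds to drawing an arc from $v$ to $w$ inside the open $2$-cell $F$ very close to the vertex $u$. The effect is to split $F$ into two new faces: a triangular face bounded by $u, v, w$ and a face $F'$ whose facial walk is obtained from that of $F$ by replacing the subwalk $v\,u\,w$ by the edge $vw$.

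Finally, I would verify that $\Pi'$ is an embedding into the same surface $\SSS$. The vertex set is unchanged, one edge and one face are added, so the Euler characteristic is preserved; orientability is preserved by the signature choice above, since adding a contractible triangle inside a disk neither creates nor destroys a one-sided cycle. Thus $\Pi'$ embeds $G$ in $\SSS$.

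The only subtle point — and the main thing to get right — is the bookkeeping of signatures and sides in the non-orientable case, so that $vw$ is inserted on the correct side of $u$ within the face $F$ and the triangle $uvw$ becomes a contractible (hence $2$-sided) cycle; once that is done correctly, the surface remains the same by the Euler characteristic computation above.
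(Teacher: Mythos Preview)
Your argument is correct. The key observation---that a degree-$3$ vertex has only one cyclic rotation up to reflection, so $uv$ and $uw$ are consecutive and hence share a facial angle into which $vw$ can be inserted---is exactly the right idea, and your Euler-characteristic and signature bookkeeping is sound.

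The paper's proof is different in presentation, though the underlying geometry is the same. Rather than inserting $vw$ by hand, the paper subdivides the third edge $uz$ of $G-vw$ to obtain a graph $H$, and then observes that $G$ is obtained from $H$ by a $Y$--$\Delta$ move at $u$ (after the subdivision, the degree-$3$ vertex $u$ with neighbours $v,w,z'$ is replaced by the triangle on $v,w,z'$, and renaming $z'$ back to $u$ recovers $G$). Since subdivision and $Y$--$\Delta$ are both well known to preserve the ambient surface, the result follows in one line. Your approach is more elementary and self-contained---you are essentially reproving the relevant special case of the $Y$--$\Delta$ invariance---while the paper's approach is terser but relies on the reader knowing that $Y$--$\Delta$ preserves embeddings. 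Both are valid; the paper's version trades explicitness for brevity.
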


\begin{proof}
  Let $H$ be the graph obtained from $G - vw$ by subdividing the edge incident to $u$ that is not in the triangle $uvw$.
  Then $G$ is the graph obtained from $H$ by applying a $\Delta$-operation on $u$.
  The result follows.
\end{proof}

For graphs in $\F_{xy}^1$, Lemma~\ref{lm-triangle} has the following consequence.

\begin{corollary}
  \label{cr-triangle}
  Let $G \in \F_{xy}^1$ and $uvw$ be a triangle in $G$.
  If $u$ has degree at most 3 in $G$, then $u$ is a terminal.
\end{corollary}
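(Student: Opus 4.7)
The plan is to argue by contradiction. Suppose that $u$ is not a terminal. Every non-terminal vertex of a graph in $\F_{xy}^1$ has degree at least 3 (as noted just after the definition of $\F_{xy}^k$), so $u$ must have degree exactly $3$; let $z$ be the third neighbor of $u$. Since $G \in \F_{xy}^1$ and $vw$ is an edge of $G$, the minor $G - vw$ lies in $\A_{xy}^1$, which means $G - vw$ is either planar or $xy$-alternating on the torus.

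In the planar case, Lemma~\ref{lm-triangle} immediately extends the planar embedding of $G - vw$ to a planar embedding of $G$, so $G \in \A_{xy}^1$, contradicting $G \in \F_{xy}^1$. In the remaining case, I would take an $xy$-alternating embedding $\Pi'$ of $G - vw$ on the torus with an $xy$-alternating face $W$, and use Lemma~\ref{lm-triangle} to extend $\Pi'$ to an embedding $\Pi$ of $G$ on the torus. The key observation is that the edge $vw$ is inserted along the $(uv,uw)$-corner at $u$ in $\Pi'$, so all faces of $\Pi'$ are preserved in $\Pi$ except possibly the single face $F$ incident with that corner, which splits into the triangular face $uvw$ and a residual face $F'$. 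If $W \neq F$, then $W$ survives as an $xy$-alternating face of $\Pi$. If $W = F$, then $F'$ is obtained from $W$ by replacing the subwalk $v, u, w$ at the chosen corner by the edge $vw$, which simply erases one occurrence of $u$ from the facial walk around $W$. Since $u \notin \{x, y\}$, this deletion does not touch any of the four appearances of $x$ and $y$, so $F'$ retains the alternating pattern $x, y, x, y$ and is an $xy$-alternating face of $\Pi$. Either way, $G \in \A_{xy}^1$, contradicting $G \in \F_{xy}^1$.

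The principal subtlety is the second case, where one must verify that the local face-modification produced by adding $vw$ preserves the four-fold alternating appearance of $x$ and $y$ on $W$; this reduces to the single fact that $u \notin \{x,y\}$, which is guaranteed by our assumption that $u$ is a non-terminal.
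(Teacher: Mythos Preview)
Your argument is correct and follows essentially the same approach as the paper's proof: delete $vw$, invoke membership of $G-vw$ in $\A_{xy}^1$, rule out the planar case via Lemma~\ref{lm-triangle}, and in the $xy$-alternating case extend the embedding by routing $vw$ along the path $vuw$, observing that only an occurrence of $u$ is lost from the alternating face so $u\notin\{x,y\}$ forces a contradiction. You are simply a bit more explicit than the paper about why $u$ has degree exactly~3 and about the local face modification, but the idea is the same.
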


\begin{proof}
  Since $G - vw \in \A_{xy}^1$, either $G -vw$ is planar or $G - vw$ is $xy$-alternating on $\SSS_1$.
  By Lemma~\ref{lm-triangle}, the first outcome is not possible since then $G$ would be planar.
  In the second case, Lemma~\ref{lm-triangle} shows that the $xy$-alternating embedding of $G - vw$
  can be extended into an embedding of $G$ in $\SSS_1$ by embedding $vw$ along
  the path $vuw$. This extension would be $xy$-alternating if $u \not\in \{x, y\}$.
  Thus, $u$ is one of the terminals.
\end{proof}

The next lemma will be used throughout the rest of the paper.

\begin{lemma}
  \label{lm-two-blocks}
  Let $H$ be an $XY$-labelled graph that has distinct blocks $B_1$ and $B_2$.
  Suppose that each of $B_1$ and $B_2$ contains both labels $X$ and $Y$ on vertices that
  do not belong to another block.
  Let $G$ be the graph with terminals corresponding to $H$.
  If $H$ is not $1$-alternating, then $G$ is non-planar.
\end{lemma}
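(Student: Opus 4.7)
My plan is to prove the contrapositive: assuming $G$ is planar, I will show $H$ is 1-alternating, i.e., that $\hat H$ has a planar embedding whose rotation at $v_{xy}$ is cyclic $X^aY^b$. For each $i\in\{1,2\}$, fix an $X$-labelled vertex $u_i$ and a $Y$-labelled vertex $v_i$ of $B_i$, both interior to $B_i$ (so neither is a cut vertex of $H$). Using 2-connectivity of each $B_i$, I choose a $u_iv_i$-path $P_i$ inside $B_i$ avoiding any cut vertex that $B_i$ shares with other blocks of $H$. Together with the edges $xu_i$ and $yv_i$, the paths $P_1,P_2$ yield two internally disjoint $xy$-paths in $G$, one through $B_1$ and one through $B_2$, which forces $x$ and $y$ into a common 2-connected block $M$ of $G$ with $B_1\cup B_2\sss M$.

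The main step is to produce a planar embedding of $M$ in which $x$ and $y$ lie on a common face. Granting this, every other block of $G$ is attached at a single cut vertex distinct from $x,y$, so the embedding extends to a planar embedding of $G$ with $x,y$ on a common face $F$. Collapsing $x$ with $y$ through $F$ then gives a planar embedding of $\hat H$ whose rotation at $v_{xy}$ lists the rotation at $x$ (all $X$-labelled neighbours) immediately followed by that at $y$ (all $Y$-labelled neighbours), hence is 1-alternating.

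To find such an embedding of $M$, I split into two cases according to how $B_1$ and $B_2$ sit in $H$. First, if $B_1$ and $B_2$ lie in different components of $H$, then $M-x-y$ is disconnected and $\{x,y\}$ is a 2-separator of the 2-connected planar graph $M$; by the standard fact (via Whitney's flip / SPQR-tree analysis) that the endpoints of a 2-separator of a 2-connected planar graph always lie on a common face, we are done. Second, if $B_1$ and $B_2$ lie in the same component of $H$, there is a cut vertex $c$ of $H$ on the block-tree path between them such that $\{x,y,c\}$ separates $M$ into a $B_1$-side $M_1$ and a $B_2$-side $M_2$ meeting only in $\{x,y,c\}$. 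I would apply the 2-separator argument to each $M_i$ (treating $c$ as an additional boundary vertex) to obtain planar embeddings of $M_i$ in which $x,y,c$ share a face, then glue along $\{x,y,c\}$ to produce an embedding of $M$ in which $x,y,c$ (in particular $x,y$) lie on a common face.

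The main obstacle is the second case: the ``three vertices on a common face'' property is not automatic for an arbitrary 3-separator, and the argument must carefully leverage the block hypothesis---that on each side the $X$- and $Y$-labels stay inside the corresponding $B_i$---to reduce each side to the 2-separator setting. Once the 3-separator case is made rigorous, the remainder is a routine assembly of the pieces.
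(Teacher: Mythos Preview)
Your approach is far more elaborate than the paper's, and the part you yourself flag as ``the main obstacle'' is indeed a genuine gap. You never prove the 3-separator step, and putting three prescribed vertices $x,y,c$ on a common face of a planar graph is \emph{not} a standard fact: for instance, in the octahedron $K_{2,2,2}$ with $\{x,y,c\}$ an independent colour class, no planar embedding has $x,y,c$ cofacial. Your sketch says one ``must carefully leverage the block hypothesis'' to rule such configurations out, but this is exactly the heart of the lemma and you have not done it. (There is also a small wrinkle earlier: you invoke ``2-connectivity of each $B_i$'' to build $P_i$, but a block may be a single edge; this is easily repaired, unlike the 3-separator case.)

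The paper's argument sidesteps all of this with a one-paragraph separating-cycle trick. Assume $G$ is planar and fix any planar embedding $\Pi$. If $x$ and $y$ are $\Pi$-cofacial, then identifying them through that face gives a planar embedding of $\hat H$ whose rotation at $v_{xy}$ reads all $X$'s then all $Y$'s, so $H$ is $1$-alternating. If $x$ and $y$ are not cofacial, then in the induced embedding of $H=G-x-y$ they lie in distinct faces, so some cycle $C\subseteq H$ separates $x$ from $y$ in the plane. Any cycle lies in a single block of $H$, hence $C$ meets at least one of $B_1,B_2$---say $B_1$---in at most one vertex, and the interior of $B_1$ lies entirely on one side of $C$, say the side not containing $x$. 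But $B_1$ has an interior vertex $v$ labelled $X$, so $vx\in E(G)$; yet $v$ and $x$ are separated by $C$, contradicting planarity. This uses the block hypothesis directly and avoids any SPQR-tree or 3-separator analysis.
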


\begin{proof}
  Suppose for contradiction that $G$ is planar and take a planar embedding $\Pi$ of $G$.
  If $x$ and $y$ are cofacial in $\Pi$, then $\Pi$ gives a $1$-alternating embedding of $\hat{H}$.
  If $x$ and $y$ are not cofacial in $\Pi$, then there is a cycle $C$ in $H$ that separates $x$ and $y$ (since $x$ and $y$ lie inside different faces of the induced embedding of $H$).
  Since $C$ is a cycle of $H$, it intersects either $B_1$ or $B_2$ in at most one vertex.
  Say, $B_1$ shares at most one vertex with $C$ and is embedded on the other side of $C$ than $x$ is.
  By assumption, there is a vertex $v \in V(B_1) \sm V(C)$ that is labelled $X$.
  Clearly, $v$ and $x$ are not cofacial in $\Pi$ since they are separated by $C$.
  But $v$ and $x$ are adjacent and thus cofacial in $\Pi$, a contradiction.
\end{proof}

Let $C$ be a block in a graph $G$. The \df{$C$-bridge set} $B_v$ at a vertex $v$ of $C$
is the union of all $C$-bridges in $G$ that are attached to $v$.
The following lemma asserts several properties of $H$ and its labels and it is used to classify the graphs of connectivity $1$ in $\F_{xy}^1$.

\begin{lemma}
\label{lm-alt-struct}
  Let $G$ be a graph in $\F_{xy}^1$ such that $G /xy$ is planar and the corresponding $XY$-labelled graph $H$ has connectivity 1. 
  Then the following statements hold.
  \begin{enumerate}[\rm(S1)]
  \item\label{it-degree}
    Vertices of degree at most $2$ in $H$ are labelled.
    Leaves in $H$ have both labels.
  \item\label{it-endblock}
    If $B$ is an endblock of $H$, and $v$ is a cutvertex that separates $B$ from the rest of $H$,
    then the graph $B - v$ contains both labels.
  \item\label{it-cycle-inside}
    Let $M$ be a block of $H$ that is not an edge and $C$ a boundary cycle of $M$.
    Let $B$ be the subgraph of $M$ that consists of $C$-bridges in $M$.
    If $B$ is non-empty, then $H - B^\circ$ is not $2$-alternating.
  \item\label{it-structure}
    Each block of $H$ is either an edge or a cycle.
  \item\label{it-labels-cycle}
    Let $u$ be a vertex of degree 2 in $H$. If $u$ has only one label, then the neighbors of $u$ are not labelled by $\lab(u)$.
    In particular, if $P$ is a path in $H$ such that each vertex of $P$ has degree 2 in $H$, then
    either each vertex of $P$ has both labels or each vertex of $P$ has precisely one label
    that is different from the labels of its neighbors.
  \item\label{it-leaf}
    The neighbor of a leaf in $H$ is unlabelled.
  \item\label{it-single-leaf}
    Let $C$ be a cycle of $H$ and $T$ a $C$-bridge set that is a tree.
    If $H$ consists of at least three blocks, then $T$ contains at least two leaves of $H$.
  \item\label{it-triangle-att}
    Let $B$ be a block of $H$ that is a triangle and $v$ a vertex of $B$. 
    If $v$ is not a cutvertex, then it has both labels. 
    Otherwise, both labels are attached to $B$ at $v$.
  \end{enumerate}
\end{lemma}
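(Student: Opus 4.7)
The plan is to establish (S1)--(S8) in the stated order, leaning on two workhorses: Corollary~\ref{cr-triangle} for all degree/triangle arguments and Corollary~\ref{cr-planar-patch} for all separator arguments.

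For (S1), I would invoke the observation from Section~\ref{sc-general} that every non-terminal of $G\in\F_{xy}^1$ has degree at least $3$; since $\deg_G(v)=\deg_H(v)+|\lab(v)|$ for $v\in V(H)$, a vertex of $H$-degree at most~$2$ must carry a label and a leaf must carry both. Statements (S5), (S6), and the non-cutvertex half of (S8) then all reduce to Corollary~\ref{cr-triangle}: if a degree-$2$ vertex $u\in V(H)$ and a neighbour $u'$ share the label $X$, then $u,x,u'$ form a triangle in $G$ with $\deg_G(u)=3$, forcing $u$ to be a terminal---contradiction; the same mechanism covers a leaf adjacent to a labelled vertex, and a degree-$2$ triangle-block vertex with fewer than two labels. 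The path consequence of (S5) follows by iterating this observation along the path of degree-$2$ vertices.

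For (S2), I would suppose $B-v$ does not contain label $X$ and derive a contradiction from Corollary~\ref{cr-planar-patch}. No vertex of $B-v$ is then adjacent to $x$, so $\{v,y\}$ is a $2$-vertex-cut of $G$; the $\{v,y\}$-bridge $C$ containing $B-v$ satisfies $V(C)\setminus\{v,y\}=V(B)\setminus\{v\}$, which is terminal-free. Because $G/xy$ is planar, a planar embedding of $\hat H$ restricts to a planar embedding of $C$ with the edge $vy$ drawable in the face originally occupied by $v_{xy}$; thus $C+vy$ is planar, contradicting Corollary~\ref{cr-planar-patch}. The cutvertex half of (S8) follows the same template with the $2$-cut $\{u,w\}$ and the $\{u,w\}$-bridge on the $v$-side. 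For (S3), assuming that $\widehat{H-B^\circ}$ has a $2$-alternating planar embedding, I would note that $C$ (being a boundary cycle of the block $M$) still bounds a face of this embedding on the side opposite $v_{xy}$, and reinsert the planar piece $B$ into that face using the embedding it inherits from $\hat H$; the label sequence around $v_{xy}$ is untouched, producing a $2$-alternating embedding of $\hat H$ and the contradiction $G\in\A_{xy}^1$. For (S4), if a block $M$ were neither an edge nor a cycle, an ear decomposition of $M$ would yield a boundary cycle $C$ of $M$ with $B$ a single internal edge $e$; minimality gives $G-e\in\A_{xy}^1$, and since removing one internal edge of $M$ cannot account simultaneously for the non-planarity and the non-alternation of $G$, the graph $G-e$ must be non-planar, hence $xy$-alternating on the torus, i.e., $H-e=H-B^\circ$ is $2$-alternating---contradicting (S3).

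Finally, for (S7): by (S1) and (S6), $T$ cannot be a path of length at least~$2$, because the interior vertex adjacent to the leaf would have to be simultaneously labelled (degree-$2$, by (S1)) and unlabelled (neighbour of a leaf, by (S6)). A branching $T$ has at least two non-$v$ leaves, both of them leaves of $H$, completing the claim. The remaining case where $T$ is a single edge $vw$ is handled by invoking Lemma~\ref{lm-two-blocks} together with Corollary~\ref{cr-planar-patch}: any third block of $H$ would force $G$ to be planar, contradicting $G\notin\A_{xy}^1$. The main obstacle in the whole proof is the ear-decomposition step in (S4): one must engineer the boundary cycle $C$ so that $B$ isolates a single internal edge $e$ and simultaneously argue that $G-e$ remains non-planar, and a similar structural fine-tuning governs the single-edge case of (S7).
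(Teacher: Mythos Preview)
Your treatment of (S1), (S2), (S5), (S6), and the non-cutvertex half of (S8) matches the paper's, and the reduction of (S7) to the single-edge case via (S1)+(S6) is also the paper's first move. The real problem is (S4), and you have correctly identified it as the obstacle --- but your proposed fix does not work.

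You claim that an ear decomposition of a $2$-connected block $M$ lets you choose a boundary cycle $C$ so that the collection $B$ of \emph{all} $C$-bridges in $M$ is a single edge. This fails already for $M=K_4$: every planar embedding has a triangular outer face, and the unique $C$-bridge is a star $K_{1,3}$, never a single edge. Since (S3) is stated for $B$ equal to the full set of $C$-bridges in $M$, you cannot drop just one interior edge and still invoke it. Your backup argument --- that ``removing one internal edge cannot account simultaneously for the non-planarity and the non-alternation of $G$'', hence $G-e$ is non-planar --- is unsupported: nothing prevents a single interior edge of $M$ from lying in every Kuratowski subgraph of $G$. The paper sidesteps all of this by deleting \emph{all} of $B$ at once: $G-B^\circ$ is a proper minor, hence in $\A_{xy}^1$; (S3) rules out the $xy$-alternating option; and the non-planarity of $G-B^\circ$ is obtained not from a single-edge argument but from Lemma~\ref{lm-two-blocks}, using (S2) to exhibit two blocks of $H-B^\circ$ each carrying both labels on private vertices.

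Two secondary gaps. In (S3) you assert that $C$ bounds a face on the side opposite $v_{xy}$ in an arbitrary $2$-alternating embedding of $\widehat{H-B^\circ}$; this needs the argument (supplied in the paper via (S2)) that every bridge attached to $C$ contains a labelled vertex and hence a path to $v_{xy}$ avoiding $C$, forcing it to the $v_{xy}$-side. In (S7), your single-edge case (``any third block would force $G$ to be planar'') is not a valid deduction from Lemma~\ref{lm-two-blocks} or Corollary~\ref{cr-planar-patch}; the paper instead \emph{contracts} the pendant edge, uses Lemma~\ref{lm-two-blocks} to force the contracted graph to be $xy$-alternating, and then uncontracts inside the resulting $2$-alternating embedding. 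Finally, the cutvertex half of (S8) does not follow from a $\{u,w\}$-cut argument as you suggest (the bridge on the $v$-side may well contain the terminal $x$); the paper gets it in one line from (S2) applied to an endblock inside the $v$-bridge.
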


\begin{proof} 
  Each property is proved separately.

  \point{(S\ref{it-degree}):}
  Vertex of degree 2 in $H$ with no label would be a vertex of degree 2 in $G$.
  Similarly, a vertex of degree 1 with at most one label would be a vertex of degree at most 2 in $G$.

  \point{(S\ref{it-endblock}):}
    Let $B$ be an endblock of $H$ and $v \in V(B)$ the cutvertex that separates $B$ from the rest of the graph.
    If $B$ is an edge, then the result follows from~(S\ref{it-degree}).
    Suppose for contradiction that $B - v$ does not contain the label $Y$.
    Since $G/xy$ is planar,  $B$ is either a planar block of $G$ or $B$ is in an $xv$-bridge $C$ of $G$
    such that $C + xv$ is planar.
    Corollary~\ref{cr-planar-patch} asserts that this cannot happen in $G$.

    \point{(S\ref{it-cycle-inside}):}
    Suppose $B$ is non-empty and $\Pi$ is a 2-alternating embedding of $\hat{H - B^\circ}$ in the plane.
    Suppose that there is an edge $e$ of $H - B^\circ$  with one end $v$ in $C$. 
    By construction of $H - B^\circ$, $e$ lies in a different $v$-block $B$ of $H$ than $C$.
    By~(S\ref{it-endblock}), there is a vertex $u$ in $B$  labelled $X$.
    Thus there is a path $P$ in $\hat{H - B^\circ}$ that connects $v_{xy}$ and $v$ and is internally disjoint from $C$.
    It follows that $e$ is embedded on the same side of $C$ in $\Pi$ as $x$ and $y$.
    We conclude that $C$ is a $\Pi$-face.
    By construction of $C$, $\Pi$ can be extended to a 2-alternating embedding of $\hat{H}$ by embedding $B$
    inside $C$ --- a contradiction.

    \point{(S\ref{it-structure}):}
    Let $M$ be a block of $H$ that is neither a cycle nor an edge.
    Let $C$ be a boundary cycle of $M$ and $B$ the subgraph that consists of $C$-bridges in $M$.
    By~(S\ref{it-cycle-inside}), $G - B^\circ$ is not $xy$-alternating on the torus.
    By~(S\ref{it-endblock}), $H - B^\circ$ contains two endblocks that contain both labels.
    By Lemma~\ref{lm-two-blocks}, $G - B^\circ$ is non-planar, a contradiction with $G - B^\circ \in \A_{xy}^1$.

    \point{(S\ref{it-labels-cycle}):}
    By~(S\ref{it-degree}), $u$ is labelled, say by $X$.
    If $v$ is a neighbor of $u$ with label $X$,
    then  $u$ is a vertex of degree 3 in the triangle $uvx$ which is not possible by Corollary~\ref{cr-triangle}
    unless $u$ is also labelled $Y$.

    \point{(S\ref{it-leaf}):}
    Let $v$ be a leaf and $u$ its neighbor.
    If $u$ is labelled, say with label $X$,
    then $v$ is a vertex of degree 3 in the triangle $vxu$ which is not possible by Corollary~\ref{cr-triangle}.

    \point{(S\ref{it-single-leaf}):}
    Let $C$ be a cycle and $T$ be a $C$-bridge set that is a tree.
    Assume that $H$ has at least 3 blocks and that $T$ contains only one leaf.
    We see that $T$ is a path and, by~(S\ref{it-leaf}) and~(S\ref{it-degree}), it is a path of length 1.
    Contract $T$ to $C$ to get $H'$. 
    Let $G'$ be the graph corresponding to $H'$.
    By the choice of $G$, $G'$ is either $xy$-alternating on the torus or planar.
    Since $H$ either contains 3 endblocks or two disjoint endblocks,
    if $G'$ is not $xy$-alternating on the torus, then Lemma~\ref{lm-two-blocks} gives that $G'$ is non-planar.
    Hence $G'$ is $xy$-alternating on the torus.
    Let $\Pi$ be a 2-alternating embedding of $\hat{H'}$ in the plane. 
    Uncontract $T$ to get a 2-alternating embedding of $\hat{H}$ --- a contradiction.

    \point{(S\ref{it-triangle-att}):}
    Let $v$ be a vertex in a triangle $C$ with at most one label.
    If $v$ is not a cutvertex, then  $v$ is has degree at most 3 in $G$.
    By Corollary~\ref{cr-triangle}, this is a contradiction.
    If $v$ is a cutvertex, then there is a $v$-bridge $B'$ that does not contain $C$.
    Since $B'$ contains an endblock of $H$, (S\ref{it-endblock}) implies that $B'$ contains both labels.
    These labels are attached to $B$ at $v$.
\end{proof}

\begin{figure}
  \centering
  \includegraphics{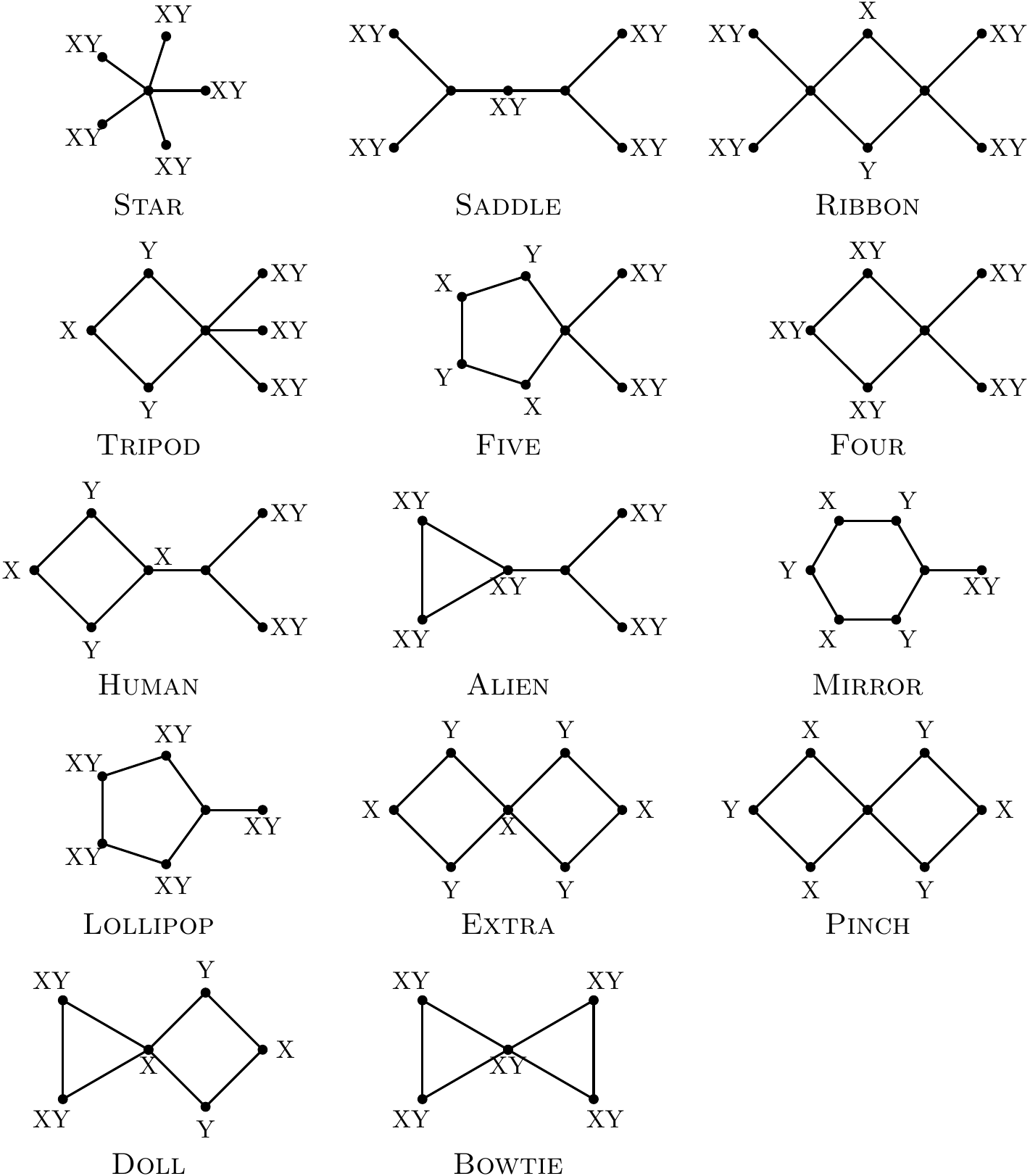}
  \caption{The $XY$-labelled graphs of connectivity 1 that correspond to graphs in $\F_{xy}^1$.}
  \label{fg-alt-1-con}
\end{figure}

We use the structural properties from Lemma~\ref{lm-alt-struct} to prove Lemma~\ref{lm-alt-1-con}.

\begin{proof}[Proof of Lemma~\ref{lm-alt-1-con}]
  Let $G$ and $H$ be as in the statement of the lemma.
  Our goal is to show that $H$
  has one of the graphs from Fig.~\ref{fg-alt-1-con} as a minor.

  If $H$ has at least five leaves, then 
  all leaves are labelled $X$ and $Y$, by~(S\ref{it-degree}). Since $H$ is connected, $H$ has \gph{Star} as a minor.
  We assume henceforth that $H$ has at most four leaves.

  By~(S\ref{it-structure}), every block of $H$ that is not an edge is a cycle.
  We split the discussion according to the number of cycles in $H$.

  \begin{casesblock}
  \case{$H$ is acyclic}
  Suppose  $H$ has $k$ leaves $w_1, \ldots, w_k$, where $k \le 4$. Let $u_1, \ldots, u_k$ be their neighbors (possibly not distinct).
  By~(S\ref{it-leaf}) and~(S\ref{it-degree}), vertices $u_i$ (where $i = 1,\ldots, k$) have no labels and are of degree at least 3.
  By a counting argument, there are at most two such vertices in $H$.
  If there is only one vertex $u$ of degree at least 3, $H$ is a star with center $u$ and 
  thus $H$ is a proper minor of \gph{Star} and hence $G$ is in $\A_{xy}^1$.
  Thus, there are two of them, say $u_1$ and $u_2$, and they are connected by a path $P$.
  If $P$ contains both labels $X$ and $Y$, then $H$ has \gph{Saddle} as a minor.
  If $P$ contains at most one of the labels, say $X$, then the two pairs of leaves are covered by two $Y$-blocks 
  and thus $G$ is in $\A_{xy}^1$ by Lemma~\ref{lm-2-blocks} --- a contradiction.
  
  \case{$H$ has precisely one cycle $C$}
  Since $C$ is the only cycle in $H$, every $C$-bridge is a tree attached to a vertex of $C$. 
  The proof is split according to the number of leaves of $H$.
  Note that $H$ has at least one leaf since $H$ is not 2-connected.

  \subcase{$H$ has precisely four leaves}
  If $C$ is an endblock, then a single $C$-bridge set $B_v$ contains all four leaves $w_1, \ldots, w_4$.
  By~(S\ref{it-endblock}),  $C - v$ contains both labels.
  Therefore, $H$ has \gph{Star} as a minor.

  Otherwise, by~(S\ref{it-single-leaf}), there are precisely two non-trivial $C$-bridge sets $B_{v_1}$ and $B_{v_2}$, and each contains two leaves. 
  Hence each of $B_{v_1} - v_1$ and $B_{v_2} - v_2$ contains at most one vertex of degree 3 in $H$.
  When $B_{v_1} - v_1$ contains a vertex of degree 3, let $u_1$ be this vertex.
  Otherwise, let $u_1 = v_1$. Define $u_2$ similarly.
  Note that $u_1$ and $u_2$ are unlabelled by~(S\ref{it-leaf}) and~(S\ref{it-degree}).
  If there is a path $P$ in $H$ connecting $u_1$ and $u_2$ and both labels $X$ and $Y$ appear on $P$,
  then $H$ has \gph{Saddle} as a minor.
  Let $P_1$ and $P_2$ be the two paths in $C$ connecting $v_1$ and $v_2$.
  If $P_1$ contains $X$  and $P_2$ contains $Y$ (or vice versa), then $H$ has \gph{Ribbon} (or \gph{Saddle}) as a minor.
  Otherwise, there is a label missing from $H - \{w_i \mid i=1,\ldots,4\}$, say $X$, so the leaves are covered by two $X$-blocks.
  Lemma~\ref{lm-2-blocks} implies that $G \in \A_{xy}^1$, a contradiction.

  \subcase{$H$ has precisely three leaves}
  By~(S\ref{it-single-leaf}), there is a single $C$-bridge set $B_v$ that contains all three leaves. 
  Suppose $C$ is a triangle.
  By~(S\ref{it-triangle-att}),
  both vertices of $C$ different from $v$ have both labels and $H$ contains \gph{Star} as a minor.
  
  Suppose $C$ has length at least 4.
  By~(S\ref{it-labels-cycle}), $C-v$ contains the label sequence $XYX$ or $YXY$.
  Thus $H$ has \gph{Tripod} as a minor.

  \subcase{$H$ has precisely two leaves}
  By~(S\ref{it-single-leaf}), there is a single $C$-bridge set $B_v$ that contains both leaves. 
  Let $u$ be a vertex of degree 3 in $B_v - v$ if there is one and let $u=v$ otherwise.
  Let $P$ be the path from $u$ to $v$, possibly of zero length.
  
  Suppose $C$ is a triangle.
  Again by~(S\ref{it-triangle-att}),
  both vertices of $C$ different from $v$ have both labels.
  If $P$ contains both labels, then $H$ has \gph{Alien} as a minor (by~(S\ref{it-leaf})).
  Thus $P$ contains at most one label, say $X$, and then labels $Y$ are covered by two $Y$-blocks, one at the leaves
  and one on the triangle. By Lemma~\ref{lm-2-blocks}, $G \in \A_{xy}^1$, a contradiction.
  
  Suppose $C$ has length at least 4.
  If all vertices in $C - v$ have both labels, then $H$ has \gph{Four} as a minor.
  If $C - v$ contains the label sequence $XYXY$, then $H$ has \gph{Five} as a minor.
  Otherwise, (S\ref{it-labels-cycle}) implies that $C$ has length 4 and
  $C - v$ form the label sequence $YXY$ or $XYX$, say the former.
  If $P$ contains $X$, then $H$ has \gph{Human} as a minor.
  Otherwise, the labels $X$ are covered by two $X$-blocks, one at the leaves and one covering the label $X$ at $C$ --- a contradiction by Lemma~\ref{lm-2-blocks}.

  \subcase{$H$ has precisely 1 leaf}
  Let $w$ be this leaf and $u$ its neighbor.
  By~(S\ref{it-leaf}) and~(S\ref{it-degree}),
  $u$ is unlabelled vertex of degree at least 3 and thus lies on $C$.
  If $C$ has length at most 5, then $H$ contains five vertices with both labels, by Lemma~\ref{lm-alt-boundary}.
  Thus $H$ is isomorphic to \gph{Lollipop}.
  If $C$ has length at least 6 (and, then $H$ has \gph{Mirror} as a minor, by~(S\ref{it-labels-cycle}).

  \case{$H$ has (at least) two cycles, $C_1$ and $C_2$}
  Pick $C_1$ and $C_2$ such that, first, the distance between them is maximal and, second,
  their size is maximal.
  By~(S\ref{it-structure}), $C_1$ and $C_2$ are blocks of $H$ that share at most one vertex.
  Let $P$ be a shortest path (possibly of zero length) joining vertices $v_1 \in V(C_1)$ and $v_2 \in V(C_2)$.
  Note that by the choice of $C_1$ and $C_2$, all $C_1$-bridges attached to $C_1 - v_1$ and 
  all $C_2$-bridges attached to $C_2 - v_2$ are trees.

  \subcase{$C_1$ and $C_2$ are triangles}
  Suppose there is more than one $C_1$-bridge at $v_1$ and let $B$ be a $C_1$-bridge at $v_1$ not containing $P$.
  By~(S\ref{it-endblock}), $B$ contains both labels.
  By~(S\ref{it-triangle-att}), all vertices of $C_1 - v_1$ and $C_2 - v_2$ have both labels attached.
  Thus $H$ has \gph{Star} as a minor.
  So we may assume that there is only one $C_1$-bridge attached at $v_1$.
  Similarly, there is only one $C_2$-bridge attached at $v_2$.

  If there is a $C_1$-bridge attached to a vertex $v$ of $C_1 - v_1$, then
  the $C_1$-bridge set at $v$ is a tree containing at least two leaves by~(S\ref{it-single-leaf}).
  This implies that $H$ has \gph{Star} as a minor. Thus there are no $C_1$-bridges attached to $C_1 - v_1$.
  The same holds for $C_2$ by symmetry.

  If the component $M$ of $H - E(C_1) - E(C_2)$ containing $P$ has both labels, then $H$ has \gph{Bowtie} as a minor.  
  Suppose to the contrary that $M$ has at most one label, say $X$.
  Since there are no other bridges attached to $C_1$ and $C_2$, 
  the $Y$-labelled vertices of $H$ are covered by two $Y$-blocks, a contradiction by Lemma~\ref{lm-2-blocks}. 


  \subcase{$C_1$ is a triangle and $C_2$ has length at least 4}
  If $H$ contains four leaves, then it is not difficult to check that $H$ has \gph{Star} as a minor.
  Hence there is at most one non-trivial bridge set attached to $C_1 - v_1$ or $C_2 - v_2$ (by~(S\ref{it-single-leaf})).
  Suppose that there is a $C_2$-bridge set $B$ attached to a vertex $v$ in $C_2 - v_2$.
  By~(S\ref{it-single-leaf}), $B$ contains at least two leaves. If $B$ contains three leaves, then $H$ has \gph{Star} as minor.
  Therefore, $B$ has precisely two leaves $w_1, w_2$.
  Let $M$ be the component of $H - E(C_1) - w_1 - w_2$ containing $P$.
  By using~(S\ref{it-leaf}), it is easy to see that, if $M$ contains both labels, then $H$ has \gph{Alien} as a minor.
  Otherwise, $M$ has at most one label, say $X$.
  Thus labels $Y$ are covered by two $Y$-blocks, one at $C_1 - v_1$ and the other at $w_1, w_2$.
  A contradiction by Lemma~\ref{lm-2-blocks}.

  Therefore, there is no $C_2$-bridge attached to $C_2 - v_2$.
  By~(S\ref{it-labels-cycle}), $C_2 - v_2$ either contains the sequence $YXY$ or $XYX$, say the former.
  Suppose there is a $C_1$-bridge $B$ attached at $C_1 - v_1$.
  By~(S\ref{it-single-leaf}), $B$ has at least two leaves.
  Hence $H$ has \gph{Tripod} as a minor. Therefore, there is no $C_1$-bridge attached at $C_1 - v_1$
  and both vertices in $C_1 - v_1$ have both labels.
    
  Let $M$ be the component of $H - E(C_1) - E(C_2)$ containing $P$.
  If $M$ contains $X$, then $H$ has \gph{Doll} as a minor.
  Hence $M$ contains at most one label, $Y$.
  If $C_2$ has length at least 5, then $C_2 - v_2$ contains the label sequence $XYXY$ by~(S\ref{it-labels-cycle}).
  It follows that $H$ has \gph{Five} as a minor. Thus $C_2$ has length 4.
  If all vertices in $C_2 - v_2$ contain both labels, then $H$ has \gph{Four} as a minor.
  Otherwise, the labels $X$ at $C_2 - v_2$ can be covered by an $X$-block.
  Since all other labels $X$ are at $C_1 - v_1$ covered by one $X$-block, Lemma~\ref{lm-2-blocks} implies that
  $G \in \A_{xy}^1$, a contradiction.

  \subcase{Both $C_1$ and $C_2$ have length at least 4}
  By~(S\ref{it-single-leaf}), every bridge set attached to $C_1 - v_1$ and $C_2 - v_2$
  contains at least two leaves. Suppose there are non-trivial bridge sets $B_1$ attached to $C_1 - v_1$
  and $B_2$ attached to $C_2 - v_2$, respectively. Since $H$ contains at most four leaves, $B_1$ contains two leaves $w_1, w_2$
  and $B_2$ contains two leaves $w_3, w_4$. If $M = H - \{w_i \mid i=1,\ldots,4\}$ contains both labels,
  then $H$ has \gph{Saddle} as a minor. Otherwise, $M$ has at most one label, say $X$.
  Hence all $Y$ labels are at the leaves and can be covered by two $Y$-blocks.
  A contradiction by Lemma~\ref{lm-2-blocks}.
  If there are two non-trivial bridge sets $B_1, B_2$ attached to one of the cycles, say to $C_1$, then
  $B_1$ and $B_2$ contain together four leaves. By~(S\ref{it-endblock}), there are both labels attached to a vertex of $C_2 - v_2$.
  Hence $H$ has \gph{Star} as a minor.

  Therefore, there is at most one non-trivial bridge set attached to $C_1 - v_1$ and $C_2 - v_2$.
  Suppose there is a $C_1$-bridge set $B$ attached to a vertex $v$ in $C_1 - v_1$.
  By~(S\ref{it-labels-cycle}), $C_2 - v_2$ contains the label sequence $YXY$ or $XYX$, say the former.
  If $B$ contains at least three leaves, then $H$ has \gph{Tripod} as a minor.
  By~(S\ref{it-single-leaf}), $B$ has precisely two leaves $w_1, w_2$.
  If $C_2$ has length at least 5, then 
  $C_2$ contains the sequence $XYXY$, by~(S\ref{it-labels-cycle}).
  Hence $H$ has \gph{Five} as a minor.
  If $C_2$ contains three vertices with both labels, then $H$ has \gph{Four} as a minor.
  If $H - w_1 - w_2 - (C_2 - v_2)$ contains label $X$, then $H$ has \gph{Human} as a minor.
  Otherwise, the $X$ labels at $C_2 - v_2$ can be covered by a single $X$ block
  and all other $X$ labels are at $w_1, w_2$ which are covered by a second $X$ block.
  By Lemma~\ref{lm-2-blocks}, $H$ is 2-alternating, a contradiction.

  By symmetry of $C_1$ and $C_2$, we conclude that there are no non-trivial bridge sets
  attached to $C_1 - v_1$ and $C_2 - v_2$. 
  By~(S\ref{it-labels-cycle}), $C_2 - v_2$ contains the label sequence $YXY$ or $XYX$, say the former.
  By~(S\ref{it-labels-cycle}), $C_1 - v_1$ contains the label sequence $YXY$ or $XYX$.
  If $C_1 - v_1$ contains the sequence $XYX$, then $H$ has \gph{Pinch} as a minor.
  Thus $C_1 - v_1$ contains the sequence $YXY$.
  If $C_2 - v_2$ contains the sequence $XYX$, then $H$ has \gph{Pinch} as a minor.
  Let $M$ be the component of $H - E(C_1) - E(C_2)$
  that contains $P$.
  If $M$ contains label $X$, then $H$ has \gph{Extra} as a minor.
  Otherwise, the labels $X$ can be covered by two $X$-blocks, one at $C_1 - v_1$ and one at $C_2 - v_2$.
  A contradiction by Lemma~\ref{lm-2-blocks}.
  \end{casesblock}%
\end{proof}

\section{The main theorem}
\label{sc-main}

The previous lemmas give rise to the following theorem.

\begin{theorem}
\label{th-main}
  Let $G$ be a graph in $\F_{xy}^1$. Then one of the following holds:
  \begin{enumerate}[\rm(i)]
  \item 
    $G$ is a split of a Kuratowski graph with $x$ and $y$ being the two vertices resulting after the split (see Fig.~\ref{fg-split})
    or $G$ is a Kuratowski graph plus one or two isolated vertices that are terminals.
  \item
    $G$ is an $xy$-sum of two Kuratowski graphs (see Fig.~\ref{fg-kuratowski-sum}).
  \item
    $G$ corresponds to one of the $XY$-labelled graphs in Fig.~\ref{fg-alt-xy},~\ref{fg-alt-2-con}, or~\ref{fg-alt-1-con}.
  \end{enumerate}
\end{theorem}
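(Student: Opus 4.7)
The plan is to assemble the structural lemmas of the preceding sections via a short case analysis. First, I would invoke Lemma~\ref{lm-k-graph}: either $G$ falls under conclusion (i) of that lemma (a split of a Kuratowski graph, or a Kuratowski graph together with one or two isolated terminals), which is exactly conclusion (i) of the theorem, or else $G/xy$ is planar. For the remainder of the argument I assume the latter and work with the $XY$-labelled graph $H$ corresponding to $G$.

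With $G/xy$ planar, I would split into three subcases according to the connectivity of $H$. If $H$ is disconnected, Lemma~\ref{lm-alt-disconnected} delivers conclusion (ii): $G$ is an $xy$-sum of two Kuratowski graphs, $xy\not\in E(G)$, and the six non-isomorphic possibilities are enumerated in Fig.~\ref{fg-kuratowski-sum}. If $H$ is 2-connected, Lemma~\ref{lm-alt-2-con} identifies $H$ with one of the labelled graphs in Fig.~\ref{fg-alt-xy} (when $xy\in E(G)$) or Fig.~\ref{fg-alt-2-con} (when $xy\not\in E(G)$), yielding conclusion (iii). Finally, if $H$ has connectivity exactly $1$, Lemma~\ref{lm-alt-1-con} places $H$ among the graphs of Fig.~\ref{fg-alt-1-con}, again yielding (iii).

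The only loose end is the degenerate situation in which $H$ has fewer than two vertices. In that case $G$ has at most three vertices in total, so $G$ is planar and therefore lies in $\A_{xy}^1$, contradicting $G\in\F_{xy}^1$. Hence $H$ has at least two vertices and one of the three connectivity subcases must apply.

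There is no real obstacle in this concluding step: the substantive work has already been carried out in the previous sections, and the hardest single ingredient is Lemma~\ref{lm-alt-1-con}, whose proof leans on the detailed structural properties established in Lemma~\ref{lm-alt-struct}. The theorem itself is simply the concatenation of Lemmas~\ref{lm-k-graph},~\ref{lm-alt-disconnected},~\ref{lm-alt-2-con}, and~\ref{lm-alt-1-con}, matched against the dichotomy provided by the planarity of $G/xy$.
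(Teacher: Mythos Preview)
Your argument is essentially the paper's own: invoke Lemma~\ref{lm-k-graph} for the dichotomy, then split on the connectivity of $H$ and cite Lemmas~\ref{lm-alt-disconnected},~\ref{lm-alt-2-con}, and~\ref{lm-alt-1-con}. The one substantive thing the paper adds, which you omit, is a short final paragraph verifying the converse --- that every graph listed in (i)--(iii) actually lies in $\F_{xy}^1$ (by checking that none contains another as a minor and that each fails to be in $\A_{xy}^1$). Strictly speaking the theorem as stated is one-directional and does not demand this, but the paper's aim is to exhibit the \emph{complete} list $\F_{xy}^1$, and since the proofs of the cited lemmas only produce the figured graphs as \emph{minors} of $H$, this verification is what upgrades ``contains as a minor'' to ``equals''; you may want to include it.
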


\begin{proof}
  By Lemma~\ref{lm-k-graph}, either (i) holds or $G /xy$ is planar.
  In the latter case, let $H$ be the $XY$-labelled graph that corresponds to $G$.
  We will now show that $H$ contains one of these graphs as a minor.
  If $H$ is disconnected, then (ii) holds by Lemma~\ref{lm-alt-disconnected}.
  If $H$ is 2-connected, then $H$ is one of the graphs in Fig.~\ref{fg-alt-xy} or~\ref{fg-alt-2-con} by Lemma~\ref{lm-alt-2-con}.
  Otherwise, $H$ is one of the graphs in Fig.~\ref{fg-alt-1-con} by Lemma~\ref{lm-alt-1-con}.

  It is easy to see that none of the graphs in (i)--(iii) contains another one as a minor. Thus, in order to prove that each
  of them is in $\F_{xy}^1$, it suffices to see that they are not in $\A_{xy}^1$. This is clear for (i) since the graphs in (i) are
  non-planar after identifying $x$ and $y$. Similarly, graphs in (ii) cannot be in $\A_{xy}^1$ since they do not have an embedding 
  in the projective plane. Finally, graphs in (iii) are not in $\A_{xy}^1$ since their corresponding $XY$-labelled graphs are not 2-alternating.
\end{proof}

Note that the edge $xy$ is present in a graph $G \in \F_{xy}^1$ if and only if $G - xy$ is planar.
There are only five graphs in $\F_{xy}^1$ with the edge $xy$, the three splits of Kuratowski graphs (see Fig.~\ref{fg-split}) and 
the two graphs in Fig.~\ref{fg-alt-xy}.

\begin{figure}
  \centering
  \includegraphics{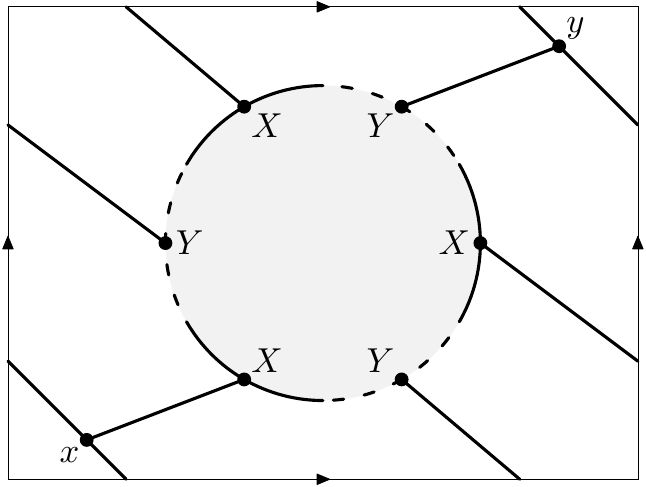}
  \caption{An embedding of a 3-alternating graph in the torus.}
  \label{fg-3-alt}
\end{figure}

\begin{corollary}
  All graphs in $\F_{xy}^1$ embed into the torus. 
\end{corollary}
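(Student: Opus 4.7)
By Theorem~\ref{th-main}, the graph $G\in \F_{xy}^1$ falls into one of the three classes (i), (ii), (iii). The plan is to verify the toroidal embeddability in each class separately.

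For case (i), I would start from the fact (see Fig.~\ref{fg-kuratowski-alt}) that each Kuratowski graph $K$ embeds into $\SSS_1$ in an $xy$-alternating manner for any choice of $x,y\in V(K)$. A Kuratowski graph together with one or two isolated terminal vertices clearly still embeds into $\SSS_1$. For a split of a Kuratowski graph $K$ at some vertex $v$ into adjacent $v_1=x$ and $v_2=y$, I would take a toroidal embedding of $K$, locally replace $v$ by two vertices $x$ and $y$ joined by a short edge routed inside one of the $\Pi$-faces incident with $v$, and distribute the edges incident with $v$ between $x$ and $y$ according to the cyclic order prescribed by the split; this is a legal local modification in any surface embedding.

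For case (ii), I would apply Theorem~\ref{th-decker-ori} directly. If $G$ is an $xy$-sum of two Kuratowski graphs $K^{(1)}$ and $K^{(2)}$, then each $(K^{(i)})^+$ has genus $1$ (it is either a Kuratowski graph or a Kuratowski graph with one extra edge, both of which embed on the torus) and each $\epsilon(K^{(i)})=1$ because Kuratowski graphs are $xy$-alternating on $\SSS_1$. Hence
\[
g(G) \;\le\; g((K^{(1)})^+) + g((K^{(2)})^+) - \epsilon(K^{(1)})\epsilon(K^{(2)}) \;=\; 1+1-1 \;=\; 1,
\]
and deleting the edge $xy$ (if we had added it) does not increase the genus. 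So $G$ embeds in $\SSS_1$.

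For case (iii), I would produce, for each labelled graph $H$ appearing in Fig.~\ref{fg-alt-xy}, Fig.~\ref{fg-alt-2-con}, and Fig.~\ref{fg-alt-1-con}, an explicit toroidal embedding of the associated graph $G$, or equivalently, exhibit a $3$-alternating embedding of $\hat{H}$ in the plane. Such a $3$-alternating embedding (label sequence with six transitions around $v_{xy}$) converts into a toroidal embedding of $G$ via the construction in Fig.~\ref{fg-3-alt}: one resolves the three pairs of crossings at $v_{xy}$ by adding a single handle, exactly as in the proof of (iii)$\Rightarrow$(i) in Lemma~\ref{lm-alt-equiv}, but using three pairs of tabs instead of two. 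The main obstacle is this case: although the task reduces to a finite check, one must, for every graph in all three figures, verify directly that $\hat H$ admits a planar embedding whose label sequence around $v_{xy}$ has exactly six transitions. This is routine but not automatic, because these graphs were chosen precisely to fail being $2$-alternating; however, the arguments in Sections~5 and 6 (where at each step a specific graph was found as a minor only after ruling out $2$-alternating extensions) implicitly constructed such $3$-alternating embeddings, and I would translate those constructions into explicit drawings for each case.
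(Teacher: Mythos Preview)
Your proof follows essentially the same route as the paper: split via Theorem~\ref{th-main} into cases (i)--(iii), handle (i) and (ii) by direct construction, and for (iii) use that each of the listed $XY$-labelled graphs is $3$-alternating together with the single-handle construction of Fig.~\ref{fg-3-alt}. One small imprecision in your case~(ii): by definition $\epsilon(K^{(i)})=1$ requires $(K^{(i)})^+$, not $K^{(i)}$ itself, to be $xy$-alternating; when $K^{(i)}=K_{3,3}$ with $x,y$ in the same bipartition class this means $K_{3,3}+xy$, but inserting the edge $xy$ into the alternating face of Fig.~\ref{fg-kuratowski-alt} leaves an $xy$-alternating face, so the computation $g(G)\le 1+1-1=1$ still goes through.
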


\begin{proof}
  By Theorem~\ref{th-main}, graphs in $\F_{xy}^1$ are of three types, (i)--(iii).
  For graphs in (i) and (ii), embeddings in the torus are easily constructed.
  The graphs in (iii) are 3-alternating and thus have a planar embedding with three
  $X$-blocks covering the $X$-labels. This embedding can be extended to an 
  embedding in the torus by adding a single handle; 
  see Fig.~\ref{fg-3-alt} where the $X$-blocks are shown as thick intervals on the boundary of the planar part 
  (and $Y$-blocks are shown by thick broken line).
\end{proof}

\bibliographystyle{abbrv}
\bibliography{bibliography}
\end{document}